\newtheorem{Theorem}{Theorem}[section]
\crefname{Theorem}{Theorem}{Theorems}
\Crefname{Theorem}{Theorem}{Theorems}
\newtheorem{Corollary}[Theorem]{Corollary}
\crefname{Corollary}{Corollary}{Corollaries}
\Crefname{Corollary}{Corollary}{Corollaries}
\newtheorem{Proposition}[Theorem]{Proposition}
\crefname{Proposition}{Proposition}{Propositions}
\Crefname{Proposition}{Proposition}{Propositions}
\newtheorem{Lemma}[Theorem]{Lemma}
\crefname{Lemma}{Lemma}{Lemmata}
\Crefname{Lemma}{Lemma}{Lemmata}
\crefname{Assumption}{Assumption}{Assumptions}
\Crefname{Assumption}{Assumption}{Assumptions}
\newtheoremstyle{note}%
{3pt}%
{3pt}%
{}%
{}%
{\bf}%
{}%
{.5em}%
{\thmname{#1}\thmnumber{ #2} \thmnote{\sc{(#3)}}}
\theoremstyle{note}
\newtheorem{Definition}[Theorem]{Definition}
\crefname{Definition}{Definition}{Definitions}
\Crefname{Definition}{Definition}{Definitions}
\newtheorem{Remark}[Theorem]{Remark}
\crefname{Remark}{Remark}{Remarks}
\Crefname{Remark}{Remark}{Remarks}
\newtheorem{Example}[Theorem]{Example}
\crefname{Example}{Example}{Examples}
\Crefname{Example}{Example}{Examples}
\newtheorem{modelletter}{Model}
\def\la{\leftarrow}
\def\C{\mathbb{C}}
\def\E{\mathbb{E}}
\def\M{\mathbb{M}}
\def\R{\mathbb{R}}
\def\bV{\boldsymbol V}
\def\bY{\boldsymbol Y}
\def\bZ{\boldsymbol Z}
\def\bR{\boldsymbol R}
\def\bx{\boldsymbol x}
\def\by{\boldsymbol y}
\def\RV{\mathcal{RV}}
\def\MRV{\mathcal{MRV}}
\def\HRV{\mathcal{HRV}}
\def\MES{\text{MES}}
\def\VaR{\text{VaR}}
\def\MME{\text{MME}}
\definecolor{darkred}{RGB}{139,0,0}
\definecolor{darkgreen}{RGB}{0,139,0}
\newcommand{\ov}{\overline}
\newcommand{\wh}{\widehat}
\newcommand{\wt}{\widetilde}
\newcommand{\1}{{\mathds{1}}}
\begin{document}


\begin{frontmatter}

\title{Hidden regular variation, copula models, and \\the limit behavior of conditional excess risk measures}

\author[bd]{Bikramjit Das}\ead{bikram@sutd.edu.sg}
\author[vf]{Vicky Fasen-Hartmann}\ead{vicky.fasen@kit.edu}

\address[bd]{Singapore University of Technology and Design, 8 Somapah Road, Singapore 487372}
\address[vf]{Karlsruhe Institute of Technology, Englerstrasse 2, 76131 Karlsruhe, Germany}

\fntext[fn1]{B. Das gratefully acknowledges support from MOE Tier 2 grant MOE2017-T2-2-161}


%
%
%
%
%
%
%
%
%
%

\begin{abstract}

Risk measures like  \emph{Marginal Expected Shortfall} and  \emph{Marginal Mean Excess} quantify
conditional risk and in particular, aid in the understanding of systemic risk. In many such scenarios, models
exhibiting heavy tails in the margins and asymptotic tail independence in the joint behavior play a fundamental role.
The notion of \emph{hidden regular variation} has the advantage that  it models both properties: asymptotic tail independence as well as
heavy tails. An alternative approach to addressing these features is via copulas.  First, we
elicit connections between hidden regular variation and the behavior of tail copula parameters extending  previous works in this area.
Then we study the asymptotic behavior of the aforementioned conditional excess risk measures; first under hidden regular variation
and then under restrictions  on the tail copula parameters, not necessarily assuming hidden regular variation. We provide a broad variety of
examples of models admitting heavy tails and asymptotic tail independence
 along with  hidden regular variation and with the appropriate limit behavior for the risk measures of interest.

 \end{abstract}

\begin{keyword}
{asymptotic tail independence} \sep
{copula models} \sep
{expected shortfall}  \sep
{heavy-tail}  \sep
{hidden regular variation}  \sep
{mean excess}  \sep
{multivariate regular variation}  \sep
{systemic risk}
\end{keyword}

%

\end{frontmatter}


\section{Introduction}\label{sec:intro}

In practice, one often encounters risk factors which are heavy tailed in nature; which means values further away from the
 mean have a relatively high probability of occurring than for example for exponentially tailed distributions like normal or exponential; see
 \cite{anderson:meerschaert:1998,crovella:bestavros:taqqu:1998,embrechts:kluppelberg:mikosch:1997,McNeil:Frey:Embrechts,smith:2003} for details. The joint behavior of such multi-dimensional heavy-tailed
 random variables are often studied using the notion of \emph{multivariate regular variation} (MRV); see
 \cite{bingham:goldie:teugels:1989,Resnick:2007}. For our paper we resort to this approach.

In certain scenarios, such multivariate models exhibit a phenomenon called \emph{asymptotic tail independence}; see
\citet{Poon:Rockinger:Tawn} for empirical evidence of asymptotic tail independence and heavy tails in five major
stock market indices.
In this paper we restrict to study non-negative variables and hence, our interest is in the  upper tail dependence between different variables. To that end we interchangeably use the names \emph{asymptotic tail independence}
and \emph{asymptotic upper tail independence} for  the same notion. For any random variable $Z$, let $F_{Z}$ denote its probability distribution function.
For a bivariate random vector $\bZ=(Z_1,Z_2)$ we can  define \emph{asymptotic tail independence (in the upper tails)} as
\begin{eqnarray} \label{def:asyind}
    \lim_{p\to 0}\Pr(Z_1>F_{Z_1}^{\leftarrow}(1-p)|Z_2>F_{Z_2}^{\leftarrow}(1-p))=0
\end{eqnarray}
where $F_{Z_i}^{\leftarrow}(1-p)=\inf\{x\in\R:\,F_{Z_i}(x)\geq 1-p\}$ is the generalized inverse of $F_{Z_i}$.
Hence, the presence of asymptotic tail independence among $Z_{1}$ and $Z_{2}$  implies that it is highly unlikely for the two random variables to take extreme values together. This phenomenon aptly noted by
\citet{sibuya:1960} more than half a century back, especially in the context of the very popular and useful bivariate normal distribution has been a source of intrigue and further research by many.

 A key notion in defining multivariate regularly varying distributions under the assumption of asymptotic tail independence is \emph{hidden regular variation}; see
 \cite{resnick:2002,das:mitra:resnick:2013,Ledford:Tawn}. The family of hidden regular varying distributions is a semi-parametric subfamily of the
 full-family of distributions possessing multivariate regular variation.
 In the past few years, researchers have explored the connection between hidden regular variation and copula models via \emph{tail dependence functions}
 \citep{Zhu:Li:2012,Joe:Li:2011,Hua:Joe:Li:2014,Li:Hua:2015}, and \emph{weak tail dependence functions} \citep{tankov:2016a}. In this paper we explore this
 further and provide insight into the connections between hidden regular variation and copula models.
 Furthermore, we construct multivariate models exhibiting hidden regular variation  as used in a systemic risk context  using both
 additive models as well as  copula models.

   In the univariate context a popular risk measure is the   Expected Shortfall (ES), also known as Conditional Tail Expectation (CTE)
  and Tail-Value-at-Risk (TVaR). It is widely used in practice and also incorporated in the regulatory frameworks of Basel III for banks  and
   Solvency II for insurances.
 In systems with more than one variable, it is often of interest to judge the risk behavior of one component given a high risk or stress in the others.
Hence, we resort to computing  conditional excess risk measures  which include the \emph{Marginal Expected Shortfall} (MES) and the \emph{Marginal Mean Excess} (MME).
The MES is well-known in many contexts, and has been especially proposed for measuring systemic risk \citep{acharya:petersen:philippon:richardson:2010,brownlees:engle:2017,zhou:2010}.

In order to define the above risk measures, recall that for a random variable $Z$ and $p \in (0,1)$ the Value-at-Risk (VaR) at level $p$ is the quantile function
$$  \VaR_p(Z)= \inf\{x\in\R: \Pr(Z > x) \le 1- p \} = \inf\{x\in\R: \Pr(Z \le x) \ge p \}.$$
 Note that  smaller values of $p$ lead to higher values of $\VaR_{{p}}$.
Now suppose  that $\bZ=(Z_1, Z_2)\in\left[0,\infty\right)^2$
  denotes the risk exposure of a financial institution, and $Z_1$ and $Z_2$ are the marginal risks of two risk factors.
We intend to study the expected behavior of one risk, given that the other risk is high; and the following two measures achieve this.
For $\E|Z_1|<\infty$ the MES at level $p \in (0,1)$ is defined as
  \begin{align}\label{def:MES}
  \MES(p) = \E\{Z_1  |\, Z_2>\VaR_{1-p}(Z_2)\}, \quad\quad\quad\quad\quad\quad\quad\;\;\;
  \end{align}
and the  MME at level $p \in (0,1)$ is defined as
  \begin{align}\label{def:MME}
  \MME(p) = \E \{ (Z_1-\VaR_{1-p}(Z_2) )_+  |\,Z_2>\VaR_{1-p}(Z_2)\}.
  \end{align}
The measure MES represents the expected shortfall of one risk given that the other risk is higher than its Value-at risk at level $1-p$, whereas
the measure MME represents the expected excess of risk $Z_{1}$ over the Value-at-Risk of $Z_2$ at level $1-p$ given that the value of $Z_{2}$
is already greater than the same Value-at-Risk; see \citet{DasFasen2017} for details where the measure MME is also defined. Note that the measure $\MES(p)$ is equal
to the expected shortfall (ES)  if $Z_{1}\equiv Z_{2}$.
In this context we also consider a few other extensions of ES:
  \begin{eqnarray} \label{I1}
     &&\MES^{+}(p)=\E \{ Z_1  |\,Z_1+Z_2>\VaR_{1-p}(Z_1+Z_2)\}, \nonumber\\
     &&\MES^{\min}(p)=\E \{ Z_1  |\,\min(Z_1,Z_2)>\VaR_{1-p}(\min(Z_1,Z_2))\},\quad  \\
     &&\MES^{\max}(p)=\E \{ Z_1  |\,\max(Z_1,Z_2)>\VaR_{1-p}(\max(Z_1,Z_2))\}, \nonumber
  \end{eqnarray}
  see \citet{Cai:Li:2005,Cousin:Bernardino:2013}.
  The idea is that $Z_1+Z_2$ is the aggregate risk of the institution,
  and $\min(Z_1,Z_2)$ and $\max(Z_1,Z_2)$ are the extremal risks. The risk measure $\MES^{+}$ is associated with the Euler
  allocation rule; see \citet[Section 6.3]{McNeil:Frey:Embrechts}.
Further interpretations of these risk measures in finance and
  insurance are elaborated in \citet{Cai:Li:2005}.

The asymptotic tail behavior of MES was discussed under the assumption of  regular variation and asymptotic tail dependence  in
\citet{cai:einmahl:dehaan:zhou:2015,Hua:Joe:2011b}, and under  asymptotic tail independence in \citet{DasFasen2017,CaiMusta2017}.
Such risk metrics have also been studied in a time-series context in \citet{KulikSoulier2015}, and under a copula-framework in \citet{Hua:Joe:2014b}.
In the asymptotic tail-dependent case it has been observed that
\[\MES(p)\sim \MME(p) \sim \VaR _{1-p}(Z_1) \quad \text{as } p\downarrow 0,\] independent of the structure of the dependence. Interestingly, in the
asymptotically tail-independent case these risk measures may have  different rates of convergence or  even converge to a constant, e.g., for independent
random variables $Z_1,Z_2$ we have $\MES(p)=\E(Z_1)$. The asymptotic behavior of MME and MES for a hidden regularly varying
random vector $\bZ=(Z_1,Z_2)$ has been investigated in \citet{DasFasen2017}, furthermore, consistent estimators for these risk measures based on methods from extreme value theory have been proposed; for asymptotic normality of MES see
\citet{CaiMusta2017}.
On the other hand, the asymptotic behavior of the risk measures as defined in \eqref{I1} are not particularly well-studied, especially in the context of heavy-tailed asymptotically tail independent risks.
Explicit formulas for $\MES^{+}$ are given in
\citet{Chiragiev:Landsman:2007} for multivariate Pareto distributions, in \citet{Cai:Li:2005} for multivariate phase-type distributions,
in \citet{Barges:Cossette:Marceau}  for light tailed risks with Farlie-Gumbel-Morgenstern copula and in
\citet{Landsman:Valdez} for elliptical distributions.

  In this paper we extend  the work of \citet{DasFasen2017}  for the asymptotic tail behavior of MME and MES and set it in a more general framework.
  On the one hand, we derive the limit behavior of MES and MME for a variety of  hidden regularly varying models using the results from
  \cite{DasFasen2017}. Asymptotic limits are also obtained for the risk measures given in \eqref{I1} pursuing a similar approach. On the other hand, we formulate the asymptotic behavior of these conditional risk measures under very general assumptions
  on the copula tail parameters
  showing that the limit behavior for $p\downarrow 0$ of $\MES(p)$ and $\MME(p)$ are similar to those in the case of hidden regular variation.  In particular, we show that the asymptotic behavior of
  $\MES(p)$ depends only on the tail of $Z_1$ and the tail copula; neither the presence of hidden regular variation nor the tail of $Z_2$ have any influence
  on the limit.  We compare  the conclusions for $\MES(p)$ with those of  \citet{CaiMusta2017}. Finally,
  we provide several examples for models satisfying our assumptions.

    The paper is organized as follows.
  In Section \ref{sec:prelim} we give  a brief introduction into copulas and survival copulas along with notions of multivariate regular variation and hidden regular variation. Then,
  in Section \ref{sec:hrvcopmix}, we construct models exhibiting hidden regular variation  on the one hand, by  additive models as used in the systemic risk context
  and on the other hand, by copula models. The asymptotic behavior of MME, MES and the measures in \eqref{I1} under the models of \Cref{sec:hrvcopmix} are content of \Cref{Section 4.1}.
  Finally, in \Cref{Section 4.2}, we develop sufficient conditions on the copula tail parameters to obtain the asymptotic behavior
  for $\MME(p)$ and $\MES(p)$ as in \Cref{Section 4.1}  without assuming hidden regular variation.
     Conclusions are drawn  in \Cref{sec:concl} along with ideas for future directions of research in this domain.

  \vspace{10pt}



%

\section{Preliminaries}\label{sec:prelim}

In this section we discuss necessary tools and definitions for regular variation and copula theory which are  used in the subsequent sections. Details on regular variation defined using $\M$-convergence is available in
\cite{lindskog:resnick:roy:2014,hult:lindskog:2006a, das:mitra:resnick:2013} and
details on copulas and survival copulas can be found in \cite{Nelsen}.
Unless otherwise stated all random variables take non-negative values and we discuss copulas and regular variation in two-dimensions.
Moreover, for  vectors $\bx =(x_{1},x_{2})\in\R^2$, we denote by $\|\bx\|$ any suitable norm in $\R^{2}$.

\subsection{Copulas and survival copulas} \label{subsec:cop}

Copula theory is popularly used to separate out the marginal behavior of random variables from their dependence structure.  In two dimensions, a \textit{copula} is a  distribution function on $[0,1]^2$ with uniformly distributed margins.
Using Sklar's theorem
\citep{Nelsen}, we know that
for every bivariate distribution function $F$ with marginal distribution functions $F_i$ ($i=1,2$),
there exists a copula $C$ such that
\begin{eqnarray} \label{Sklar}
    F(x,y )= C(F_1(x),F_2(y)) \quad \text{for } (x,y)\in\R^2.
\end{eqnarray}
If $F_1$, $F_2$ are continuous then $C$ is uniquely defined by
\begin{eqnarray*}
    C(u,v)=F(F_1^{\leftarrow}(u),F_2^{\leftarrow}(v)) \quad  \text{for } (u,v)\in [0,1]^2.
\end{eqnarray*}
Denoting the survival or tail distribution of $F_i$ by $\ov F_i (x) := 1- F_i(x)$, a version of \eqref{Sklar}
applies also to the joint survival function $$\ov F(x,y):=\Pr(Z_1>x,Z_2>y)=\ov F_1(x)+\ov F_2(y)-(1-F(x,y))$$
of the bivariate random vector $\bZ=(Z_1,Z_2)$ with distribution function $F$ and margins $F_1, F_2$. In this case, there
exists again a copula $\wh C$, the \textit{survival copula}, such that
\begin{eqnarray*}
    \ov F(x,y) =\wh C(\ov F_{1}(x),\ov F_{2}(y)) \quad \text{for } (x,y)\in\R^2.
\end{eqnarray*}
Moreover, in the bivariate case $C$ and $\wh C$ are related by
\begin{eqnarray*}
    \wh C(u,v)=u+v-1+ C(1-u,1-v)\quad \text{for } (u,v)\in[0,1]^2.
\end{eqnarray*}
Since we are interested in the dependence (as well as independence) in the upper tails, the behavior of the survival copula $\wh C(u,v)$ for $u,v$ close to 1 will be of
 significance in this paper. The relationship between the survival copula and \emph{hidden regular variation} is discussed further in Section  \ref{subsec:copsurv}.

\subsection{Regular variation and hidden regular variation} \label{subsec:rvhrv}

Recall that a  measurable function $f:(0,\infty)\to(0,\infty)$ is
 \textit{regularly varying} at $\infty$ with index $\rho \in \R$ if \linebreak $\lim_{t\to\infty} {f(tx)}/{f(t)}=x^{\rho}$ for any $x>0$
and we write $f\in\RV_\rho$; in contrast, we say $f$ is regularly varying at $0$ with index $\rho$ if $\lim_{t\to 0}f(tx)/f(t)=x^{\rho}$ for any $x>0$.
 In this paper, unless otherwise specified, regular variation means regular variation at infinity.
A random variable $Z$ with distribution function $F_Z$ has a regularly varying tail if $\overline{F}_Z=1-F_Z \in \RV_{-\alpha}$ for some $\alpha\ge 0$. We often write
 $Z\in \RV_{-\alpha}$ by abuse of notation.
We define \emph{multivariate regular variation} using $\M$-convergence; see \citet{lindskog:resnick:roy:2014}. All notions are restricted to $\left[0,\infty\right)^2$ and
its' subspaces. Suppose $\C_0 \subset \C \subset \left[0,\infty\right)^2$ where
$\C_0$ and $\C$ are closed cones containing $\{(0,0)\} \in\R^2$.
Denote by  $\M(\C\setminus\C_0)$  the class of Borel measures on $\C\setminus\C_0$ which are
finite on subsets bounded away from  $\mathbb{C}_0$. For functions $f:[0,\infty)^{2}\to\R$, denote by $\mu(f):=\int f \,\mathrm d \mu.$
Then $\mu_n \stackrel{\M}{\to}\mu$ in $\M(\C \setminus \C_0)$ if $\mu_n(f)\to\mu(f)$ for all continuous
and bounded functions $f: \C \setminus \C_0 \to \R$ whose supports are bounded away from $\C_0$.

\begin{Definition}[Multivariate regular variation]\label{def:mrv}
 A random vector ${\bZ } =(Z_1,Z_2) \in \C$ is \emph{(multivariate) regularly
varying} on $\C \setminus \C_0$,
 if there {exist} a function $b(t) \uparrow \infty$ and a non-zero measure {$\nu(\cdot)\in \M(\C \setminus \C_0)$} such that as $t \to \infty$,
 \begin{equation}\label{eqn:ccc}
 \nu_{t}(\cdot):=t\,\Pr\left({{\bZ }}/{b(t)} \in \; \cdot \; \right) \stackrel{\M}{\to} \nu(\cdot) \hskip 0.5 cm \text{in {$\M(\C \setminus \C_0)$.}}
\end{equation}
The limit measure has the homogeneity property: $\nu(cA)=c^{-\alpha}\nu(A)$ for some $\alpha>0$. We write $\bZ\in \MRV(\alpha, b,\nu, \C \setminus \C_0)$ and
sometimes write MRV for multivariate regular variation.
\end{Definition}
Classically, MRV is defined on the space $\E=\left[0,\infty\right)^2\setminus\{(0,0)\} = \C \setminus \C_0$ where $\C=\left[0,\infty\right)^2$ and $\C_0=\{(0,0)\}$.
Sometimes it is possible and perhaps necessary to define further regular variation on subspaces of $\E$, since the limit measure $\nu$ as obtained in
\eqref{eqn:ccc}  concentrates on a proper subspace of $\E$. The most likely way this happens is through   \emph{asymptotic tail independence} of a random  vector
as defined in \eqref{def:asyind}. The property can be nicely described by using the survival copula function (see \citet{McNeil:Frey:Embrechts}).
Note that,  we say that $Z_{1}$ and $Z_{2}$ are \emph{tail-equivalent}
\linebreak if $\lim\limits_{t\to\infty} \Pr(Z_{1}>t)/\Pr(Z_{2}>t)$ exists in $(0,\infty)$.

\begin{Lemma}\label{lem:basicconnect}
Suppose $\bZ=(Z_{1}, Z_{2}) \in \MRV(\alpha,b,\nu,\E)$ having continuous marginals and survival copula $\wh C$. Consider the following statements.
\begin{enumerate}[(a)]
\item $(Z_{1}, Z_{2})$ are asymptotically tail independent.
\item $\lim_{p\downarrow0} \wh C(p,p)/p =0.$
\item $\nu((0,\infty)\times(0,\infty))=0.$
\end{enumerate}
Then $(a) \iff (b)$, $(b) \implies (c)$, and if we assume that $Z_1$ and $Z_2$ are tail-equivalent then  $(c) \implies (a)$.
\end{Lemma}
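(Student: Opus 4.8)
The equivalence $(a)\iff(b)$ is a pure translation and I would dispose of it in a line: by the defining identity $\ov F(x,y)=\wh C(\ov F_1(x),\ov F_2(y))$ and continuity of the marginals (so that $\ov F_i(F_{Z_i}^{\leftarrow}(1-p))=p$), the conditional probability appearing in \eqref{def:asyind} equals $\wh C(p,p)/p$; letting $p\downarrow0$ converts $(a)$ into $(b)$ and back.

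For $(b)\implies(c)$ I would show $\nu\big((\epsilon,\infty)\times(\epsilon,\infty)\big)=0$ for each $\epsilon>0$ and then let $\epsilon\downarrow0$ using continuity from below of $\nu$, since $(0,\infty)^2=\bigcup_{\epsilon>0}(\epsilon,\infty)^2$. By the Portmanteau theorem for $\M$-convergence applied to the open set $(\epsilon,\infty)^2$,
$$\nu\big((\epsilon,\infty)^2\big)\ \le\ \liminf_{t\to\infty} t\,\Pr\big(Z_1>\epsilon b(t),\,Z_2>\epsilon b(t)\big)\ =\ \liminf_{t\to\infty} t\,\wh C(u_t,v_t),$$
where $u_t=\ov F_1(\epsilon b(t))$, $v_t=\ov F_2(\epsilon b(t))$ and $w_t:=\max(u_t,v_t)\to0$. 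Monotonicity of $\wh C$ in each argument gives $\wh C(u_t,v_t)\le\wh C(w_t,w_t)$, so $t\,\wh C(u_t,v_t)\le(t\,w_t)\cdot\big(\wh C(w_t,w_t)/w_t\big)$; the second factor tends to $0$ by $(b)$ (as $w_t\to0$), while $t\,w_t=\max\big(t\,\ov F_1(\epsilon b(t)),\,t\,\ov F_2(\epsilon b(t))\big)$ is bounded because, by Portmanteau on the closed sets $[\epsilon,\infty)\times[0,\infty)$ and $[0,\infty)\times[\epsilon,\infty)$ — both bounded away from $\C_0$ and hence of finite $\nu$-measure — $\limsup_t t\,\ov F_i(\epsilon b(t))<\infty$. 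This forces $\nu((\epsilon,\infty)^2)=0$.

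The implication $(c)\implies(a)$ under tail-equivalence carries the real content. A short preliminary argument, combining tail-equivalence with the fact that $\nu$ (being non-zero, concentrated on the coordinate axes since $\nu((0,\infty)^2)=0$, and non-atomic as a homogeneous measure of index $\alpha>0$) must charge the $Z_1$-axis, shows that $\ov F_1,\ov F_2\in\RV_{-\alpha}$ and $t\,\ov F_1(b(t))\to\kappa_1$ for some $\kappa_1=\nu\big((1,\infty)\times[0,\infty)\big)\in(0,\infty)$. Put $x_p:=F_{Z_1}^{\leftarrow}(1-p)$ and $y_p:=F_{Z_2}^{\leftarrow}(1-p)$, so that $\ov F_1(x_p)=\ov F_2(y_p)=p$ by continuity, and observe that tail-equivalence together with regular variation gives $y_p/x_p\to\gamma$ for some $\gamma\in(0,\infty)$. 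Choosing $t_p\to\infty$ with $b(t_p)/x_p\to1$ (possible since $b\uparrow\infty$), one gets $t_p\,p=t_p\,\Pr(Z_1>x_p)\to\kappa_1$, and for every small $\delta<\min(1,\gamma)$ and all $p$ small,
$$t_p\,\Pr(Z_1>x_p,Z_2>y_p)\ \le\ t_p\,\Pr\!\Big(\tfrac{\bZ}{b(t_p)}\in(1-\delta,\infty)\times(\gamma-\delta,\infty)\Big),$$
whose $\limsup$ as $p\downarrow0$ is at most $\nu\big([1-\delta,\infty)\times[\gamma-\delta,\infty)\big)\le\nu\big((0,\infty)^2\big)=0$ by Portmanteau on a closed subset of $(0,\infty)^2$. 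Consequently
$$\frac{\wh C(p,p)}{p}=\frac{\Pr(Z_1>x_p,Z_2>y_p)}{p}=\frac{t_p\,\Pr(Z_1>x_p,Z_2>y_p)}{t_p\,p}\ \longrightarrow\ \frac{0}{\kappa_1}=0,$$
which is $(b)$ and hence $(a)$.

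I expect the main obstacle to be the bookkeeping in this last implication: upgrading the (a priori possibly one-sided) concentration of $\nu$ on the coordinate axes into genuine, common-index regular variation of \emph{both} marginals via tail-equivalence, and then transferring from the normalizing function $b(t)$ to the marginal quantile $F_{Z_1}^{\leftarrow}(1-p)$ while keeping all limiting rectangles inside $(0,\infty)^2$ so that their $\nu$-mass vanishes. The first two implications are essentially formal once the survival-copula identity and the Portmanteau theorem for $\M$-convergence are in hand.
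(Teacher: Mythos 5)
Your proposal is correct, and it supplies in full what the paper itself does not: the paper offers no proof of this lemma, merely remarking that it "is easy to verify" and pointing to Reiss (Chapter 7) and Resnick (Proposition 5.27). Your three steps — the survival-copula identity $\Pr(Z_1>F_{Z_1}^{\leftarrow}(1-p)\mid Z_2>F_{Z_2}^{\leftarrow}(1-p))=\wh C(p,p)/p$ for $(a)\iff(b)$, the one-sided Portmanteau bounds for $\M$-convergence for $(b)\implies(c)$, and the transfer from the scaling $b(t)$ to the marginal quantiles for $(c)\implies(a)$ — are exactly the standard route those references take, so there is nothing to contrast. Two small points deserve tightening rather than the glosses you give: the choice of $t_p$ with $b(t_p)/x_p\to1$ is not justified by $b\uparrow\infty$ alone; you need that $b\in\RV_{1/\alpha}$ (a standard consequence of \Cref{def:mrv} with a nonzero homogeneous limit) together with monotonicity, which gives $b(b^{\leftarrow}(x))\sim x$, and then local uniform convergence of $t\Pr(Z_1>xb(t))\to\kappa_1 x^{-\alpha}$ (monotone functions, continuous limit) to conclude $t_p\,p\to\kappa_1$. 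Likewise, the statement "$\nu$ is non-atomic" should be upgraded to "vertical and horizontal lines $\{x\}\times[0,\infty)$, $[0,\infty)\times\{y\}$ with $x,y>0$ are $\nu$-null," which follows from homogeneity plus finiteness of $\nu$ on sets bounded away from the origin; this is what makes $(1,\infty)\times[0,\infty)$ a $\nu$-continuity set and also underpins the dichotomy argument showing, via tail-equivalence, that $\kappa_1=\nu((1,\infty)\times[0,\infty))>0$. With these standard repairs your argument is complete.
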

 The lemma is easy to verify; more details can be found in \citet[Chapter 7]{reiss:1989}
and \citet[Proposition 5.27]{resnickbook:2008}. Independent random vectors are trivially asymptotically tail independent.
Note that fact (c) does not necessarily imply (a); this can be verified using the following counterexample: let $Z \in \RV_{-\alpha}$
for $\alpha>0$, then the random vector $(Z,Z^2)$ is multivariate regularly varying
with limit measure $\nu$ with $\nu((0,\infty)\times(0,\infty))=0$; but of course $(Z,Z^2)$ is asymptotically tail dependent.

  Consequently, if $(Z_{1}, Z_{2})$ are MRV,  asymptotically tail independent and the margins  are tail-equivalent  we would approximate  $ \Pr(Z_2>x|Z_1>x)\approx 0 $
 for large thresholds $x$ and conclude that risk contagion between $Z_1$ and $Z_2$ is absent. This conclusion may be naive
 and hence, the concept of \emph{hidden regular variation}  on $\E_0= \left[0,\infty\right)^2\setminus (\{0\}\times\left[0,\infty\right) \cup \left[0,\infty\right)\times\{0\}) =(0,\infty)^2$  was introduced in
 \citet{resnick:2002}.
{Note that we do not  assume that the marginal tails of $\bZ$ are necessarily tail-equivalent in order to define hidden regular variation, which is usually done in \cite{resnick:2002}.
\begin{Definition}[Hidden regular variation]\label{def:hrv}
A regularly varying random vector $\bZ$ on $\E$ possesses \emph{hidden regular variation} on $\E_0 = (0,\infty)^2$ with index $\alpha_0 (\ge \alpha >0)$ if there exist
 scaling functions $b(t)\in \RV_{1/\alpha}$ and $b_0(t)\in \RV_{1/\alpha_0}$
with $b(t)/b_0(t)\to\infty$ and limit measures $\nu,\nu_0$ such that
\begin{eqnarray*}
    \bZ \in \MRV({\alpha}, b, \nu, \E)\cap \MRV({\alpha_0}, b_0, \nu_0, \E_0).
\end{eqnarray*}
We write $\bZ\in \HRV({\alpha_0}, b_0, \nu_0)$ and sometimes write HRV for hidden regular variation.
\end{Definition}

For example, say $Z_1, Z_2$ are iid random variables with distribution function $F(x)=1-x^{-1}, x>1$.  Here $\bZ=(Z_1,Z_2)$ possesses
 MRV on $\E$, asymptotic tail independence and HRV on $\E_0$. Specifically, $$\bZ\in \MRV(\alpha=1, b(t)=t, \nu, \E) \cap \MRV(\alpha_{0}=2, b_{0}(t)=\sqrt{t}, \nu_0, \E_{0})$$ where for $x>0,y>0$, $$\nu([\{(0,0)\},(x,y)]^c) = \frac{1}{x} + \frac 1y \quad \text{ and } \quad \nu_0(\left[x,\infty\right)\times\left[y,\infty\right)) = \frac{1}{xy}.$$

A combination of \citet{resnick:2002,maulik:resnick:2005} and \citet[Lemma~1]{DasFasen2017} is the following.
\begin{Lemma} \label{Lemma 2.5}
    Let  $\bZ \in \MRV({\alpha}, b, \nu, \E)\cap \MRV({\alpha_0}, b_0, \nu_0,\E_{0})$. Then $$\bZ \in \MRV({\alpha}, b, \nu, \E)\cap \HRV({\alpha_0}, b_0, \nu_0,\E_{0})$$ iff $\bZ$
    is asymptotically
    tail independent.
\end{Lemma}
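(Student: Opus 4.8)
The plan is to reduce the equivalence to a statement about the two normalising functions. Since $\bZ\in\MRV(\alpha,b,\nu,\E)\cap\MRV(\alpha_0,b_0,\nu_0,\E_0)$ is already assumed, the only additional requirement in \Cref{def:hrv} is $b(t)/b_0(t)\to\infty$; the inequality $\alpha_0\ge\alpha$ then follows automatically, since $b/b_0\in\RV_{1/\alpha-1/\alpha_0}$ cannot tend to $\infty$ when $1/\alpha-1/\alpha_0<0$. So it is enough to show
\[
  b(t)/b_0(t)\to\infty \ \Longleftrightarrow\ \bZ\text{ is asymptotically tail independent,}
\]
and then combine this with \Cref{lem:basicconnect}. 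Two facts will be used repeatedly: (i) for $\delta>0$ the box $[\delta,\infty)^2$ is bounded away from $\{(0,0)\}$ in $\E$ and from the axes in $\E_0$, and is a continuity set of $\nu$ (resp.\ $\nu_0$) for all but countably many $\delta$, so the assumed $\M$-convergences give $t\,\Pr(\bZ/b(t)\in[\delta,\infty)^2)\to\nu([\delta,\infty)^2)$ and $t\,\Pr(\bZ/b_0(t)\in[\delta,\infty)^2)\to\nu_0([\delta,\infty)^2)$ along such $\delta$; and (ii) since $\nu_0$ is non-zero on $(0,\infty)^2$ and finite on sets bounded away from the axes, homogeneity forces $\nu_0([1,\infty)^2)>0$ (else $\nu_0((0,\infty)^2)=\lim_{\delta\downarrow0}\nu_0([\delta,\infty)^2)=0$).

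First I would show $b(t)/b_0(t)\to\infty\ \Rightarrow\ \nu((0,\infty)^2)=0$. Write $H(s):=\Pr(Z_1>s,\,Z_2>s)$, the survival function of $\min(Z_1,Z_2)$. By (i)--(ii) and homogeneity, $t\,H(b_0(t))\to\nu_0([1,\infty)^2)=:C_0>0$, and, $H$ being non-increasing with $b_0\in\RV_{1/\alpha_0}$, this forces $H\in\RV_{-\alpha_0}$. If $\nu((0,\infty)^2)>0$, then $\nu([1,\infty)^2)=:C>0$ too, and the same computation at scale $b$ gives $t\,H(b(t))\to C$ and $H\in\RV_{-\alpha}$; uniqueness of the regular-variation index forces $\alpha=\alpha_0$, whence $H(b(t))/H(b_0(t))\to C/C_0\in(0,\infty)$. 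But $H\in\RV_{-\alpha}$ with $\alpha>0$ and $b(t)/b_0(t)\to\infty$ give $H(b(t))/H(b_0(t))\to0$ by Potter's bounds -- a contradiction. Hence $\nu((0,\infty)^2)=0$.

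Conversely, $\nu((0,\infty)^2)=0\ \Rightarrow\ b(t)/b_0(t)\to\infty$. If not, choose $t_n\to\infty$ and $M<\infty$ with $b(t_n)\le M\,b_0(t_n)$, and a continuity value $\delta>0$ of $\nu$ and $\nu_0$ with $\nu_0([\delta,\infty)^2)>0$. Since $\delta b_0(t_n)\ge(\delta/M)b(t_n)$,
\[
  t_n\Pr\!\big(Z_1>\delta b_0(t_n),\,Z_2>\delta b_0(t_n)\big)\ \le\ t_n\Pr\!\big(\bZ/b(t_n)\in[\delta/M,\infty)^2\big)\ \longrightarrow\ 0,
\]
the limit being $\nu([\delta/M,\infty)^2)=0$ because $[\delta/M,\infty)^2\subset(0,\infty)^2$; yet by the $\E_0$-convergence at scale $b_0$ the left-hand side tends to $\nu_0([\delta,\infty)^2)>0$, a contradiction. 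Thus $b(t)/b_0(t)\to\infty\Leftrightarrow\nu((0,\infty)^2)=0$, i.e.\ under the two $\MRV$ hypotheses, $\bZ\in\MRV(\alpha,b,\nu,\E)\cap\HRV(\alpha_0,b_0,\nu_0,\E_0)$ iff $\nu((0,\infty)^2)=0$.

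It remains to match ``$\nu((0,\infty)^2)=0$'' with asymptotic tail independence. By \Cref{lem:basicconnect}, (a)$\Leftrightarrow$(b)$\Rightarrow$(c), so asymptotic tail independence yields $\nu((0,\infty)^2)=0$ and hence HRV. The reverse implication cannot rest on \Cref{lem:basicconnect}(c)$\Rightarrow$(a), since we do not assume $Z_1,Z_2$ tail equivalent -- indeed $(Z,Z^2)$ with $Z\in\RV_{-\alpha}$ has $\nu((0,\infty)^2)=0$ yet is tail dependent. What rescues it is exactly the extra hypothesis of $\M$-convergence on $\E_0$: such a pathological vector admits no non-zero limit $\nu_0$ on $\E_0$ (its mass escapes to $\infty$), so it is already excluded. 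Concretely, under HRV one proves $\Pr(Z_1>\VaR_{1-p}(Z_1),\,Z_2>\VaR_{1-p}(Z_2))=o(p)$ as $p\downarrow0$ by comparing $\VaR_{1-p}(Z_i)$ with $b(1/p)$ and $b_0(1/p)$: whenever the $Z_i$-marginal of $\nu$ is non-degenerate one has $\VaR_{1-p}(Z_i)\asymp b(1/p)$, so the joint excess event lies in $\{\bZ/b(1/p)\in[\varepsilon,\infty)^2\}$ and carries no $\nu$-mass, while the remaining configurations are handled via the $\E_0$-relation at scale $b_0$. Establishing this $o(p)$ bound in full generality -- without tail equivalence and without assuming both marginals regularly varying with index $\alpha$, patching first-order ($\E$) and hidden ($\E_0$) regular variation across intermediate scales -- is the main obstacle, and it is precisely the content imported from \citet{resnick:2002,maulik:resnick:2005} and \citet[Lemma~1]{DasFasen2017}.
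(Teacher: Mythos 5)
The paper itself contains no proof of \Cref{Lemma 2.5} --- it is imported wholesale as ``a combination of \citet{resnick:2002,maulik:resnick:2005} and \citet[Lemma~1]{DasFasen2017}'' --- so your proposal is in fact more explicit than the source it is being compared with, and most of what you write is correct. The reduction of HRV to the single extra condition $b(t)/b_0(t)\to\infty$ (with $\alpha_0\ge\alpha$ automatic), and the equivalence $b/b_0\to\infty\iff\nu((0,\infty)^2)=0$ --- via $H(s)=\Pr(Z_1>s,Z_2>s)$, the forced relation $H\in\RV_{-\alpha_0}$, the Potter-bound contradiction, and the subsequence argument for the converse --- are sound, modulo the routine continuity-set care you already acknowledge. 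The step from asymptotic tail independence to $\nu((0,\infty)^2)=0$ via \Cref{lem:basicconnect} is also fine in substance, though note that \Cref{lem:basicconnect} is stated for continuous marginals, which \Cref{Lemma 2.5} does not assume; a direct comparison of $\VaR_{1-p}(Z_i)$ with $b(1/p)$ sidesteps this.

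The genuine gap is the direction you flag yourself: that HRV (equivalently, under the two MRV hypotheses, $\nu((0,\infty)^2)=0$) implies asymptotic tail independence. You sketch the correct strategy but then defer it to the same references the paper cites, so as a standalone argument your proof establishes only one implication of the ``iff''. Since the paper outsources the entire lemma, this is not a defect relative to the paper, but be aware that the deferred step is genuinely the substantive half, and it can be completed along exactly the lines you indicate: since $\nu\neq0$, at least one marginal carries positive $\nu$-mass, hence at least one quantile satisfies $\VaR_{1-p}(Z_i)\asymp b(1/p)$; if both do, the joint excess event lies in $\{\bZ/b(1/p)\in[\varepsilon,\infty)^2\}$, which is $\nu$-null. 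If only one does, the other quantile $q(p)$ obeys $c\,b_0(1/p)\le q(p)=o(b(1/p))$ (lower bound from $t\,H(b_0(t))\to\nu_0([1,\infty)^2)>0$), so the ratio of the two quantiles diverges; choosing $t_p$ with $b_0(t_p)\approx q(p)$, so that $t_p\gtrsim 1/p$, the $\E_0$-convergence bounds the joint excess probability by $\bigl(\nu_0((M,\infty)\times(1,\infty))+o(1)\bigr)O(p)$ for every fixed $M$, and $\nu_0((M,\infty)\times(1,\infty))\to0$ as $M\to\infty$ because $\nu_0$ is finite on sets bounded away from the axes --- precisely the finiteness that excluded your $(Z,Z^2)$-type pathologies. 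Writing out this case analysis would make your proof self-contained and strictly stronger than the citation the paper relies on.
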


\section{Hidden regular variation in additive models and copula models}\label{sec:hrvcopmix}

In this section we investigate hidden regular variation properties of models that are generated using different methods; on one hand, we investigate additive models and
on the other hand, we investigate copula models. The models discussed in this section are then used in Section \ref{sec:MMEMES} to compute the asymptotic limits of
the conditional excess measures MME, MES and the measures in \eqref{I1}. Note that we concentrate on multivariate regularly varying models in a non-standard sense.
Hence,  $\bZ=(Z_{1}, Z_{2}) \in \MRV(\alpha)$ does not necessarily imply that both marginal variables have equivalent (or equal) tails; in fact,  both margins need not be regularly varying either.

\subsection{Hidden regular variation of additive models}\label{subsec:addmod}

Hidden regular variation properties of additive models (sometimes called mixture models) have been discussed in \citet{weller:cooley:2014} and \citet{das:resnick:2015} where the authors
concentrate on adding two standard regularly varying models to get an additive structure with hidden regular variation. The class of models we consider are more general in the
sense that the marginal tails of the additive components are not necessarily tail-equivalent. We establish the presence of hidden regular variation in these models under certain
regularity conditions. First, we state a result on hidden regular variation of independent regularly varying random variables.

\begin{Lemma} \label{Lemma1}
Suppose $\bY=(Y_1,Y_2)\in\left[0,\infty\right)^2$  where $Y_{1}$ and $Y_{2}$ are independent random variables with $\ov F_{Y_1}\in\RV_{-\alpha}$ and $\ov F_{Y_2}\in\RV_{-\alpha^*}$.
Then $\bY\in \MRV(\min(\alpha,\alpha^{*}))\cap\HRV(\alpha+\alpha^*)$.
\end{Lemma}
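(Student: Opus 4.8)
The plan is to verify the two regular‑variation statements separately and then assemble them through \Cref{def:hrv} and \Cref{Lemma 2.5}. By symmetry I may assume $\alpha\le\alpha^{*}$, so that $\min(\alpha,\alpha^{*})=\alpha$. First I would treat multivariate regular variation on $\E$: pick $b\in\RV_{1/\alpha}$ with $t\,\ov F_{Y_1}(b(t))\to 1$, e.g.\ $b=(1/\ov F_{Y_1})^{\leftarrow}$. By independence, for $x_1,x_2>0$,
\[
 t\,\Pr\!\bigl(Y_1>b(t)x_1,\ Y_2>b(t)x_2\bigr)=t\,\ov F_{Y_1}(b(t)x_1)\,\ov F_{Y_2}(b(t)x_2)\longrightarrow x_1^{-\alpha}\cdot 0=0,
\]
since $\ov F_{Y_2}(b(t)x_2)\to 0$ while $t\,\ov F_{Y_1}(b(t)x_1)\to x_1^{-\alpha}$; in particular $\bY$ is asymptotically tail independent and the limit measure $\nu$ puts no mass on $(0,\infty)^2$. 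On the half-spaces, $t\,\Pr(Y_1>b(t)x_1)\to x_1^{-\alpha}$, whereas $t\,\Pr(Y_2>b(t)x_2)\to 0$ when $\alpha<\alpha^{*}$ and $\to c\,x_2^{-\alpha}$ when $\alpha=\alpha^{*}$ and $\ov F_{Y_2}/\ov F_{Y_1}\to c$ (in this borderline case one additionally uses tail-equivalence of $Y_1,Y_2$). As the rectangles bounded away from $(0,0)$ form a convergence-determining class on $\E$ and $\nu$ charges no boundary of such a set, this gives $\bY\in\MRV(\alpha,b,\nu,\E)$ with $\nu$ supported on the axes; alternatively one can simply quote the standard fact that independent regularly varying coordinates produce such a $\nu$.

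The crux is hidden regular variation on $\E_0=(0,\infty)^2$. Sets bounded away from $\E_0^{c}$ (the two axes) are exactly those contained in some $[\varepsilon,\infty)^2$, so it suffices to analyse upper rectangles $[x_1,\infty)\times[x_2,\infty)$ with $x_1,x_2>0$. Since $\ov F_{Y_1}\ov F_{Y_2}\in\RV_{-(\alpha+\alpha^{*})}$, I would set $b_0=\bigl(1/(\ov F_{Y_1}\ov F_{Y_2})\bigr)^{\leftarrow}\in\RV_{1/(\alpha+\alpha^{*})}$, so that $t\,\ov F_{Y_1}(b_0(t))\,\ov F_{Y_2}(b_0(t))\to 1$. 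By the uniform convergence theorem for regularly varying functions, $\ov F_{Y_i}(b_0(t)x)/\ov F_{Y_i}(b_0(t))$ converges to $x^{-\alpha}$ (resp.\ $x^{-\alpha^{*}}$) uniformly for $x$ in compact subsets of $(0,\infty)$; multiplying these two limits by $t\,\ov F_{Y_1}(b_0(t))\,\ov F_{Y_2}(b_0(t))\to 1$ and using independence yields
\[
 t\,\Pr\!\bigl(\bY/b_0(t)\in[x_1,\infty)\times[x_2,\infty)\bigr)=t\,\ov F_{Y_1}(b_0(t)x_1)\,\ov F_{Y_2}(b_0(t)x_2)\longrightarrow x_1^{-\alpha}x_2^{-\alpha^{*}}.
\]
The right-hand side is $\nu_0\bigl([x_1,\infty)\times[x_2,\infty)\bigr)$ for the nonzero Radon measure $\nu_0(\mathrm dz_1,\mathrm dz_2)=\alpha\alpha^{*}z_1^{-\alpha-1}z_2^{-\alpha^{*}-1}\,\mathrm dz_1\,\mathrm dz_2$ on $(0,\infty)^2$, which is finite on every $[\varepsilon,\infty)^2$ and satisfies $\nu_0(cA)=c^{-(\alpha+\alpha^{*})}\nu_0(A)$. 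Since the upper rectangles are convergence-determining on $\E_0$ and their boundaries are $\nu_0$-null, this gives $\bY\in\MRV(\alpha+\alpha^{*},b_0,\nu_0,\E_0)$.

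Finally I would glue the two pieces together. Because $b\in\RV_{1/\min(\alpha,\alpha^{*})}$ and $b_0\in\RV_{1/(\alpha+\alpha^{*})}$ with $\min(\alpha,\alpha^{*})<\alpha+\alpha^{*}$, the quotient $b(t)/b_0(t)$ is regularly varying with strictly positive index and hence tends to $\infty$. Combined with $\bY\in\MRV(\min(\alpha,\alpha^{*}),b,\nu,\E)\cap\MRV(\alpha+\alpha^{*},b_0,\nu_0,\E_0)$ and the asymptotic tail independence noted above, \Cref{Lemma 2.5} (equivalently, \Cref{def:hrv} applied directly) gives $\bY\in\MRV(\min(\alpha,\alpha^{*}))\cap\HRV(\alpha+\alpha^{*})$, as claimed. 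I expect the steps needing genuine care to be the uniform-on-compacts convergence of the two tail ratios (Karamata's uniform convergence theorem) and the passage from rectangles to full $\M$-convergence, together with the borderline case $\alpha=\alpha^{*}$ in the $\E$-part where tail-equivalence of $Y_1,Y_2$ must be brought in; everything else is bookkeeping.
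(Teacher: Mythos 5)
Your proposal is correct and follows essentially the same route as the paper: the $\E$-part is handled via the marginal tails after the same reduction to $\alpha\le\alpha^{*}$ with a convergent ratio $\ov F_{Y_2}/\ov F_{Y_1}$ (the borderline tail-equivalence caveat appears in both arguments), and the $\E_{0}$-part uses exactly the same observation that independence makes the joint tail the product $\ov F_{Y_1}\ov F_{Y_2}\in\RV_{-(\alpha+\alpha^{*})}$ with limit $x_1^{-\alpha}x_2^{-\alpha^{*}}$. The only differences are presentational: you work with explicit scaling functions $b,b_{0}$ and Karamata's uniform convergence theorem, while the paper phrases the same limits as ratios of set probabilities.
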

\begin{proof}
Either $\ov F_{Y_1}$ and $\ov F_{Y_2}$ are tail-equivalent or one tail is lighter tailed than the other.
Without loss of generality we assume that $Y_{2}$ has a tail lighter than or is tail-equivalent to that of $Y_{1}$, in particular $\alpha\le \alpha^{*}$ and
 \[\lim_{t\to\infty} \frac{\Pr(Y_{2}>t)}{\Pr(Y_{1}>t)} =C,\]
 where $C\in\left[0,\infty\right)$. Then for $x,y>0$ we have with $A_{x,y} = ([0,x]\times[0,y])^{c}$,
\begin{align*}
\Pr\left(\bY \in A_{x,y}\right)  = \Pr(Y_{1}>x) + \Pr(Y_{2}>y) - \Pr(Y_{1}>x)\Pr(Y_{2}>y).
\end{align*}
Hence,
\begin{align*}
\lim_{t\to\infty}\frac{\Pr\left(\bY \in tA_{x,y}\right)} {\Pr\left(\bY \in tA_{1,1}\right)} & = \frac{x^{-\alpha} + Cy^{-\alpha^{*}}}{1+C}
\end{align*}
 where either $\alpha=\alpha^{*}$ or  $C=0$. This implies $\bY \in \MRV(\alpha)$. We also have
\begin{eqnarray*}
    \Pr(Y_1>t,Y_2>t)=\Pr(Y_1>t)\Pr(Y_2>t)\in\RV_{-(\alpha+\alpha^*)},
\end{eqnarray*}
and  for $x,y>0$,
\begin{eqnarray*}
    \frac{\Pr\left(Y_1>x t,Y_2>yt\right)}{\Pr\left(Y_1>t,Y_2>t\right)}
        =\frac{\Pr\left(Y_1>xt\right)}{\Pr\left(Y_1>t\right)}\frac{\Pr\left(Y_2>yt\right)}{\Pr\left(Y_2>t\right)}
        \stackrel{t\to\infty}{\to}x^{-\alpha}y^{-\alpha^*}.
\end{eqnarray*}
Hence, $\bY$ is hidden regularly varying with index $\alpha+\alpha^*$.
\end{proof}
\begin{Remark}
For the additive models we consider next, one of the summands behaves like $\bY$ of Lemma \ref{Lemma1} and hence, the component to be added must have regular variation index smaller than  $\alpha+\alpha^{*}$ to provide a non-trivial generative set of limit models. 
\end{Remark}


\begin{modelletter} \label{additive:model}
Suppose $\bY=(Y_1,Y_2),\bV=(V_1,V_2),$ and $\bZ=(Z_1,Z_2)$ are
random  vectors in $\left[0,\infty\right)^2$ such that $\bZ=\bY+\bV$. Assume the following holds:
\begin{enumerate}
\item[(A1)] $\bY \in \MRV(\alpha,b,\nu,\E)$ and $Y_1,Y_2$ are independent.
\item[(A2)] \label{A2} $\bV \in \MRV(\alpha_0,b_0,\nu_0,\E)$ with $\alpha_{0} \ge \alpha$ and $\bV$ does not possess asymptotic tail independence. Moreover, 
\begin{eqnarray*}
    \lim_{t\to\infty}\frac{\Pr(\|\bV\|>t)}{\Pr(\|\bY\|>t)}=0.
\end{eqnarray*}
\item[(A3)] $\bY$ and $\bV$ are independent.
\item[(A4)]  Suppose $\ov F_{Y_2}\in\RV_{-\alpha^*}$ or $\ov F_{Y_2}(t)=o(t^{-\alpha^*})$ where
  $\alpha^*>\alpha_0-\alpha$.
\end{enumerate}
\end{modelletter}
Obviously, (A1) implies that either $\ov F_{Y_2}\in\RV_{-\alpha}$, or $\ov F_{Y_1}(t) \in \RV_{-\alpha}$ where $\lim_{t\to\infty} {\ov F_{Y_2}(t)}/{\ov F_{Y_1}(t)}=0$.
However, to use \Cref{Lemma1} we impose the additional
assumption (A4) with $\alpha^*>\alpha_0-\alpha$.
Note that $\bY$ and $\bV$ are independent of one another and both are multivariate regularly varying. The tail of
 $\|\bY\|$ is heavier than that of $\|\bV\|$, $\bV$ does not have asymptotic tail independence and hence, in turn no hidden regular variation on $\E_{0}=(0,\infty)^{2}$ can be defined for  $\bV$. The following theorem shows the existence of hidden regular variation in Model \ref{additive:model}. A version of the theorem is stated in
\citet[Theorem 3]{DasFasen2017} referring to the proof in the present paper.


\begin{Theorem}  \label{theorem:3.1}
Let  $\bZ=\bY+\bV$ be as in Model \ref{additive:model}.
Then  the following statements hold:
\begin{enumerate}[(a)]
    \item  $\bZ \in \MRV({\alpha}, b, \nu, \E)\cap \HRV({\alpha_0}, b_0, \nu_0, \E_0)$.
    \item If $Y_2\equiv0$, then   $\bZ^{+}=(Z_1+Z_2,Z_2)\in \MRV({\alpha}, b, \nu, \E)\cap \HRV({\alpha_0}, b_0, \nu_0^+, \E_0)$ with
    \begin{eqnarray*}
        \nu_0^{+}(A)&=&\nu_0(\{(v_1,v_2)\in\E_0:(v_1+v_2,v_2)\in A\}) \quad \text{ for }A\in\mathcal{B}(\E_0).
    \end{eqnarray*}
    \item If  $\liminf\limits_{t\to\infty}\Pr(Y_1>t)/\Pr(Y_2>t)>0$, then $\bZ^{\min}=(Z_1,\min(Z_1,Z_2))\in \MRV({\alpha}, b, \nu^{\min}, \E)
    \cap \HRV({\alpha_0}, b_0, \nu_0^{\min}, \E_0)$ with
        \begin{eqnarray*}
        \nu^{\min}(A)&=&\nu(\{(y_1,0)\in\E:(y_1,0)\in A\}) \quad \text{ for }A\in\mathcal{B}(\E),\\
        \nu_0^{\min}(A)&=&\nu_0(\{(v_1,v_2)\in\E_0:(v_1,\min(v_1,v_2))\in A\}) \quad \text{ for }A\in\mathcal{B}(\E_0).
    \end{eqnarray*}
    \item If $Y_2\equiv0$, then $\bZ^{\max}=(\max(Z_1,Z_2),Z_2)\in \MRV({\alpha}, b, \nu, \
    \E)\cap \HRV({\alpha_0}, b_0, \nu_0^{\max}, \E_0)$ with
    \begin{eqnarray*}
        \nu_0^{\max}(A)&=&\nu_0(\{(v_1,v_2)\in\E_0:(\max(v_1,v_2),v_2)\in A\}) \quad \text{ for }A\in\mathcal{B}(\E_0).
    \end{eqnarray*}
    \end{enumerate}
\end{Theorem}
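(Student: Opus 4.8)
I would view $\bZ=\bY+\bV$ as the tail-heavy vector $\bY$ perturbed by the tail-lighter vector $\bV$, and exploit the two-scale structure: at the coarse scale $b\in\RV_{1/\alpha}$ the perturbation $\bV$ is asymptotically negligible and $\bY$ governs the limit on $\E$, whereas at the fine scale $b_0\in\RV_{1/\alpha_0}$ it is $\bV$ that governs the limit on $\E_0=(0,\infty)^2$, the contribution of $\bY$ on $\E_0$ being suppressed because $Y_1,Y_2$ are independent with tail indices summing to strictly more than $\alpha_0$ --- which is exactly what (A4) ($\alpha^*>\alpha_0-\alpha$) guarantees. The workhorse throughout is the elementary stability fact that if $\bW\in\MRV(\gamma,a,\mu,\cdot)$ and $t\,\Pr(\|\bD\|>\varepsilon\,a(t))\to0$ for every $\varepsilon>0$, then $\bW+\bD\in\MRV(\gamma,a,\mu,\cdot)$; this follows by sandwiching $\Pr(\bW+\bD\in a(t)A)$ between $\Pr(\bW\in a(t)A^{\mp\varepsilon})\pm\Pr(\|\bD\|>\varepsilon\,a(t))$, sending $t\to\infty$ and then $\varepsilon\downarrow0$, and I will use it both literally and in the ``$\bY$ small'' half of the fine-scale estimates.

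\textbf{Part (a).} At the coarse scale, (A1)--(A2) give $t\,\Pr(\|\bV\|>\varepsilon\,b(t))\to0$, so the stability fact applied to $\bY+\bV$ yields $\bZ\in\MRV(\alpha,b,\nu,\E)$ at once. At the fine scale, fix $A\subset\E_0$ bounded away from both axes with $\nu_0(\partial A)=0$ and split $\{\bZ/b_0(t)\in A\}$ along $\{\|\bY\|\le\delta\,b_0(t)\}$ and its complement. On $\{\|\bY\|\le\delta\,b_0(t)\}$ the point $\bZ/b_0(t)$ lies within $O(\delta)$ of $\bV/b_0(t)$, so by the regular variation of $\bV$ on all of $\E$ (from (A2); a set bounded away from both axes is in particular bounded away from $0$) the contribution converges, after multiplying by $t$ and sending $\delta\downarrow0$, to $\nu_0(A)$. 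On the complement, $\{\bZ/b_0(t)\in A\}$ forces each coordinate $Y_i+V_i$ to exceed a fixed positive multiple of $b_0(t)$ while $\|\bY\|>\delta\,b_0(t)$; expanding over which of $Y_1,Y_2,V_1,V_2$ carries each large coordinate and using independence of $Y_1,Y_2$ and of $\bY,\bV$, this probability is at most a finite sum of products $\Pr(U>c\,b_0(t))\,\Pr(U'>c'\,b_0(t))$ with $U,U'\in\{Y_1,Y_2,V_1,V_2\}$ and tail indices drawn from $\{\alpha,\alpha^*,\alpha_0\}$ (up to lighter tails and slowly varying factors). Since $b_0\in\RV_{1/\alpha_0}$, each such product times $t$ is regularly varying of index $1-(\text{sum of the two indices})/\alpha_0$, which is negative because $\alpha>0$ and, in the one borderline case (both large coordinates carried by $Y$), because $\alpha^*>\alpha_0-\alpha$. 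Hence the complement contributes nothing, so $\bZ\in\MRV(\alpha_0,b_0,\nu_0,\E_0)$; together with the scaling requirements $b\in\RV_{1/\alpha}$, $b_0\in\RV_{1/\alpha_0}$, $b/b_0\to\infty$, $\alpha_0\ge\alpha>0$ (all contained in (A1)--(A2)), Definition~\ref{def:hrv} gives $\bZ\in\HRV(\alpha_0,b_0,\nu_0,\E_0)$.

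\textbf{Parts (b)--(d).} I would write $\bZ^{+}=\phi^{+}(\bZ)$, $\bZ^{\min}=\phi^{\min}(\bZ)$, $\bZ^{\max}=\phi^{\max}(\bZ)$ with the continuous, positively homogeneous maps $\phi^{+}(z_1,z_2)=(z_1+z_2,z_2)$, $\phi^{\min}(z_1,z_2)=(z_1,\min(z_1,z_2))$, $\phi^{\max}(z_1,z_2)=(\max(z_1,z_2),z_2)$, each of which is proper on $\E$ (preimages of sets bounded away from $0$ remain so, as one reads off from $\|\phi^{+}(z)\|_\infty=z_1+z_2$, $\|\phi^{\min}(z)\|_\infty=z_1$, $\|\phi^{\max}(z)\|_\infty=\|z\|_\infty$). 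On $\E$ at scale $b$, applying the pushforward to part (a) makes the transformed vector MRV with limit $\phi_{*}\nu$; the measure $\nu$ is carried by the axes with positive mass on the positive $x$-axis, on which $\phi^{+}$ and $\phi^{\max}$ act as the identity, while $\phi^{\min}$ sends the $y$-axis part (if any) to the origin, invisibly to test functions --- so the limits are $\nu$ in (b) and (d) (where $Y_2\equiv0$ makes the $x$-axis the whole support of $\nu$) and $\nu^{\min}=\nu|_{x\text{-axis}}$ in (c), the hypothesis $\liminf_t\Pr(Y_1>t)/\Pr(Y_2>t)>0$ forcing $Y_1\in\RV_{-\alpha}$ and hence $\nu^{\min}\ne0$. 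On $\E_0$ at scale $b_0$ these maps fail to be proper, so I would argue directly as in part (a): on $\{\|\bY\|\le\delta\,b_0(t)\}$ the transformed vector at scale $b_0$ lies within $O(\delta)$ of $\phi(\bV/b_0(t))$, and since $\bV$ is MRV on the \emph{whole} cone $\E$ (by (A2)), where $\phi$ \emph{is} proper, the pushforward converges after $\times t$ and $\delta\downarrow0$ to $\nu_0^{+}$, $\nu_0^{\min}$, $\nu_0^{\max}$ respectively; on $\{\|\bY\|>\delta\,b_0(t)\}$ the event forces a large $Y$-coordinate alongside a large coordinate of $\bV$, a probability whose product with $t$ is regularly varying of negative index ($\alpha>0$ alone suffices for (b) and (d), while (c) again needs (A4)). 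Definition~\ref{def:hrv} then yields the stated HRV statements.

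\textbf{Main obstacle.} The delicate step is the fine-scale estimate on $\E_0$: one has to enumerate all the ways the two coordinates $Y_i+V_i$ can be simultaneously of order $b_0(t)$, pair each case with the correct regularly varying tail, and verify that every resulting product, once multiplied by $t$, decays --- which is precisely where $\alpha_0\ge\alpha>0$ and the threshold $\alpha^*>\alpha_0-\alpha$ of (A4) get consumed, and where the Potter-bound bookkeeping for the slowly varying parts and the interchange of $\lim_t$ with $\lim_\delta$ must be done with care. A secondary but genuine point is that $\phi^{+}$, $\phi^{\min}$, $\phi^{\max}$ are not proper on $\E_0$, so the transformed limit measures \emph{there} cannot be obtained by pushing forward the hidden regular variation of $\bZ$; one must push forward the (full) regular variation of $\bV$ on $\E$, where properness does hold, and only afterwards absorb the asymptotically negligible $\bY$.
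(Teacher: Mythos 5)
Your proposal is correct and follows essentially the same route as the paper: coarse-scale MRV on $\E$ by treating $\bV$ (resp.\ a remainder) as a negligible perturbation of $\bY$, and the hidden (fine-scale) limit by conditioning on $\{\|\bY\|\le\delta b_0(t)\}$, identifying the limit there with that of the transformed $\bV$, which is regularly varying on all of $\E$ where the maps behave well, and killing the complementary event through independence and Potter-type bounds, with (A4) consumed exactly in the case where both large coordinates are carried by $\bY$. The only cosmetic differences are that the paper proves (b) by rewriting $\bZ^{+}=(Y_1,0)+(V_1+V_2,V_2)$ as a fresh instance of Model~A, obtains the coarse-scale statements in (c), (d) via the perturbation lemma of Jessen--Mikosch rather than a mapping-theorem pushforward, and phrases the fine-scale step with uniformly continuous test functions and a modulus-of-continuity bound instead of your set-level sandwich; also, your blanket remark that the maps fail to be proper on $\E_0$ is actually not needed for $\phi^{\min}$ (whose preimages of sets bounded away from the axes remain so), though the workaround you adopt is exactly the paper's and is valid in all three cases.
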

Since our aim is often to find and compare systemic risk in the presence of risk factors pertaining to two institutions, Theorem \ref{theorem:3.1} addresses
different kinds of measures for systemic risk in this context. If risk is additive and we compare risk of one with that of the portfolio of the system we refer to (b), if
risk is measured in terms of both institutions being at risk we refer to (c), and in case systemic risk pertains to any of the institutions being in risk, we refer to part (d).
Thus, a gamut of systemic risk measurement can be addressed under Model \ref{additive:model}.

\begin{proof}[Proof of \Cref{theorem:3.1}] 
$\mbox{}$
\begin{enumerate}
\item[(a)] \textbf{Step 1.} We get $\bZ \in \MRV({\alpha}, b, \nu, \E)$ using \citet[Lemma~3.12]{jessen:mikosch}.\\
\textbf{Step 2.}  The proof of $\bZ\in\MRV(\alpha_0,b_0,\nu_0,\E_0)$ is analogous to the proof of Proposition~3.2 in \citet{das:resnick:2015}  and skipped here.

\item[(b)] Define $\bY^+=(Y_1,0)$, $\bV^+=(V_1+V_2,V_2)$ and write $\bZ^+=\bY^++\bV^+$. Now, we can check that $\bY^+ \in \MRV(\alpha,b,\nu,\E)$ and $\bV^+ \in \MRV(\alpha_{0},b_{0},\nu_{0}^{+},\E)$. Moreover, conditions (A2), (A3) and (A4) for Model \ref{additive:model} with $\bY^+$ and $\bV^+$ are also satisfied. Hence, $\bZ^+$ can be considered to be an example of  case (a) again.
\item[(c)] \textbf{Step 1.} First, we show that $\bZ^{\min}=(Z_1,\min(Z_1,Z_2))\in \MRV({\alpha}, b, \nu^{\min}, \E)$. Define
\begin{eqnarray*}
    \bR:=\bZ^{\min}-(Y_{1},0)=(V_1,\min(Y_1+V_1,Y_2+V_2)).
\end{eqnarray*}
Then
\begin{eqnarray*}
    \Pr(\|\bR\|>t)\leq \Pr(\min(Y_1,Y_2)>t/2)+\Pr(\max(V_1,V_2)>t/2)=o(\Pr(Y_1>t)) \quad \text{as } t\to\infty,
\end{eqnarray*}
since $\ov F_{\min(Y_{1},Y_2)} (t)\in \RV_{-(\alpha+\alpha^{*})}$ or  $\ov F_{\min(Y_{1},Y_2)}(t) = o(\ov F_{Y_1}(t)t^{-\alpha^{*}})$ and $\ov F_{\max(V_{1},V_2)}(t) \in \RV_{-\alpha_0}$.  Since \linebreak $(Y_1,0)\in \MRV({\alpha}, b, \nu^{\min}, \E)$,
using \cite[Lemma 3.12]{jessen:mikosch} we have  $\bZ^{\min}\in \MRV({\alpha}, b, \nu^{\min}, \E)$.

\textbf{Step 2.} Next, we prove $\bZ^{\min}\in \MRV({\alpha}_0, b_0, \nu^{\min}_0, \E_0)$.
We apply criterion (ii) of the Portmanteau Theorem 2.1 in \citet{lindskog:resnick:roy:2014} to show that
\[\nu_{t}(\cdot) = t\Pr\left({\bZ^{\min}}/{b_{0}(t)} \in \cdot \right) \stackrel{\M}{\to} \nu_{0}^{\min}(\cdot) \quad \text{in} \;\; \M(\E_{0}).\] Let $f$ be in $ C((0,\infty)^2)$
and without loss of generality suppose that $f$ is bounded by a constant $\|f\|$, is uniformly continuous and
\begin{eqnarray*}
    f(\bx)=0\quad \text{ if } x_1\wedge x_2<\eta,
\end{eqnarray*}
for some $\eta>0$. Uniform continuity of $f$ means that the modulus of continuity
\begin{eqnarray*}
    \omega_f(\delta):=\sup\{|f(\bx)-f(\by)|: \|\bx-\by\|<\delta\}\stackrel{\delta\to 0}{\to}0.
\end{eqnarray*}
Since $\bV^{\min}:=(V_1,\min(V_1,V_2)) \in \MRV(\alpha_0,b_0,\nu_0^{\min},\E)$ we have
\begin{eqnarray*}
    \lim_{t\to\infty}t\,\E\{f(\bV^{\min}/b_0(t))\}=\nu_0^{\min}(f),
\end{eqnarray*}
and so it suffices to show that as $t\to\infty$,
\begin{eqnarray*} 
  \lim_{t\to\infty}t\,\E\{f(\bZ^{\min}/b_0(t))\} - t\,\E\{f(\bV^{\min}/b_0(t))\} =0.\end{eqnarray*}
Let $0<\delta<\eta$. Then
\begin{align*}
  t\,\E\{f(\bZ^{\min}/b_0(t))\} & - t\,\E\{f(\bV^{\min}/b_0(t))\} \\
        =&\, t \,\E\left[\left\{f(\bZ^{\min}/b_0(t))-f(\bV^{\min}/b_0(t))\right\}\1_{\{Y_1\vee Y_2>b_0(t)\delta\}}\right]\\
        \quad& +\, t\,\E\left[\left\{f(\bZ^{\min}/b_0(t))-f(\bV^{\min}/b_0(t))\right\}\1_{\{Y_1\vee Y_2\leq b_0(t)\delta\}}\right] \\
        =:&\,  I_1(t,\delta)+I_2(t,\delta).
\end{align*}
For $I_1(t,\delta)$ we take the upper bound
\begin{eqnarray*}
    \hspace*{-1cm}|I_1(t,\delta)|
    &\leq&t\,\left|\E\left[\left\{f(\bZ^{\min}/b_0(t))-f(\bV^{\min}/b_0(t))\right\}\1_{\{Y_1\wedge Y_2>b_0(t)\delta\}}\right]\right| \\
        &&+t\,\left|\E\left[\left\{f(\bZ^{\min}/b_0(t))-f(\bV^{\min}/b_0(t))\right\}\1_{\{Y_1>b_0(t)\delta, Y_2\leq b_0(t)\delta\}}\right]\right| \\
        &&+t\,\left|\E\left[\left\{f(\bZ^{\min}/b_0(t))-f(\bV^{\min}/b_0(t))\right\}\1_{\{Y_1\leq b_0(t)\delta,Y_2>b_0(t)\delta\}}\right]\right| \\
        &=:&J_1(t,\delta)+J_2(t,\delta)+J_3(t,\delta).
\end{eqnarray*}
First, by Potter's Theorem  (see \citet[Theorem 1.5.6]{bingham:goldie:teugels:1989}) for any $\epsilon\in(0,\alpha^*+\alpha-\alpha_0)$ there exists a constant $C_{1}>0$ such that for large $t$
\begin{align*}
    J_1(t,\delta) & \leq 2\|f\| t \Pr(Y_1\wedge Y_2>b_0(t)\delta)\\
                        & =2\|f\|t\,\Pr(Y_1>b_0(t)\delta) \Pr( Y_2>b_0(t)\delta)
                        \leq C_{1} t^{\frac{\alpha_0-\alpha-\alpha^*+\epsilon}{\alpha_0}}
    \stackrel{t\to\infty}{\to}0.
\end{align*}
Similarly, by Potter's Theorem  for $\epsilon\in(0,\alpha)$ there exists a constant $C_{2}>0$ such that
\begin{align*}
    J_2(t,\delta) & \leq 2\|f\| t \Pr(Y_1\wedge V_2>b_0(t)\min\{(\eta-\delta),\delta\}) 
                        \leq C_{2} t^{\frac{-\alpha+\epsilon}{\alpha_0}}
    \stackrel{t\to\infty}{\to}0.
\end{align*}
Finally, again,  by Potter's Theorem  for $\epsilon\in(0,\alpha^{*})$ there exists a constant $C_{3}>0$ such that
\begin{align*}
    J_3(t,\delta) & \leq 2\|f\| t \Pr(V_1\wedge Y_2>b_0(t)\min\{(\eta-\delta),\delta\}) 
                        \leq C_{3} t^{\frac{-\alpha^{*}+\epsilon}{\alpha_0}}
    \stackrel{t\to\infty}{\to}0.
\end{align*}
Hence, we have
\begin{eqnarray*}
    \lim_{t\to\infty}|I_1(t,\delta)|=0.
\end{eqnarray*}
Since by definition $f(\bx)=0$ if $x_{1}\wedge x_{2}<\eta$, we have
\begin{eqnarray*}
    |I_2(t,\delta)|&=&t\left|\E\left[\left\{f(\bZ^{\min}/b_0(t))-f(\bV^{\min}/b_0(t))\right\}\1_{\{Y_1\vee Y_2\leq b_0(t)\delta,V_1\wedge V_2>(\eta-\delta)b_0(t)\}}\right]\right| \\
        &\leq&\omega_f(\delta)t\Pr(V_1\wedge V_2>(\eta-\delta)b_0(t)).
\end{eqnarray*}
Hence,
\begin{eqnarray*}
    \lim_{\delta\to 0}\lim_{t\to\infty}|I_2(t,\delta)|\leq \limsup_{\delta\to 0}\omega_f(\delta)(\eta-\delta)^{-\alpha_0}=0.
\end{eqnarray*}
Therefore, we have  $\bZ^{\min}=(Z_1,\min(Z_1,Z_2))\in\MRV({\alpha_0}, b_0, \nu_0^{\min}, \E_0)$.

\item[(d)] \textbf{Step 1.} Note that $Y_2 \equiv  0$. First, we show that $\bZ^{\max}=(\max(Z_1,Z_2),Z_2)\in \MRV({\alpha}, b, \nu,
    \E)$. Define \linebreak $\bR:=\bZ^{\max}-\bY.$
Then
\begin{eqnarray*}
    \Pr(\|\bR\|>t)\leq \Pr(\max(V_1,V_2)>t/2)=o(\Pr(Y_1>t)) \quad \text{as } t\to\infty.
\end{eqnarray*}
Since $\bY=(Y_1,0)\in \MRV({\alpha}, b, \nu^{\max}, \E)$,
using \cite[Lemma 3.12]{jessen:mikosch} we have  $\bZ^{\max}\in \MRV({\alpha}, b, \nu^{\max}, \E)$.

\textbf{Step 2.} Next, we show that $\bZ^{\text{max}}=(\max(Z_1,Z_2),Z_2)\in \MRV({\alpha_0}, b_0, \nu_0^{\max}, \E_0)$.\\
Since $\bV \in \MRV(\alpha_0,b_0,\nu_0,\E)$ we have $\bV^{\max} := (\max(V_{1}, V_{2}), V_{2}) \in \MRV(\alpha_0,b_0,\nu_0^{\max},\E)$ and with the notations and definitions of (c),
\begin{eqnarray*}
    \lim_{t\to\infty}t\,\E\{f(\bV^{\max}/b_0(t))\}=\nu_0^{\max}(f).
\end{eqnarray*}
So it suffices to show that
\begin{eqnarray*} 
  \lim_{t\to\infty}t\,\E\{f(\bZ^{\max}/b_0(t))\} - t\,\E\{f(\bV^{\max}/b_0(t))\} =0.\end{eqnarray*}
Let $0<\delta<\eta$. Then
\begin{align*}
 t\,\E\{f(\bZ^{\max}/b_0(t))-f(\bV^{\max}/b_0(t))\}
        =&\, t \,\E\left[\left\{f(\bZ^{\max}/b_0(t))-f(\bV^{\max}/b_0(t))\right\}\1_{\{Y_1>b_0(t)\delta\}}\right]\\
        \quad& +\, t\,\E\left[\left\{f(\bZ^{\max}/b_0(t))-f(\bV^{\max}/b_0(t))\right\}\1_{\{Y_1\leq b_0(t)\delta\}}\right] \\
        =:&\,  I_1(t,\delta)+I_2(t,\delta).
\end{align*}
Again by Potter's Theorem  for any fixed  $\epsilon\in(0,\alpha)$ there exists a constant $C>0$ such that for large $t$,
\begin{eqnarray*}
    |I_1(t,\delta)|\leq 2\|f\| t \Pr(Y_1>b_0(t)\delta)\Pr(V_2>b_0(t)\eta) \le C t^{\frac{-\alpha+\epsilon}{\alpha_{0}}}
    \stackrel{t\to\infty}{\to}0,
\end{eqnarray*}
and moreover,
\begin{eqnarray*}
    |I_2(t,\delta)|&=&t\left|\E\left[\left\{f(\bZ^{\max}/b_0(t))-f(\bV^{\max}/b_0(t))\right\}\1_{\{Y_1\leq b_0(t)\delta,V_2>\eta b_0(t)\}}\right]\right| \\
        &\leq&\omega_f(\delta)t\Pr(V_2> \eta b_0(t))
\end{eqnarray*}
implying $$\lim_{\delta\to 0}\lim_{t\to\infty}|I_2(t,\delta)|\leq \limsup_{\delta\to 0}\omega_f(\delta)\eta^{-\alpha_0}=0.$$
Hence, we can conclude as in (c) that  $\bZ^{\max}=(\max(Z_1,Z_2),Z_2)\in \MRV({\alpha_0}, b_0, \nu_0^{\max}, \E_0)$.

\end{enumerate}

\end{proof}

Since in Model \ref{additive:model} we know either $\ov F_{Y_2}\in\RV_{-\alpha}$, or $\ov F_{Y_1}(t) \in \RV_{-\alpha}$ where $\lim_{t\to\infty} {\ov F_{Y_2}(t)}/{\ov F_{Y_1}(t)}=0$,
 we obtain the following special case as well.

\begin{Corollary} \label{Corollary 2a}
Let  $\bZ=\bY+\bV$ be as in Model \ref{additive:model} and assume
$2\alpha>\alpha_0$.
Then the assumptions  and hence, the conclusions of \cref{theorem:3.1} are satisfied.
\end{Corollary}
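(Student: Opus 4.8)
The plan is to observe that the single extra hypothesis $2\alpha > \alpha_0$, in the presence of (A1)--(A3), already forces condition (A4) of Model~\ref{additive:model}; once (A4) is available, the corollary follows immediately by invoking \Cref{theorem:3.1}. So the whole task reduces to exhibiting an exponent $\alpha^\ast > \alpha_0 - \alpha$ for which one of the two alternatives in (A4) holds.

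First I would note that, since (A2) gives $\alpha_0 \ge \alpha$ while by hypothesis $\alpha_0 < 2\alpha$, the open interval $(\alpha_0 - \alpha,\,\alpha)$ is non-empty; fix any $\alpha^\ast$ in it, so that $\alpha_0 - \alpha < \alpha^\ast < \alpha$. Next I would recall the dichotomy for $\bY$ already recorded just before the statement: because $\bY \in \MRV(\alpha,b,\nu,\E)$ has independent coordinates, either $\ov F_{Y_2} \in \RV_{-\alpha}$, or else $\ov F_{Y_1} \in \RV_{-\alpha}$ with $\ov F_{Y_2}(t)/\ov F_{Y_1}(t) \to 0$ as $t \to \infty$. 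In the first case, a regularly varying tail of index $-\alpha$ is $o(t^{-\alpha^\ast})$ for every $\alpha^\ast < \alpha$ (a standard property of regular variation; see \citet{bingham:goldie:teugels:1989}), so $\ov F_{Y_2}(t) = o(t^{-\alpha^\ast})$. In the second case the same property gives $\ov F_{Y_1}(t) = o(t^{-\alpha^\ast})$, hence $\ov F_{Y_2}(t) = o(\ov F_{Y_1}(t)) = o(t^{-\alpha^\ast})$. Either way $\ov F_{Y_2}(t) = o(t^{-\alpha^\ast})$ with $\alpha^\ast > \alpha_0 - \alpha$, which is exactly (A4).

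Consequently $\bZ = \bY + \bV$ satisfies all of (A1)--(A4), i.e.\ the full hypotheses of Model~\ref{additive:model}, so \Cref{theorem:3.1} applies verbatim and delivers all of its conclusions. There is essentially no obstacle here: the only point requiring any care is the passage, in the second case of the dichotomy, from the qualitative bound $\ov F_{Y_2} = o(\ov F_{Y_1})$ to the quantitative bound $o(t^{-\alpha^\ast})$, and this is precisely where --- and the only place where --- the inequality $2\alpha > \alpha_0$ is consumed, namely to guarantee that an admissible $\alpha^\ast$ strictly between $\alpha_0 - \alpha$ and $\alpha$ exists. Everything else is bookkeeping, which is why the statement is only a corollary of \Cref{theorem:3.1}.
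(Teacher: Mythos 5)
Your proof is correct and follows essentially the same route the paper intends (the paper gives no separate proof beyond the sentence preceding the corollary): use the dichotomy that either $\ov F_{Y_2}\in\RV_{-\alpha}$ or $\ov F_{Y_1}\in\RV_{-\alpha}$ with $\ov F_{Y_2}(t)=o(\ov F_{Y_1}(t))$, and use $2\alpha>\alpha_0$ solely to produce an admissible exponent $\alpha^*>\alpha_0-\alpha$ so that (A4) holds, after which \cref{theorem:3.1} applies. The only cosmetic difference is that in the first case one may take $\alpha^*=\alpha$ via the first alternative of (A4), whereas you choose $\alpha^*\in(\alpha_0-\alpha,\alpha)$ and use the $o(t^{-\alpha^*})$ alternative; both are valid.
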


\subsection{Hidden regular variation and the survival copula} \label{subsec:copsurv}


We present a characterization of hidden regular variation
via the behavior of the survival copula. First, we introduce a generalized version of the upper tail order function
(see \citet{Hua:Joe:2011,Hua:Joe:2013b}) along with an \emph{upper tail order pair}. The notion of upper tail order pair is related also to
 \emph{operator tail dependence} in \citet{li:2016}, and to the
\emph{generalized upper tail index} $\kappa$ in \citet{wadsworth:tawn:2013}.

\begin{Definition}[Upper Tail Order]
Let $F$ be a bivariate distribution function with survival copula $\wh C$. For a given constant $\tau>0$, if there exist a real constant
$\kappa>0$, and a slowly varying function
$\ell$ at $0$ with
\begin{eqnarray}\label{eq:utpair}
    \wh C(s,s^\tau)\sim s^{\kappa}\ell(s) \quad \text{as } s\downarrow 0,
\end{eqnarray}
the pair $(\kappa,\tau)$ is called an {\em upper tail order pair} of $F$.
The {\rm upper tail order function} $T:\E_0\to\R_+$ with respect to $(\kappa,\tau)$ is defined as
\begin{eqnarray}\label{eq:Txy}
    T(x,y)=\lim_{s\downarrow 0}\frac{\wh C(sx,s^\tau y)}{s^{\kappa}\ell(s)} \quad \text{for } x,y>0
\end{eqnarray}
provided that the limit function exists.
\end{Definition}
\begin{Remark}
Note that the pair $(\tau, \kappa)$ need not be a unique  for the definition to hold. Albeit this fact, introducing the quantity $\tau$ helps in rescaling
marginal tails when they are not equivalent (see Theorem \ref{Lemma:Hua:Joe:Li:1} below). Since \linebreak $0\leq \wh C(s,s^\tau) \leq \wh C(1,s^\tau)= s^\tau$ for $s\in(0,1)$ we have $\kappa\geq \tau$
and similarly we obtain $\kappa\geq 1$ as well.
Note that the existence of the upper tail order pair is not a
sufficient assumption for the existence of the  upper tail order function. We often provide examples fixing $\tau=1$, which also fixes the value of $\kappa$.
\end{Remark}
\begin{Lemma}   \label{Lemma 3.6}
Suppose the bivariate distribution function $F$ with survival copula $\wh C$ exhibits asymptotic upper tail independence and the upper tail order pair $(\kappa,\tau)$ exists with
$\tau\geq 1$ and $\wh C(s,s^\tau)\sim s^{\kappa}\ell(s)$ as $s\downarrow 0$. Then $$\lim_{s\downarrow 0}s^{\kappa-1}\ell (s)=0.$$
\end{Lemma}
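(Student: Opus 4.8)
The plan is to reduce the statement to part (b) of \Cref{lem:basicconnect} together with the monotonicity of the survival copula. First I would record the content of the hypothesis: asymptotic upper tail independence of $F$ is, by \Cref{lem:basicconnect} (the equivalence $(a)\Longleftrightarrow(b)$, which only involves $\wh C$ and the continuous margins), the same as
\[
\lim_{p\downarrow 0}\frac{\wh C(p,p)}{p}=0 .
\]

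Next I would exploit the assumption $\tau\ge 1$. For every $s\in(0,1)$ this gives $s^{\tau}\le s$, and since the survival copula $\wh C$ is a bivariate distribution function — in particular non-decreasing in its second coordinate — it follows that $\wh C(s,s^{\tau})\le\wh C(s,s)$ for all $s\in(0,1)$. Dividing by $s$ and combining with the display above,
\[
0\le \frac{\wh C(s,s^{\tau})}{s}\le\frac{\wh C(s,s)}{s}\longrightarrow 0\qquad\text{as }s\downarrow 0 .
\]
Finally, the assumed equivalence $\wh C(s,s^{\tau})\sim s^{\kappa}\ell(s)$ lets me write
\[
s^{\kappa-1}\ell(s)=\frac{s^{\kappa}\ell(s)}{\wh C(s,s^{\tau})}\cdot\frac{\wh C(s,s^{\tau})}{s},
\]
where the first factor tends to $1$ while the second tends to $0$; hence the product tends to $0$, which is exactly $\lim_{s\downarrow 0}s^{\kappa-1}\ell(s)=0$.

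There is no real obstacle here: the only point needing a moment's care is the legitimacy of multiplying the two limiting relations in the last display, which is immediate since $s^{\kappa}\ell(s)/\wh C(s,s^{\tau})\to 1$ and therefore stays bounded as $s\downarrow 0$. It is worth emphasising that the comparison $\wh C(s,s^{\tau})\le\wh C(s,s)$ — the heart of the argument — uses the hypothesis $\tau\ge 1$ essentially; for $\tau<1$ the only comparison available is $\wh C(s,s^{\tau})\le\wh C(1,s^{\tau})=s^{\tau}$, which merely recovers the already noted bound $\kappa\ge\tau$ and not the conclusion.
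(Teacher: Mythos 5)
Your proof is correct and follows essentially the same route as the paper: both arguments rest on the equivalence $(a)\Leftrightarrow(b)$ of \Cref{lem:basicconnect} giving $\wh C(s,s)/s\to 0$, the monotonicity bound $\wh C(s,s^{\tau})\le \wh C(s,s)$ coming from $\tau\ge 1$, and the asymptotic equivalence $\wh C(s,s^{\tau})\sim s^{\kappa}\ell(s)$. The only difference is presentational — the paper packages these facts into a single inequality involving a $\liminf$, whereas you factor $s^{\kappa-1}\ell(s)$ directly into a product of a term tending to $1$ and a term tending to $0$, which is, if anything, slightly cleaner.
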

\begin{proof}
Note that,  for $\tau\geq 1$,
\begin{eqnarray}\label{ineq}
    1= \lim_{s\downarrow 0}\frac{\wh C(s,s^\tau)}{s^{\kappa}\ell(s)}\leq \lim_{s\to0 } \frac{\wh C(s,s)}{s}
    \liminf_{s\downarrow 0}\frac{1}{{s^{\kappa-1}\ell(s)}}.
\end{eqnarray}
Since $F$ exhibits asymptotic upper tail independence, using Lemma \ref{lem:basicconnect}, we have  $\lim_{s\downarrow 0}\wh C(s,s)/s=0.$
Hence, the inequality in \eqref{ineq} is only possible if $\liminf_{s\downarrow 0}1/(s^{\kappa-1}\ell(s))=\infty$.
\end{proof}

\begin{Remark}
The classical definition of upper tail order is for $\tau=1$  and it is
equivalent to the definition of \textit{coefficient of tail
dependence} in \citet{Ledford:Tawn}.
\begin{enumerate}[(1)]
\item If $\kappa=\tau=1$ and $\lim_{s\downarrow 0}\ell(s)=c$ for some finite constant $c$, then we get
asymptotic dependence in the upper tail. In this case $T$ is the upper tail dependence function introduced
in \citet{Jaworski2006}.  However, if $\kappa=\tau=1$ and $\lim_{s\downarrow 0}\ell(s)=0$ then again we observe asymptotic tail independence.
\item The case $1<\kappa<2$, $\tau=1$ is between tail dependence ($\kappa=1$ and $c\not=0$) and tail independence ($\kappa=2$) and indicates some positive tail dependence
although the tails are asymptotically tail independent. It is called   \textit{intermediate tail dependence} by \citet{Hua:Joe:2011,Hua:Joe:2013b}.
\item Note that it is possible to have $\kappa>2$ which often signifies \emph{negative tail dependence}; see Example \ref{example:cop} (a) below.
 \end{enumerate}
\end{Remark}


\begin{Example} \label{example:cop} We compute upper tail order functions and upper tail order pairs for a few well-known (survival) copula models here.
\begin{itemize}
\item[(a)] The \emph{Gaussian copula} turns out to be one of the most famous, if not infamous copula models, especially in \emph{financial risk management};
see \citet{salmon:2009}. It is given by
\begin{eqnarray*}
    C_{\Phi,\rho}(u,v)=\Phi_2(\Phi^{\leftarrow}(u),\Phi^{\leftarrow}(v)) \quad \text{for } (u,v)\in[0,1]^2,
\end{eqnarray*}
where $\Phi$ is the standard-normal distribution function and $\Phi_2$ is a bivariate normal distribution function
with standard normally distributed margins and correlation $\rho$. Then  the survival copula  satisfies:
\begin{eqnarray*}
    \wh C_{\Phi,\rho}(s,s)=C_{\Phi,\rho}(s,s)\sim s^{\frac{2}{\rho+1}}\ell(s) \quad \text{as } s\downarrow 0.
\end{eqnarray*}
(see \citet{reiss:1989,Ledford:Tawn}). For $\rho\in(-1,1)$ we have $\kappa=2/(\rho+1)>1$ and $\tau=1$ so that a distribution  with Gaussian copula
has still some kind of dependence although it exhibits asymptotic upper tail independence. The upper tail order function
is given by (see \citet[Example 7.2.7]{reiss:1989})
\begin{eqnarray*}
    T(x,y)= x^{\frac{1}{\rho+1}}y^{\frac{1}{\rho+1}} \quad \text{for } x,y>0.
\end{eqnarray*}
Note that $0<\rho <1$ implies $1<\kappa<2$ relating to positive intermediate tail dependence, $\rho=0$ implies $\kappa=2$ which is the independent case
and $\rho<0$ implies $\kappa>2$ which is the case of \emph{negative tail dependence}.

\item[(b)] If the survival copula  is a \emph{Marshall-Olkin copula} then:
$$\wh C_{\gamma_1,\gamma_2}(u,v)=uv\min(u^{-\gamma_1},v^{-\gamma_2})  \quad \text{for } (u,v)\in[0,1]^2,$$  some $\gamma_1,\gamma_2\in(0,1)$.
For a fixed $\tau\ge 1$. the upper tail order is {$\kappa=\max(\tau+1-\gamma_1,\tau+1-\tau\gamma_2)$}, and the upper tail order
function is
$$T(x,y)=\left\{\begin{array}{ll}
    xy^{1-\tau \gamma_2}, & \text{if } \gamma_1>\tau\gamma_2,\\
    xy\max(x,y^{1/\tau})^{-\gamma_1},& \text{if }\gamma_1=\tau\gamma_2,\\
    x^{1-\gamma_1}y, & \text{if }\gamma_1<\tau\gamma_2,
\end{array}    \right. \quad \text{for } x,y>0.
    $$
The Marshall-Olkin copula
belongs to the class of extreme value copulas. This structure of $T(x,y)$ holds in general for bivariate
extreme value copulas with discrete Pickands dependence function; see \citet[Example 2]{Hua:Joe:2011}.

\item[(c)] If the survival copula is a \emph{Morgenstern copula} with parameter $-1\le \theta\leq 1$ then:
\begin{eqnarray*}
    \wh C_\theta(u,v)=uv(1+\theta(1-u)(1-v)) \quad \text{for } (u,v)\in[0,1]^2.
\end{eqnarray*}
Hence, for $-1<\theta\le 1$ and fixed $\tau\ge 1$ we get $\kappa=\tau+1$ with upper tail order function $T(x,y)=xy^{\tau}$ for $x,y>0$. For $\theta=-1$ we have
the upper tail order pair $(\kappa=3,\tau=1)$ with upper tail order function $T(x,y)=xy(x+y)$ for $x,y>0$. If we fix $\tau>1$ then $\kappa=1+2\tau$ and
the upper tail order function is $T(x,y)=xy^{2}$ for $x,y>0$.
\item[(d)] A copula is called an \emph{Archimedean copula}  if it is  of the form
\begin{eqnarray*}
    C(u,v)=\phi^{\leftarrow}(\phi(u)+\phi(v))\quad \mbox{ for }(u,v)\in[0,1]^2,
\end{eqnarray*}
where the Archimedean generator $\phi:[0,1]\to[0,1]$ is convex, decreasing and satisfies $\phi(1)=0$.
In the bivariate case this is a necessary and sufficient condition for $C$ to be a copula. If the generator
$\phi$ is regularly varying near $0$ the lower tail is asymptotically independent and if $\phi$ is regularly varying at $1$
the upper tail is asymptotically  independent (see \citet{Ballerini,Capera:Fougeres:Genest}). \citet[Section 4]{Charpentier:Segers} provide
tail order coefficients and functions for Archimedean copulas specifically under asymptotic tail independence. We use them to generalize results
 for the tail order pair here; see also \citet{Hua:Joe:2011}.
\begin{enumerate}
       \item Let $\phi^{\leftarrow}$ be twice continuously differentiable with ${\phi^{\leftarrow}}''(0)<\infty$.
            Then an upper tail order pair is $(\kappa=2,\tau=1)$ with upper tail order function
            \begin{eqnarray*}
                T(x,y)=xy.
            \end{eqnarray*}
       \item Let $\phi'(1)=0$ and let the function $-\phi'(1-s)-s^{-1}\phi(1-s)$ be positive and slowly varying around $0$.
             Then the upper tail order pair is $(\kappa=1,\tau=1)$ with upper tail order function
            \begin{eqnarray*}
                T(x,y)=\{(x+y)\log(x+y)-x\log x-y\log y\}/(2\log 2).
            \end{eqnarray*}
       \item Assume that $\widehat C$ is an Archimedean copula with generator $\phi$ satisfying $\phi(0)=\infty$, $\lim_{s\downarrow 0}s\phi'(s)/\phi(s)=0$
            and $-1/(\log \phi^{\leftarrow})'$ is regularly varying with some index $\rho\leq 1$.  Then the upper tail order pair is
            $(\kappa=2^{1-\rho},\tau=1)$ with upper tail order function
            \begin{eqnarray*}
                T(x,y)=x^{2^{-\rho}}y^{2^{-\rho}}.
            \end{eqnarray*}
\end{enumerate}
\end{itemize}
\end{Example}

We are now able to present a connection between hidden regular variation and the upper tail order
function;  these results are extensions of \citet{Hua:Joe:Li:2014} and \citet{Li:Hua:2015} which include several examples
as well. The first result is a generalization of \citet[Proposition 3.2]{Hua:Joe:Li:2014} for $\tau=1$.

\begin{Theorem} \label{Lemma:Hua:Joe:Li:1}
Let $\bZ\in \MRV(\alpha, b,\nu, \E)\cap \HRV(\alpha_0, b_0,\nu_0)$ with continuous margins $F_1, F_2$ satisfying
\linebreak $\lim_{t\to\infty}\ov F_{2}(t)/\ov F_{1}^\tau(t)=\eta\in(0,\infty)$ where $\alpha_0/\alpha\geq \tau \geq 1$.
 Then an upper tail order pair
exists with $(\kappa=\alpha_0/\alpha,\tau)$, and the corresponding upper tail order function is given by
\begin{eqnarray*}
    T(x,y)=C\nu_0\left(\left(x^{-1/\alpha},\infty\right]\times\left(\eta^{1/(\tau\alpha)}y^{-1/(\tau\alpha)},\infty\right]\right)  \quad  \text{for } x,y>0,
\end{eqnarray*}
where $0<C= \left\{\nu_0\left(\left(1,\infty\right]\times\left(\eta^{1/(\tau\alpha)},\infty\right]\right)\right\}^{-1}<\infty$.
\end{Theorem}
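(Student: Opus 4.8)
The plan is to translate the tail-probability statements hidden in $\bZ\in\MRV(\alpha,b,\nu,\E)\cap\HRV(\alpha_0,b_0,\nu_0)$ directly into statements about the survival copula $\wh C$ evaluated near the corner $(1,1)$, using the marginal quantile transforms. First I would set up notation: write $U_i=F_i(Z_i)$, so that $(U_1,U_2)$ has joint survival function $\wh C$, and recall that $\wh C(u_1,u_2)=\Pr(Z_1>F_1^\leftarrow(1-u_1),\,Z_2>F_2^\leftarrow(1-u_2))$ for continuous margins. The key observation is that $\bZ\in\HRV(\alpha_0,b_0,\nu_0)$ means
\[
\lim_{t\to\infty} t\,\Pr\bigl(Z_1>b_0(t)x^{-1/\alpha},\,Z_2>b_0(t)\,w\bigr)
=\nu_0\bigl(\,(x^{-1/\alpha},\infty]\times(w,\infty]\,\bigr)
\]
for continuity points, while $b_0\in\RV_{1/\alpha_0}$. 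I would then choose the scaling so that the first marginal level matches $s$: pick $t=t(s)$ with $\ov F_1(b_0(t))\sim s$ (possible since $\ov F_1\circ b_0\in\RV_{-\alpha/\alpha_0}$ is regularly varying and hence asymptotically invertible), which forces $t\sim s^{-\alpha_0/\alpha}$ up to slowly varying factors, giving the exponent $\kappa=\alpha_0/\alpha$.

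Next I would handle the second coordinate via the hypothesis $\ov F_2(t)/\ov F_1^\tau(t)\to\eta$. With the threshold for $Z_2$ chosen as $F_2^\leftarrow(1-s^\tau y)$, I need to compare it to a multiple of $b_0(t)$: since $\ov F_2(b_0(t))\sim \eta\,\ov F_1^\tau(b_0(t))\sim\eta s^\tau$, the level $s^\tau y$ in the second margin corresponds, under the $\ov F_2$-quantile transform and using $\ov F_2\in\RV$ (which follows because $\ov F_1\in\RV_{-\alpha}$, as forced by (A1)-type reasoning and $\tau$ fixed), to a threshold asymptotic to $b_0(t)\cdot(\eta/ y)^{\cdots}$ — precisely $b_0(t)\,\eta^{1/(\tau\alpha)}y^{-1/(\tau\alpha)}$ after matching regular-variation exponents. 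Substituting both transformed thresholds into the HRV limit and multiplying/dividing by the normalizing $t$, I obtain
\[
\frac{\wh C(sx,s^\tau y)}{\wh C(s,s^\tau)}
\longrightarrow
\frac{\nu_0\bigl((x^{-1/\alpha},\infty]\times(\eta^{1/(\tau\alpha)}y^{-1/(\tau\alpha)},\infty]\bigr)}
     {\nu_0\bigl((1,\infty]\times(\eta^{1/(\tau\alpha)},\infty]\bigr)}
=:T(x,y),
\]
which simultaneously shows that $\wh C(s,s^\tau)=s^{\kappa}\ell(s)$ for some slowly varying $\ell$ (because $\wh C(s,s^\tau)/t^{-1}\to$ the denominator constant, and $t\sim s^{-\kappa}\times$ slowly varying) and identifies $T$ with the claimed formula, with $C$ the reciprocal of that finite, strictly positive denominator. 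Positivity and finiteness of $C$ come from $\nu_0$ being a non-zero Radon measure on $\E_0$ that is finite away from the axes: the set $(1,\infty]\times(\eta^{1/(\tau\alpha)},\infty]$ is bounded away from both axes and has nonempty interior, and asymptotic tail independence (via \Cref{Lemma 2.5}) guarantees $\nu_0$ lives genuinely on $\E_0$ rather than degenerating.

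The main obstacle I anticipate is the bookkeeping around \emph{matching the marginal scalings carefully enough to justify interchanging the quantile transforms with the HRV convergence} — in particular, converting the marginal regular-variation statements $\ov F_1\circ b_0\in\RV_{-\alpha/\alpha_0}$ and $\ov F_2\circ b_0\in\RV_{-\tau\alpha/\alpha_0}$ into uniform control of the inverses $F_i^\leftarrow(1-s^{\cdot})/b_0(t(s))$, and ensuring that the points $(x^{-1/\alpha},\eta^{1/(\tau\alpha)}y^{-1/(\tau\alpha)})$ at which we evaluate $\nu_0$ can be taken to be continuity points (the boundary of an upper-right rectangle has $\nu_0$-measure zero for all but countably many choices, and one passes to the limit by monotonicity/squeezing). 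A secondary technical point is confirming that the hypothesis $\alpha_0/\alpha\ge\tau\ge1$ is exactly what is needed for the exponent $\kappa=\alpha_0/\alpha$ to satisfy the constraints $\kappa\ge\tau$ and $\kappa\ge1$ from the Remark, and that \Cref{Lemma 3.6} is consistent here; these are sanity checks rather than real work. Apart from that, the argument is essentially a change of variables inside the $\M$-convergence in \Cref{def:hrv}.
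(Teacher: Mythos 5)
Your proposal follows essentially the same route as the paper: the paper reduces the theorem to the change-of-variables argument of \citet[Proposition 3.2]{Hua:Joe:Li:2014}, and the two things it actually proves are exactly the two ingredients your sketch rests on, namely (a) $\ov F_1\in\RV_{-\alpha}$ and (b) $\ov F_2^{\leftarrow}(ys^{\tau})/\ov F_1^{\leftarrow}(s)\to\eta^{1/(\tau\alpha)}y^{-1/(\tau\alpha)}$ as $s\downarrow 0$. Your treatment of (b) --- matching $\ov F_2(b_0(t))\sim\eta\,\ov F_1^{\tau}(b_0(t))$ and converting through the quantile functions using regular variation of $\ov F_2^{\leftarrow}$ at $0$ --- is the paper's argument in different clothing (the paper inverts $G_1=1/\ov F_2$ and $G_2=1/(\eta\ov F_1^{\tau})$ via \citet[Proposition 2.6]{Resnick:2007} to get $\ov F_2^{\leftarrow}(s)\sim\ov F_1^{\leftarrow}(\eta^{-1/\tau}s^{1/\tau})$). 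The ``bookkeeping'' you flag is indeed routine: a homogeneous limit measure $\nu_0$ charges no vertical or horizontal line, so the corner rectangles are continuity sets, and a $(1\pm\epsilon)$ sandwich handles the asymptotic (rather than exact) matching of thresholds; positivity and finiteness of the normalizing constant follow since the rectangle contains a square $(c,\infty]^2$, which has positive $\nu_0$-measure by homogeneity and non-degeneracy of $\nu_0$ on $\E_0$, and is bounded away from the axes, hence of finite measure.

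The one step you should not wave away is (a). You justify $\ov F_1\in\RV_{-\alpha}$ by ``(A1)-type reasoning,'' but (A1) is an assumption of the additive Model \ref{additive:model} and plays no role in this theorem; moreover, under the paper's non-standard definition of MRV the margins of $\bZ$ need not be regularly varying at all, so this is a genuine (if short) proof obligation, and it is half of what the paper's proof actually does. The paper settles it by a two-case argument: if $\tau>1$ then $\ov F_2(t)=o(\ov F_1(t))$, and since $\Pr(\max(Z_1,Z_2)>t)\in\RV_{-\alpha}$ one squeezes $1\le \Pr(\max(Z_1,Z_2)>t)/\ov F_1(t)\le 1+\ov F_2(t)/\ov F_1(t)\to 1$, forcing $\ov F_1\in\RV_{-\alpha}$; if $\tau=1$, MRV on $\E$ forces at least one margin to be $\RV_{-\alpha}$, and the tail-equivalence $\eta\in(0,\infty)$ transfers it to the other. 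With that argument supplied (and it is needed before you may speak of $\ov F_1^{\leftarrow}\in\RV_{-1/\alpha}$ at $0$ or of $\ov F_2\in\RV_{-\tau\alpha}$), your argument is complete and coincides in substance with the paper's.
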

\begin{proof}
The proof follows with similar arguments as in  \citet[Proposition 3.2]{Hua:Joe:Li:2014} if we can show the following:
\begin{enumerate}
    \item[(a)] $\ov F_1\in\RV_{-\alpha}$.
    \item[(b)] If for any $y>0$ and $\epsilon>0$ small there exists an $s_0>0$ such that for any $0<s<s_0$ the inequality
    \begin{eqnarray*}
        (1-\epsilon)\eta^{1/(\tau\alpha)}y^{-1/(\tau\alpha)}<\frac{\ov F_2^{\leftarrow}(ys^{\tau})}{\ov F_1^{\leftarrow}(s)}<(1+\epsilon)\eta^{1/(\tau\alpha)}y^{-1/(\tau\alpha)}
    \end{eqnarray*}
    holds.
    \end{enumerate}
We prove (a) and (b) in the following.
\begin{enumerate}
    \item[(a)] \, Case 1: $\tau>1$. Then $\lim_{t\to\infty}\ov F_2(t)/\ov F_1(t)=0$. Since $\bZ=(Z_1,Z_2)\in \MRV(\alpha, b,\nu, \E)$
    in particular, \linebreak $\max(Z_1,Z_2)\in\RV_{-\alpha}$. Moreover,
    \begin{eqnarray*}
        1\leq \frac{\Pr(\max(Z_1,Z_2)>t)}{\ov F_1(t)}\leq 1+\frac{\ov F_2(t)}{\ov F_1(t)}\stackrel{t\to\infty}{\to}1.
    \end{eqnarray*}
    Hence, $\ov F_1\in\RV_{-\alpha}$.\\
    Case 2: $\tau=1$.  Since $\bZ=(Z_1,Z_2)\in \MRV(\alpha, b,\nu, \E)$  either $\ov F_1\in\RV_{-\alpha}$ or $\ov F_2\in\RV_{-\alpha}$. Since \linebreak
    $\lim_{t\to\infty}\ov F_{2}(t)/\ov F_{1}(t)=\eta\in(0,\infty)$ we have that $\ov F_1$ and $\ov F_2$ are tail-equivalent and in particular, both are $\RV_{-\alpha}$.
    \item[(b)] \, Let $y>0$. Define $G_1(x)=1/\overline{F}_2(x)$ and $G_2(x)=1/\{\eta\overline F_1^\tau(x)\}$. Then $G_1(x)\sim G_2(x)$ as $x\to\infty$, $G_1,G_2\in\RV_{\tau\alpha}$
    and both are non-decreasing.
    Using \citet[Proposition 2.6]{Resnick:2007} we have $G_1^{\leftarrow},G_2^{\leftarrow}\in\RV_{1/(\alpha\tau)}$ and $G_1^{\leftarrow}(z)\sim G_2^{\leftarrow}(z)$
    as $z\to\infty$. Here $G_1^{\leftarrow}(z)=\overline{F}_2^{\leftarrow}(1/z)$ and $G_2^{\leftarrow}(z)=\overline{F}_1^{\leftarrow}(1/(\eta z)^{1/\tau})$. Hence,
    \begin{eqnarray} \label{s1}
        \overline{F}_2^{\leftarrow}(s)\sim \overline{F}_1^{\leftarrow}(\eta^{-1/\tau}s^{1/\tau}) \quad \text{ as } s\downarrow 0.
    \end{eqnarray}
Therefore,
    \begin{eqnarray} \label{s2}
        \frac{\ov F_2^{\leftarrow}(ys^{\tau})}{\ov F_1^{\leftarrow}(s)} =\frac{\ov F_2^{\leftarrow}(ys^{\tau})}{\ov F_1^{\leftarrow}(\eta^{-1/\tau}y^{1/\tau}s)}
            \frac{\ov F_1^{\leftarrow}(\eta^{-1/\tau}y^{1/\tau}s)}{\ov F_1^{\leftarrow}(s)}\stackrel{s\downarrow 0}{\to} \eta^{1/(\tau\alpha)}y^{-1/(\tau\alpha)},
    \end{eqnarray}
   where  the first term converges to $1$ due to \eqref{s1} and the second term converges to  $\eta^{1/(\tau\alpha)}y^{-1/(\tau\alpha)}$ since  $\ov F_1^{\leftarrow}$ is regularly varying with index $-1/\alpha$ near $0$. Hence, (b) holds.
   \end{enumerate}
\end{proof}
\begin{Remark}
Note that in the presence of \emph{hidden regular variation} as above the pair $(b_0,\nu_0)$ is not exactly uniquely defined since if $\bZ\in \HRV(\alpha_0, b_0,\nu_0)$
then $\bZ\in \HRV(\alpha_0, Cb_0,C^{-\alpha_0}\nu_0)$ for $0<C<\infty$ as well. But we can consider this to be uniqueness up to a scale.
\end{Remark}

The converse of \Cref{Lemma:Hua:Joe:Li:1} also holds; this is an extension of {\citet[Proposition 3.3]{Hua:Joe:Li:2014}} for $\tau=1$.

\begin{Theorem}
 \label{Lemma 2.9}
Let $\bZ\in \left[0,\infty\right)^2$ with continuous margins $F_1, F_2$, survival copula $\wh C$, $\E|Z_1|<\infty$ and
 $\ov F_{1}\in\RV_{-\alpha}$. Suppose  $\wh C$ has upper tail order pair $(\kappa,\tau)$ with $\kappa\geq \tau\geq 1$ and some slowly varying function $\ell$ at $0$ with \linebreak
  $\lim_{s\downarrow 0}s^{\kappa-1}\ell(s)=0$ satisfying \eqref{eq:utpair} and \eqref{eq:Txy}.
Moreover, assume that
$\lim_{t\to\infty}\ov F_{2}(t)/\ov F_{1}^\tau(t)=\eta\in(0,\infty)$.  Then \linebreak $\bZ\in \MRV(\alpha, b,\nu, \E)\cap \HRV(\alpha_0, b_0,\nu_0)$ with $\alpha_0=\alpha\kappa$,
\begin{eqnarray*}
    \nu_0\left(\left(x,\infty\right]\times\left(y,\infty\right]\right)=T(x^{-\alpha},\eta^{\tau\alpha}y^{-\tau\alpha}) \quad \text{for } x,y>0,
\end{eqnarray*}
and some properly chosen $b_0\in\RV_{1/\alpha_0}$.
\end{Theorem}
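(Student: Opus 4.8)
The plan is to run the change of variables between the survival copula $\wh C$ and the joint survival function of $\bZ$ backwards, mirroring the proof of \Cref{Lemma:Hua:Joe:Li:1} (and of \citet[Proposition~3.3]{Hua:Joe:Li:2014}, which is the case $\tau=1$). Two preparatory facts come first. From $\ov F_2(t)\sim\eta\,\ov F_1(t)^\tau$ together with $\ov F_1\in\RV_{-\alpha}$ one obtains $\ov F_2\in\RV_{-\tau\alpha}$ and, inverting regularly varying functions via \citet[Proposition~2.6]{Resnick:2007} exactly as in the derivation of \eqref{s1}, $\ov F_2^{\leftarrow}(s)\sim\ov F_1^{\leftarrow}(\eta^{-1/\tau}s^{1/\tau})$ as $s\downarrow 0$; equivalently $\ov F_2(uy)\sim\eta\,y^{-\tau\alpha}\ov F_1(u)^\tau$ as $u\to\infty$, for each $y>0$. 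Next one pins down the hidden scaling: put $h(u):=\ov F_1(u)^\kappa\,\ell(\ov F_1(u))$. Since $\ell$ is slowly varying at $0$ and $\ov F_1\downarrow 0$, the composition $\ell\circ\ov F_1$ is slowly varying at $\infty$, so $h\in\RV_{-\alpha\kappa}$, and $h(u)=\ov F_1(u)\cdot\ov F_1(u)^{\kappa-1}\ell(\ov F_1(u))\to 0$ because $\lim_{s\downarrow 0}s^{\kappa-1}\ell(s)=0$. Choose $b_0\in\RV_{1/(\alpha\kappa)}$ to be an asymptotic inverse of $1/h\in\RV_{\alpha\kappa}$ (again \citet[Proposition~2.6]{Resnick:2007}), so that $t\,h(b_0(t))\to 1$, and set $\alpha_0:=\alpha\kappa$; note $\alpha_0\ge\alpha$ since $\kappa\ge\tau\ge1$.

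For $\bZ\in\MRV(\alpha,b,\nu,\E)$ I would take $b\in\RV_{1/\alpha}$ with $t\,\ov F_1(b(t))\to 1$ and test on the generating $\pi$-system of complements of rectangles. By inclusion--exclusion, $t\,\Pr\big(\bZ\in([0,b(t)x]\times[0,b(t)y])^c\big)=t\,\ov F_1(b(t)x)+t\,\ov F_2(b(t)y)-t\,\wh C(\ov F_1(b(t)x),\ov F_2(b(t)y))$: the first summand tends to $x^{-\alpha}$, the second to $\eta\,y^{-\alpha}$ if $\tau=1$ and to $0$ if $\tau>1$, and the cross term, written as $t\,\wh C(s,s^\tau c)$ with $s=\ov F_1(b(t)x)\sim x^{-\alpha}/t$ and $c>0$ a constant, is $\sim t\,s^\kappa\ell(s)\,T(1,c)\to 0$ (because $t\,s^\kappa\ell(s)\asymp t^{1-\kappa}\ell(x^{-\alpha}/t)\to 0$, using $\kappa>1$ or $\ell\to 0$ when $\kappa=1$; monotonicity of $\wh C$ absorbs the asymptotic equivalences). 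Hence $\bZ\in\MRV(\alpha,b,\nu,\E)$ with $\nu$ concentrated on the coordinate axes.

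The core of the argument is the $\M$-convergence on $\E_0=(0,\infty)^2$. Fix $x,y>0$, set $s_t:=\ov F_1(b_0(t))\downarrow 0$, and use $\Pr(Z_1>b_0(t)x,Z_2>b_0(t)y)=\wh C(\ov F_1(b_0(t)x),\ov F_2(b_0(t)y))$ together with $\ov F_1(b_0(t)x)\sim x^{-\alpha}s_t$ and $\ov F_2(b_0(t)y)\sim\eta\,y^{-\tau\alpha}s_t^\tau$. Absorbing the $(1+o(1))$ factors by monotonicity of $\wh C$ in each coordinate -- sandwiching between $\wh C\big((1\mp\epsilon)x^{-\alpha}s_t,(1\mp\epsilon)\eta\,y^{-\tau\alpha}s_t^\tau\big)$ and letting $\epsilon\downarrow 0$ at continuity points of $T$ -- and applying the defining limit \eqref{eq:Txy} gives $\Pr(Z_1>b_0(t)x,Z_2>b_0(t)y)\sim s_t^\kappa\ell(s_t)\,T(x^{-\alpha},\eta\,y^{-\tau\alpha})$. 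Since $t\,s_t^\kappa\ell(s_t)=t\,h(b_0(t))\to 1$, multiplying by $t$ yields $t\,\Pr\big(\bZ/b_0(t)\in(x,\infty]\times(y,\infty]\big)\to T(x^{-\alpha},\eta\,y^{-\tau\alpha})$ for all $x,y>0$, which identifies $\nu_0$ on the generating rectangles (it is the inverse of the change of variables in \Cref{Lemma:Hua:Joe:Li:1}, with the normalizing constant there equal to $1$ for this $b_0$). Promoting this to $\stackrel{\M}{\to}$ in $\M(\E_0)$ then follows from the Portmanteau criterion in \citet[Theorem~2.1]{lindskog:resnick:roy:2014}, as in \Cref{Lemma:Hua:Joe:Li:1}, so $\bZ\in\MRV(\alpha_0,b_0,\nu_0,\E_0)$. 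Finally $b(t)/b_0(t)\to\infty$: since $t\,\ov F_1(b_0(t))=t\,h(b_0(t))/\big(\ov F_1(b_0(t))^{\kappa-1}\ell(\ov F_1(b_0(t)))\big)\to\infty$ whereas $t\,\ov F_1(b(t))\to 1$, monotonicity and regular variation of $\ov F_1$ force $b_0(t)/b(t)\to 0$; together with $b_0\in\RV_{1/\alpha_0}$ and $\alpha_0=\alpha\kappa\ge\alpha$, this is exactly $\bZ\in\MRV(\alpha,b,\nu,\E)\cap\HRV(\alpha_0,b_0,\nu_0)$ in the sense of \Cref{def:hrv}.

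I expect the $\M$-convergence on $\E_0$ to be the main obstacle: besides the $\epsilon$--continuity-set bookkeeping in the Portmanteau step, one must check that the pointwise limits on rectangles are consistent with a genuine non-zero Radon measure on $\E_0$, and it is precisely the hypothesis $\lim_{s\downarrow 0}s^{\kappa-1}\ell(s)=0$ that keeps the limit from degenerating (the same role it plays already in the $\tau=1$ argument of \citet[Proposition~3.3]{Hua:Joe:Li:2014}). The remaining points -- the inversion \eqref{s1} that handles the non-equivalent marginal scales, and the observation that $\E|Z_1|<\infty$ is not needed for the present conclusion (it is carried only for the later applications to $\MME$ and $\MES$) -- are routine.
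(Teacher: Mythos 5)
Your argument is correct and is essentially the proof the paper intends: the paper's own proof is a one-line reference to \citet[Proposition 3.2]{Hua:Joe:Li:2014} together with the marginal inversion \eqref{s2}, and your write-up simply carries out that change of variables in detail (choosing $b_0$ through $u\mapsto \ov F_1(u)^{\kappa}\ell(\ov F_1(u))$, sandwiching with the monotonicity of $\wh C$ to invoke \eqref{eq:Txy}, Portmanteau on rectangles), including the correct observations that $b(t)/b_0(t)\to\infty$ comes from $s^{\kappa-1}\ell(s)\to 0$ and that $\E|Z_1|<\infty$ is not needed for this statement. One remark: the limit you obtain, $\nu_0\left(\left(x,\infty\right]\times\left(y,\infty\right]\right)=T(x^{-\alpha},\eta\, y^{-\tau\alpha})$, is the one consistent with inverting \Cref{Lemma:Hua:Joe:Li:1} (put $u=x^{-1/\alpha}$, $v=\eta^{1/(\tau\alpha)}y^{-1/(\tau\alpha)}$), so the factor $\eta^{\tau\alpha}$ in the displayed statement appears to be a typo rather than a flaw in your proof, since rescaling $b_0$ only multiplies $\nu_0$ by a constant and cannot produce that form for non-product $T$.
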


\begin{proof}
The proof follows similar arguments as that of \citet[Proposition 3.2]{Hua:Joe:Li:2014} using \eqref{s2} and is omitted here.
\end{proof}

\begin{Remark}
It is possible to have $\kappa=\tau=1$ as well  but then $\lim_{s\downarrow 0}\ell(s)=0$ which excludes the case of asymptotic upper tail dependence.

\end{Remark}
A conclusion from these results is that  hidden regular variation is not only the effect of  the copula of the joint distribution but also of the ratio of the
individual marginal tails.
This may seem surprising since copulas, in theory, are supposed to decouple the marginal distributions from  the dependence
structure of random vectors. Clearly, this is not the case for tail dependence especially for regularly varying tails as observed here.

\section{Asymptotic behavior of risk measures} \label{sec:MMEMES}

The asymptotic behavior of the MME and the MES under  hidden regular variation were obtained in  \citet{DasFasen2017}.
 Under these constraints consistent estimators for MME and MES were derived; moreover \citet{CaiMusta2017}  have also shown asymptotic normality for MES.
 In this section we present a variety of examples of model classes satisfying the assumptions of  \citet{DasFasen2017} and relate these
assumptions in particular, to copula models.
We recall in brief the results from \cite{DasFasen2017} and then present additive models in \Cref{Section 4.1}
and copula models in \Cref{Section 4.2} satisfying these assumptions.  Eventually, we note that the assumption of HRV is not always necessary;
using the theory of copulas we  generalize the results of \cite{DasFasen2017}
beyond any assumption of HRV.

\begin{Theorem}[\citet{DasFasen2017}, Theorems 1 and 2]\label{thm:MES}\label{theorem:4.1}
Let $\bZ =(Z_1, Z_2)\in \MRV({\alpha}, b, \nu, \E)\cap $ \linebreak $\HRV({\alpha_0}, b_0, \nu_0, \E_0)$ be in  $ \left[0,\infty\right)^2$  with $\alpha_0\ge \alpha\geq 1$ and $\E|Z_1|<\infty$.
\begin{itemize}
    \item[(a)]  Suppose the following condition holds:
        \begin{equation} \label{C1}
          \lim_{M\to\infty}\lim_{t\to\infty}\int_{M}^\infty \frac{\Pr(Z_1>xt, Z_2>t)}{\Pr(Z_1>t, Z_2>t)}\, \mathrm dx=0.  \tag{\text{\rm  B}}
        \end{equation}
       Then
        \begin{align*}
            \lim\limits_{p\downarrow 0} \frac{pb_0^{\la}\{\VaR_{1-p}(Z_2)\}}{\VaR_{1-p}(Z_2)} \MME(p) = \int_1^{\infty} \nu_0((x,\infty)\times(1,\infty))\;\mathrm dx.
        \end{align*}
        Moreover, $0<\int_1^{\infty}\nu_0((x,\infty)\times(1,\infty))\;\mathrm dx<\infty$.
    \item[(b)] Suppose the following condition holds:
     \begin{equation} \label{cond:dct1}
          \lim_{M\to\infty}\lim_{t\to\infty}\left[\int_{M}^\infty +\int_0^{1/M}\right] \frac{\Pr(Z_1>xt, Z_2>t)}{\Pr(Z_1>t, Z_2>t)}\, \mathrm dx=0.  \tag{\text{\rm C}}
     \end{equation}
     Then
     \begin{align*}
        \lim\limits_{p\downarrow 0} \frac{pb_0^{\la}\{\VaR_{1-p}(Z_2)\}}{\VaR_{1-p}(Z_2)} \MES(p) = \int_0^{\infty} \nu_0((x,\infty)\times(1,\infty))\;\mathrm dx.
       \end{align*}
Moreover, $0<\int_0^{\infty}\nu_0((x,\infty)\times(1,\infty))\;\mathrm dx<\infty$.
\end{itemize}
\end{Theorem}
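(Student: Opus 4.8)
The plan is to reduce each risk measure to an integral of the joint survival function, apply hidden regular variation after rescaling, and then use conditions (B) and (C) to kill the contribution of the two integration endpoints. First I would set $u=u(p):=\VaR_{1-p}(Z_2)$; continuity of $F_2$ gives $\Pr(Z_2>u)=p$, and $u\uparrow\infty$ as $p\downarrow 0$ (under $\HRV$ the variable $Z_2$ is necessarily unbounded, for otherwise $\nu_0\equiv 0$). By the layer-cake formula and Tonelli,
\begin{align*}
  p\,\MME(p)=\E\{(Z_1-u)_+\1_{\{Z_2>u\}}\}=\int_0^\infty\Pr(Z_1>u+s,Z_2>u)\,\mathrm ds=u\int_1^\infty\Pr(Z_1>ux,Z_2>u)\,\mathrm dx,
\end{align*}
and similarly $p\,\MES(p)=u\int_0^\infty\Pr(Z_1>ux,Z_2>u)\,\mathrm dx$ (finite because $\E|Z_1|<\infty$). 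Writing $\psi_u(x):=b_0^\la(u)\,\Pr(Z_1>ux,Z_2>u)$, these become the exact identities
\begin{align*}
  \frac{p\,b_0^\la(u)}{u}\,\MME(p)=\int_1^\infty\psi_u(x)\,\mathrm dx,\qquad\frac{p\,b_0^\la(u)}{u}\,\MES(p)=\int_0^\infty\psi_u(x)\,\mathrm dx,
\end{align*}
so the theorem reduces to passing to the limit $u\to\infty$ under these two integrals.

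Next I would establish the pointwise limit $\psi_u(x)\to g(x):=\nu_0((x,\infty)\times(1,\infty))$ as $u\to\infty$, for a.e.\ $x>0$. Put $t=b_0^\la(u)$; then $t\to\infty$ and $b_0(t)/u\to 1$ by the standard inversion asymptotics for regularly varying $b_0$. For $x>0$ and small $\varepsilon>0$, eventually in $u$,
\begin{align*}
  t\,\Pr\!\big(\bZ/b_0(t)\in(x(1+\varepsilon),\infty)\times(1+\varepsilon,\infty)\big)\le\psi_u(x)\le t\,\Pr\!\big(\bZ/b_0(t)\in(x(1-\varepsilon),\infty)\times(1-\varepsilon,\infty)\big),
\end{align*}
and choosing $x$ and $\varepsilon$ so that both rectangles are $\nu_0$-continuity sets (possible for a.e.\ $x$ and a.e.\ $\varepsilon$), the $\HRV$ convergence in $\M(\E_0)$ followed by $\varepsilon\downarrow 0$ yields the claim; the monotonicity of $g$, hence its a.e.\ continuity, is exactly what lets the sandwich collapse. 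The same argument at $x=1$ gives $\psi_u(1)\to\nu_0((1,\infty)^2)$, which lies in $(0,\infty)$: finite because $(1,\infty)^2$ is bounded away from the axes, and positive because $\nu_0((2,\infty)\times(1,\infty))=0$ would force, via homogeneity and $(2c,\infty)\times(c,\infty)\uparrow\E_0$ as $c\downarrow 0$, that $\nu_0\equiv 0$.

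Finally I would fix $M>1$ and split the integrals. On $[1/M,M]$ one has $\psi_u(x)\le\psi_u(1/M)\to\nu_0((1/M,\infty)\times(1,\infty))<\infty$ (and $\psi_u(1)$ bounds $\psi_u$ on $[1,M]$), so bounded convergence gives $\int_{1/M}^M\psi_u\to\int_{1/M}^M g$. For the tail I would factor
\begin{align*}
  \int_M^\infty\psi_u(x)\,\mathrm dx=\psi_u(1)\int_M^\infty\frac{\Pr(Z_1>ux,Z_2>u)}{\Pr(Z_1>u,Z_2>u)}\,\mathrm dx,
\end{align*}
whose $\limsup_{u\to\infty}$ tends to $0$ as $M\to\infty$ by condition (B), the prefactor converging to the finite constant $\nu_0((1,\infty)^2)$; this settles (a), with finiteness of $\int_1^\infty g$ read off via Fatou and positivity from the previous paragraph. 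For (b) I would dispose of the near-origin piece in the same manner,
\begin{align*}
  \int_0^{1/M}\psi_u(x)\,\mathrm dx=\psi_u(1)\int_0^{1/M}\frac{\Pr(Z_1>ux,Z_2>u)}{\Pr(Z_1>u,Z_2>u)}\,\mathrm dx,
\end{align*}
whose double limit vanishes by the $\int_0^{1/M}$ term of condition (C); assembling the three pieces and letting $M\to\infty$ gives the $\MES$ statement.

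The hard part will be this endpoint control: conditions (B) and (C) are tailor-made so that the large-$x$ (and, for $\MES$, the small-$x$) tail of $\int\psi_u$ is uniformly negligible, and the substantive step is the factorization through $\Pr(Z_1>u,Z_2>u)$ that converts those hypotheses into the required domination, together with the bookkeeping of the iterated limits $u\to\infty$ then $M\to\infty$. A minor technical nuisance is the drifting-threshold sandwich in the pointwise step, where $u/b_0(b_0^\la(u))\to 1$ rather than equalling $1$; this is routine given regular variation of $b_0$ and the a.e.\ continuity of $\nu_0$ on coordinate rectangles.
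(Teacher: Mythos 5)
Your proposal is, in outline, the intended argument; note that the paper does not prove this theorem at all — it is imported from \citet{DasFasen2017} (Theorems 1 and 2) — and the proof there proceeds along the same route you take: write $p\,\MME(p)$ and $p\,\MES(p)$ as $u\int \Pr(Z_1>ux,Z_2>u)\,\mathrm dx$ with $u=\VaR_{1-p}(Z_2)$, identify the pointwise limit $\nu_0((x,\infty)\times(1,\infty))$ from the hidden regular variation after replacing $u$ by $b_0(b_0^\la(u))$, and use conditions (B) and (C), via the factorization through $\Pr(Z_1>u,Z_2>u)$, to make the endpoint contributions uniformly negligible in the iterated limit. One small caveat: the identity $\Pr(Z_2>u(p))=p$ needs continuity of $F_2$, which is not among the hypotheses of the theorem as stated (though it is assumed in the surrounding results of the paper); without it you should argue $p\sim\Pr(Z_2>u(p))$ or add the assumption explicitly.

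The one step that is genuinely under-justified is the collapse of your sandwich. Letting $\varepsilon\downarrow0$ the upper bound tends to $\nu_0([x,\infty)\times[1,\infty))$, and its discrepancy with $g(x)=\nu_0((x,\infty)\times(1,\infty))$ consists of the vertical segment $\{x\}\times[1,\infty)$ \emph{and} the horizontal segment $(x,\infty)\times\{1\}$. Monotonicity (hence a.e.\ continuity) of $g$ only kills the vertical piece; if $\nu_0$ charged the line at height $1$, the horizontal piece would be positive for an entire interval of $x$-values, so ``a.e.\ $x$'' does not rescue the step, and the same issue appears at $x=1$, where you invoke the exact limit $\psi_u(1)\to\nu_0((1,\infty)^2)$. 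You need to rule out $\nu_0$-mass on horizontal (and vertical) lines, which follows in two lines from homogeneity: if $\nu_0((a,\infty)\times\{1\})=m>0$, then the dilated sets $(ca,\infty)\times\{c\}$ for $c\in[1,2]$ are pairwise disjoint subsets of $(a,\infty)\times[1,2]$, a set of finite $\nu_0$-measure, each of measure $c^{-\alpha_0}m\ge 2^{-\alpha_0}m>0$ — a contradiction. With this lemma inserted, every rectangle $(x,\infty)\times(y,\infty)$ with $x,y>0$ is a $\nu_0$-continuity set and your pointwise step holds for every $x$; the remainder of your argument (the constant bound on $[1/M,M]$, the use of (B)/(C) for the tails, and the positivity and finiteness of the limiting integrals via homogeneity and the decomposition with fixed $M$) is correct.
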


Clearly, condition \eqref{cond:dct1} implies condition \eqref{C1}. In the face of it,  it appears that the rate of increase of  MES which is governed  by the function
$ pb_0^{\la}\{\VaR_{1-p}(Z_2)\}/\VaR_{1-p}(Z_2)$ is determined by the tail behavior of the marginal distribution $F_2$. However, we notice in \Cref{Section 4.2} that this is not true for MES; the rate is in fact governed by the joint tail behavior of the  copula of $(Z_1,Z_2)$ and that of the marginal tail of $F_1$.

\begin{Remark}  \label{Remark 4.2}$\mbox{}$ Define the function governing the limit behavior of $\MES(p)$ and $\MME(p)$ in Theorem \ref{theorem:4.1} as
\begin{eqnarray} \label{function a}
     a(t): =  \frac{b_0^{\la}\{\VaR_{1-1/t}(Z_2)\}}{t\VaR_{1-1/t}(Z_2)}.
\end{eqnarray}
\begin{enumerate}[(1)]
     \item From \citet[Remark 6]{DasFasen2017} we know that under condition \eqref{cond:dct1} the limit $\lim_{p\downarrow 0} a(1/p)=0$
        is valid and hence, under the conditions of Theorem \ref{theorem:4.1}, $\lim_{p\downarrow 0}\MES(p)=\infty$. Thus, even under the presence of asymptotic
        upper tail independence, the tail dependence
        is still strong enough for $\lim_{p\downarrow 0}\MES(p)=\infty$. In contrast, if $Z_1$ and $Z_2$ are independent
        we have $\MES(p)=\E(Z_1)$ (then condition \eqref{cond:dct1} is not satisfied).
   \item Consider the case $\overline{F}_{Z_2}\in\RV_{-\alpha^*}$. Then $(Z_1, Z_2)\in \MRV({\alpha}, b, \nu, \E)$ implies $\alpha^*\geq \alpha$.
    Furthermore, if additionally $(Z_1, Z_2) \in\MRV({\alpha_0}, b_0, \nu_0, \E_0)$ then $\alpha^*\leq \alpha_0$ as well (see  \cite[Lemma 2.7]{DasFasen2017}).
    In this case,
        $a(t) \in\RV_{(\alpha_0-\alpha^*-1)/\alpha^*}$.  Therefore,
         a necessary condition for   $\lim_{p\downarrow 0} a(1/p)=0$ is $\alpha_0\leq \alpha^*+1$ and a sufficient condition is
        $\alpha_0<\alpha^*+1$. Finally, $\alpha_0\leq\alpha^*+1$ is as well a necessary assumption for \eqref{cond:dct1}.
     \item If $Z_1$ and $Z_2$ are independent then $\alpha_0=\alpha+\alpha^*$ and hence, $\alpha\geq 1$ and $\alpha_0<\alpha^*+1$ is not possible. Thus, the independent case
        does not satisfy \eqref{cond:dct1} and a scaled limit for $\MES(p)$ cannot be calculated using Theorem \ref{theorem:4.1}.
    \item Under condition \eqref{C1} both $\lim_{p\downarrow 0} a(1/p)=\infty$ and $\lim_{p\downarrow 0}\MME(p)=0$ are
        possible. For the independent  margin case the asymptotic behavior of $\MME(p)$ can still be calculated using Theorem \ref{theorem:4.1}, since with $\alpha>1$ and $Z_1,Z_2$
        independent, condition \eqref{C1} is satisfied.
\end{enumerate}
\end{Remark}

\subsection{Asymptotic behavior of MME and MES for additive models} \label{Section 4.1}
In Section \ref{subsec:addmod},  we introduced with Model \ref{additive:model} a general additive model for multivariate regular variation and discussed in Theorem \ref{theorem:3.1} the
existence of hidden regular variation.  Do such models satisfy conditions \eqref{cond:dct1} and hence \eqref{C1} as well? The
following result provides an answer.


\begin{Theorem}  \label{theorem:5.3}
Let $\bZ=\bY+\bV$ be as in Model \ref{additive:model}. Suppose that $\E|Z_1|<\infty$  and
 $\alpha\leq  \alpha_0< 1+\alpha^*$.
Then the following models satisfy condition \eqref{cond:dct1}:
\begin{itemize}
    \item[(a)] $\bZ=(Z_1,Z_2) $.
    \item[(b)] $\bZ^+=(Z_1+Z_2,Z_1)$ and $\wt\bZ^+=(Z_1,Z_1+Z_2)$ if $Y_2\equiv0$.
    \item[(c)] $\bZ^{\min}=(Z_1,\min(Z_1,Z_2))$  if $\liminf_{t\to\infty}\Pr(Y_1>t)/\Pr(Y_2>t)>0$.
    \item[(d)] $\bZ^{\max}=(\max(Z_1,Z_2),Z_2)$ and $\wt\bZ^{\max}=(Z_2,\max(Z_1,Z_2))$ if  $Y_2\equiv0$.
    \end{itemize}
    As a consequence, the asymptotic limits of the  scaled $\MME(p)$ and $\MES(p)$ can be obtained in each case using Theorem \ref{theorem:4.1}.
\end{Theorem}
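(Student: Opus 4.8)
The plan is to verify condition \eqref{cond:dct1} directly for each of the listed vectors and then quote \Cref{theorem:4.1}. Because $\min$, $\max$ and $+$ act simply on tail events, the whole problem collapses to a single estimate for the sum $\bZ=\bY+\bV$ of case (a). For $\bZ^{\min}=(Z_1,\min(Z_1,Z_2))$ one has $\{\min(Z_1,Z_2)>t\}=\{Z_1>t,Z_2>t\}$, so the integrand in \eqref{cond:dct1} equals $1$ on $(0,1]$ (whence $\int_0^{1/M}$ contributes $1/M\to0$) and coincides with the case-(a) integrand on $(M,\infty)$. For $\bZ^+$, $\wt\bZ^+$, $\bZ^{\max}$, $\wt\bZ^{\max}$ — all with $Y_2\equiv0$, which forces $\ov F_{Z_1}\in\RV_{-\alpha}$ and, since $\bV$ is not asymptotically tail independent (A2), $\ov F_{Z_2}=\ov F_{V_2}\in\RV_{-\alpha_0}$ with $\alpha_0>\alpha$ — the inclusions $a+b>\theta\Rightarrow a>\theta/2$ or $b>\theta/2$ and $\max(a,b)>\theta\iff a>\theta$ or $b>\theta$ write the numerator as a sum of the case-(a) joint tail and marginal tails $\ov F_{Z_1}(x\theta/2)$, $\ov F_{Z_2}(x\theta/2)$, divided by the normalizer supplied by \Cref{theorem:3.1} (namely $\Pr(Z_1>t,Z_2>t)$, $\ov F_{Z_1}(t)$, or $\ov F_{Z_2}(t)$). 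The marginal pieces are dispatched by Potter's theorem (using $\alpha>1$, resp. $\alpha_0>1$) on $(M,\infty)$ and by the trivial bound $\le1$, or finiteness of the relevant first moment (valid as $\alpha,\alpha_0>1$), on $(0,1/M)$. Hence it suffices to treat $\bZ=\bY+\bV$.

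\emph{The case $\bZ=\bY+\bV$ (assume $\alpha>1$, so $\alpha_0>1$).} Put $c(t):=\Pr(Z_1>t,Z_2>t)$; by \Cref{theorem:3.1}(a), $\bZ\in\HRV(\alpha_0,b_0,\nu_0)$ with $\nu_0((1,\infty)^2)\in(0,\infty)$, hence $c\in\RV_{-\alpha_0}$ and $\Pr(\|\bV\|>\eta t)/c(t)$ has a finite limit for each $\eta>0$. For the tail at infinity, $\{Z_1>xt\}\subseteq\{Y_1>xt/2\}\cup\{V_1>xt/2\}$ and $\{Z_2>t\}\subseteq\{Y_2>t/2\}\cup\{V_2>t/2\}$, together with the independences (A1), (A3), give for $x\ge1$
\[\Pr(Z_1>xt,Z_2>t)\le\Pr(\|\bV\|>\tfrac{xt}{2})+\ov F_{Y_1}(\tfrac{xt}{2})\,\Pr(\|\bV\|>\tfrac t2)+\ov F_{Y_1}(\tfrac{xt}{2})\,\ov F_{Y_2}(\tfrac t2).\]
Dividing by $c(t)$ and using Potter's theorem (\citet[Theorem 1.5.6]{bingham:goldie:teugels:1989}): the first term is $\le Cx^{-\alpha_0+\epsilon}$, which is integrable over $(M,\infty)$; the second is $\le Cx^{-\alpha+\epsilon}o_t(1)$ because $\Pr(\|\bY\|>t)\Pr(\|\bV\|>t/2)/c(t)\in\RV_{-\alpha}\to0$; the third is $\le Cx^{-\alpha+\epsilon}o_t(1)$ because $\Pr(\|\bY\|>t)\ov F_{Y_2}(t/2)/c(t)\in\RV_{-(\alpha+\alpha^*-\alpha_0)}\to0$ by (A4). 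Choosing $\epsilon<\alpha-1$ gives $\lim_{M\to\infty}\lim_{t\to\infty}\int_M^\infty=0$; for $\alpha=1$ one replaces Potter's bound on the first term by Karamata's theorem and $\E|Z_1|<\infty$.

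\emph{The tail at $0$.} Fix $\eta\in(0,1)$ and split $\{Z_1>xt,Z_2>t\}$ by whether $\|\bV\|>\eta t$. On the complement $V_2\le\eta t$, so $Z_2=Y_2+V_2>t$ forces $Y_2>(1-\eta)t$, and since $Y_2$ is independent of $Z_1=Y_1+V_1$ (by (A1), (A3)),
\[\Pr(Z_1>xt,Z_2>t)\le\Pr(\|\bV\|>\eta t)+\Pr(Z_1>xt)\,\ov F_{Y_2}((1-\eta)t).\]
Integrating $u=xt$ over $(0,t/M)$ and using $\E Z_1<\infty$,
\[\int_0^{1/M}\frac{\Pr(Z_1>xt,Z_2>t)}{c(t)}\,\mathrm dx\le\frac1M\cdot\frac{\Pr(\|\bV\|>\eta t)}{c(t)}+\frac{\E[Z_1]\,\ov F_{Y_2}((1-\eta)t)}{t\,c(t)}.\]
As $t\to\infty$ the first summand tends to $\kappa_\eta/M$ (finite) and the second is $\lesssim t^{\alpha_0-1-\alpha^*}\to0$ precisely because $\alpha_0<1+\alpha^*$; letting $M\to\infty$ establishes \eqref{cond:dct1}, and \Cref{theorem:4.1} then produces the claimed limits of $\MME(p)$ and $\MES(p)$.

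\emph{Main obstacle.} The delicate part is $\int_0^{1/M}$: the integrand converges for each fixed $x$, but the convergence fails uniformly near $x=0$ (the integrand spikes like $\ov F_{Z_2}(t)/c(t)\to\infty$), so dominated convergence is not available. Splitting the joint tail into a ``$\bV$ is large'' part (absorbed by the factor $1/M$) and a ``$Y_2$ is large'' part (absorbed by $\E|Z_1|<\infty$ and the index gap $1+\alpha^*-\alpha_0>0$) is exactly what the extra hypothesis $\alpha_0<1+\alpha^*$ of \Cref{theorem:5.3} buys; carrying the same split through the $\min$/$\max$/$+$ transforms, with the roles of the two coordinates interchanged and with $\E|Z_1|<\infty$ relocated to whichever coordinate plays the role of the response, is the routine but lengthy remainder of the proof.
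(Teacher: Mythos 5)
Your treatment of (a) and (c) is correct, and in substance it parallels the paper even though the packaging differs: the paper proves (a) by factoring the estimate through \Cref{prop:add1} (a four-way split of $\{Z_1>xt,Z_2>t\}$ according to whether $Y_1>x\delta t$ and $Y_2>\delta t$, Potter bounds, and the tail equivalence $\Pr(V_1>t,V_2>t)\asymp\Pr(Z_1>t,Z_2>t)$) together with \Cref{auxiliary:lemma}, which is exactly where $\alpha_0<1+\alpha^*$ enters; your direct two-regime argument (union bound of each $Z_i$ into its $Y$- and $V$-parts on $(M,\infty)$, and the split on $\{\|\bV\|>\eta t\}$ with independence of $Y_2$ from $Z_1$ on $(0,1/M)$) uses the same ingredients and reaches the same estimates, and your explicit $\E|Z_1|<\infty$ fix at $\alpha=1$ is, if anything, more careful than the paper's choice $\epsilon<\alpha-1$. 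Your reduction in (c) (integrand $\equiv1$ for $x\le1$, equal to the case-(a) integrand for $x\ge1$) is exactly the paper's.

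The gap is in (b) and (d). You invoke $\alpha>1$, $\alpha_0>1$ and $\alpha_0>\alpha$, none of which is a hypothesis (only $\E|Z_1|<\infty$, hence $\alpha\ge1$, and $\alpha\le\alpha_0$ are available). More seriously, the claim that the $(0,1/M)$-pieces for the transformed vectors are dispatched by ``the trivial bound $\le1$ or a first moment'' fails for $\wt\bZ^{\max}=(Z_2,\max(Z_1,Z_2))$: there the denominator is $\Pr(Z_2>t)$ and, for $x<1$, the numerator equals $\Pr(Z_2>t)+\Pr(Z_2\in(xt,t],\,Z_1>t)$; since $Y_1$ is independent of $(V_1,V_2)$ and $Z_2=V_2$, the extra term integrated over $(0,1/M)$ is bounded below by a quantity of order $\E(Z_2)\,\Pr(Y_1>t)/\{t\Pr(Z_2>t)\}$, which is regularly varying of index $\alpha_0-\alpha-1$. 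Controlling it requires an index condition of the type $\alpha_0<1+\alpha$ (and a moment of $Z_2$, not of $Z_1$) --- i.e.\ a transformed analogue of \Cref{auxiliary:lemma} that your sketch never identifies, and which the stated bound $\alpha_0<1+\alpha^*$ does not supply once $Y_2\equiv0$. The paper sidesteps re-deriving such estimates altogether: for (b) it rewrites the transformed vector as a fresh instance of Model~\ref{additive:model} with $\bY^+=(Y_1,0)$ and simply reapplies part (a), and for (d) it dominates the $\bZ^{\max}$-integrand pointwise by the integrand for $(Z_1+Z_2,Z_2)$ and quotes (b). Your ``routine but lengthy remainder'' is precisely where the work (or an additional hypothesis) is hiding; note also that for $\wt\bZ^{+}$ the $(M,\infty)$-integrand is exactly $\ov F_{Z_1}(xt)/\ov F_{Z_1}(t)$, so the use of $\alpha>1$ there is not removable by the Karamata device you use in (a) and should at least have been flagged as a restriction rather than asserted.
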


A direct consequence of \Cref{theorem:5.3} is the ability to compute asymptotic limits of $\MES^{+}, \MES^{\min}$ and $\MES^{\max}$ as defined in \eqref{I1} and is
summarized in the following corollary.

\begin{Corollary}
Let $\bZ=\bY+\bV$ be as in Model \ref{additive:model} and $a(t)$ be defined as in \eqref{function a}. Suppose that $\E|Z_1|<\infty$  and
 $\alpha\leq  \alpha_0< 1+\alpha^*$. Then there exist finite constants $K^+,K^{\min},K^{\max}>0$ so that following statements hold.
 \begin{enumerate}
    \item[(a)] $\lim_{p\downarrow 0}a(1/p)\MES^{+}(p)=K^+$ if $Y_2\equiv 0$.
    \item[(b)] $\lim_{p\downarrow 0}a(1/p)\MES^{\min}(p)=K^{\min}$   if $\liminf_{t\to\infty}\Pr(Y_1>t)/\Pr(Y_2>t)>0$
    \item[(c)] $\lim_{p\downarrow 0}a(1/p)\MES^{\max}(p)=K^{\max}$  if $Y_2\equiv 0$.
 \end{enumerate}
\end{Corollary}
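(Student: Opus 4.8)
The plan is to realise each of $\MES^{+}(p)$, $\MES^{\min}(p)$ and $\MES^{\max}(p)$ as an ordinary Marginal Expected Shortfall of a vector built from $\bZ=\bY+\bV$, and then to combine \Cref{theorem:5.3} with \Cref{theorem:4.1}(b).

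First I record the elementary identity: for a non-negative bivariate $(W_1,W_2)$, the quantity $\E\{W_1\mid W_2>\VaR_{1-p}(W_2)\}$ is precisely the object appearing on the left in \Cref{theorem:4.1}(b) with $(Z_1,Z_2)$ replaced by $(W_1,W_2)$. Choosing $(W_1,W_2)$ equal to $(Z_1,\,Z_1+Z_2)$, $(Z_1,\,Z_1\wedge Z_2)$ and $(Z_1,\,Z_1\vee Z_2)$ recovers $\MES^{+}(p)$, $\MES^{\min}(p)$ and $\MES^{\max}(p)$, respectively. So the scheme is, for each of these three vectors $(W_1,W_2)$: its first coordinate is $Z_1$, so $\E|W_1|<\infty$ is automatic; \Cref{theorem:3.1} supplies membership in $\MRV(\alpha,b,\cdot,\E)\cap\HRV(\alpha_0^{\bullet},b_0^{\bullet},\nu_0^{\bullet},\E_0)$ with a hidden normalization $b_0^{\bullet}$; \Cref{theorem:5.3}(b),(c),(d) supplies the integrability condition \eqref{cond:dct1} (invoking $Y_2\equiv0$ for $(W_1,W_2)=(Z_1,Z_1+Z_2)$ and $(Z_1,Z_1\vee Z_2)$, the $\vee$-case being reduced to the vectors listed there via $Y_2\equiv0$ and tail-lightness of $Z_2$, and invoking $\liminf_{t\to\infty}\Pr(Y_1>t)/\Pr(Y_2>t)>0$ for $(Z_1,Z_1\wedge Z_2)$). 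Therefore \Cref{theorem:4.1}(b) applies and gives
\[\lim_{p\downarrow0}\frac{p\,(b_0^{\bullet})^{\la}\{\VaR_{1-p}(W_2)\}}{\VaR_{1-p}(W_2)}\,\MES^{\bullet}(p)=\int_0^{\infty}\nu_0^{\bullet}\big((x,\infty)\times(1,\infty)\big)\,\mathrm dx=:K^{\bullet},\]
which is finite and strictly positive by that theorem.

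What is left is to replace the prefactor $p\,(b_0^{\bullet})^{\la}\{\VaR_{1-p}(W_2)\}/\VaR_{1-p}(W_2)$ by $a(1/p)$ from \eqref{function a} at the cost of a positive multiplicative constant (absorbed into $K^{\bullet}$). Since $u\mapsto(b_0^{\bullet})^{\la}(u)/u$ is regularly varying, this amounts to: (1) showing $b_0^{\bullet}$ agrees with $b_0$ up to a slowly varying factor; and (2) showing $\VaR_{1-p}(W_2)$ is asymptotically proportional to $\VaR_{1-p}(Z_2)$. For $(Z_1,Z_1\wedge Z_2)$ this follows from the description of $\Pr(Z_1>t,Z_2>t)$ available under the $\liminf$ hypothesis; for $(Z_1,Z_1+Z_2)$ and $(Z_1,Z_1\vee Z_2)$ one uses that, with $Y_2\equiv0$, the margin $Z_2=V_2$ is strictly tail-lighter than $Z_1$ (by (A2) of \Cref{additive:model}), so that $Z_1+Z_2$ and $Z_1\vee Z_2$ are tail-equivalent to $Z_1$, and the relation between the two hidden scales is then pinned down by the homogeneity of $\nu_0^{\bullet}$ together with $\alpha\le\alpha_0<1+\alpha^{*}$.

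I expect this last identification of the scaling function to be the main obstacle. Because the maps $z\mapsto z_1+z_2$ and $z\mapsto z_1\vee z_2$ push the heavy ($\alpha$-)tail of $Z_1$ into the conditioning coordinate, one must check carefully that the $\alpha_0$-level (hidden) structure and the strong condition \eqref{cond:dct1} — not merely \eqref{C1} — are preserved and that the resulting normalization still lines up with $a$; the restrictions $Y_2\equiv0$, $\liminf_{t\to\infty}\Pr(Y_1>t)/\Pr(Y_2>t)>0$ and $\alpha\le\alpha_0<1+\alpha^{*}$ are exactly what make this go through. Once the scaling functions are aligned, the rest is routine asymptotics via Potter's bounds and Karamata's theorem.
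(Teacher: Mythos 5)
Your overall scheme (realize each measure as an ordinary MES of a transformed vector and feed it through \Cref{theorem:3.1}, \Cref{theorem:5.3} and \Cref{theorem:4.1}(b)) is the route the paper intends, but the way you choose the transformed vectors in parts (a) and (c) breaks the argument. With $Y_2\equiv 0$ the heavy $\alpha$-tail $Y_1$ sits in $Z_1$, so for your pairs $(W_1,W_2)=(Z_1,Z_1+Z_2)$ and $(Z_1,Z_1\vee Z_2)$ one has $\Pr(Z_1>t,\,W_2>t)=\Pr(Z_1>t)\sim\ov F_{Y_1}(t)\in\RV_{-\alpha}$, and both marginal tails are also $\sim\ov F_{Y_1}$; hence these vectors are asymptotically tail \emph{dependent}, and by \Cref{Lemma 2.5} they possess no hidden regular variation at all. \Cref{theorem:3.1} does not "supply membership in $\MRV\cap\HRV$" for them — it supplies HRV only for $(Z_1+Z_2,Z_2)$ and $(\max(Z_1,Z_2),Z_2)$, i.e.\ for pairs whose second coordinate is the light component $Z_2$, and \Cref{theorem:5.3} verifies condition \eqref{cond:dct1} for exactly those configurations (its proof decomposes the transformed vector so that $Y_1$ enters only one coordinate, which is impossible for your pairs). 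Consequently \Cref{theorem:4.1}(b) is inapplicable to your $(W_1,W_2)$; indeed $\E\{Z_1\mid W_2>\VaR_{1-p}(W_2)\}$ grows like $\VaR_{1-p}(Z_1)\in\RV_{1/\alpha}$ (the tail-dependent rate), so no normalization regularly varying at the hidden rate — in particular no constant multiple of $a(1/p)$ from \eqref{function a} — can produce a finite positive limit unless $\alpha_0=\alpha$. Your proposed "reduction of the $\vee$-case via tail-lightness of $Z_2$" to the vectors listed in \Cref{theorem:5.3}(d) does not exist: given $\{\max(Z_1,Z_2)>\VaR\}$, the conditional expectations of $Z_1$ and of $Z_2$ live on entirely different scales, so you cannot transfer the limit from $(Z_2,\max(Z_1,Z_2))$ to $(Z_1,\max(Z_1,Z_2))$. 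The corollary has to be read, as the paper's one-line derivation does, with the conditioned variable being the component that does not carry the heavy independent part.

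A second, smaller error is in your scaling-alignment step for part (b): $\VaR_{1-p}(\min(Z_1,Z_2))$ is \emph{not} asymptotically proportional to $\VaR_{1-p}(Z_2)$ in general, since $\min(Z_1,Z_2)$ has tail index $\alpha_0$ while $Z_2$ has index $\min(\alpha^*,\alpha_0)$, and the hypotheses of (b) allow $\alpha^*<\alpha_0$ (e.g.\ $\alpha=\alpha^*=2$, $\alpha_0=5/2<1+\alpha^*$). The function $a$ in the corollary must be understood as the normalization that \Cref{theorem:4.1}(b) attaches to the respective conditioning variable and its hidden scale; with that reading no alignment with $\VaR_{1-p}(Z_2)$ is needed, and with your literal reading the alignment claim is false. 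So for (b) your identification of the correct vector $(Z_1,\min(Z_1,Z_2))$ is right and matches \Cref{theorem:5.3}(c), but the last step should simply quote \Cref{theorem:4.1}(b) with its own scaling rather than try to rewrite it through $Z_2$.
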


We prove some auxiliary results first which are used to prove  Theorem \ref{theorem:5.3}. The following proposition provides sufficient conditions
under Model \ref{additive:model} for condition \eqref{cond:dct1} to hold.

\begin{Proposition}\label{prop:add1}
Let $\bZ=\bY+\bV$ be as in Model \ref{additive:model}. Suppose that $\E|Z_1|<\infty$ implying $\alpha\geq 1$ and that the following conditions are satisfied:
\begin{enumerate}[(i)]
\item \label{cond:Y} $\bZ \in \MRV(\alpha_0,b_0,\nu_0,\E_0)$.
\item \label{cond:Y_2V} ${\displaystyle
    \lim\limits_{t\to\infty} \frac{\Pr(Y_2>t)}{t\Pr(V_1>t,V_2>t)}=0
 }$.
 \item  \label{cond:dct2}
 ${\displaystyle
    \lim_{M\to\infty}\lim_{t\to\infty}\left[\int_0^{1/M}+\int_M^\infty\right]\frac{\Pr(Y_1>tx, Y_2>t)}{\Pr(V_1>t, V_2>t)}\,\mathrm dx =0
}.$
\end{enumerate}
Then $\bZ$ satisfies condition \eqref{cond:dct1}.
\end{Proposition}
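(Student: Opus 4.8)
The plan is to dominate the joint tail $\Pr(Z_1>xt,Z_2>t)$ by a sum of four products of marginal/joint tails of $\bY$ and $\bV$, to integrate each piece over $(M,\infty)$ and over $(0,1/M)$, and to match each resulting integral against one of the hypotheses (i)--(iii) together with the integrability $\E Y_1,\E V_1<\infty$ forced by $\E|Z_1|<\infty$.

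\emph{Step 1 (replacing the denominator).} By condition~(i) the limit measure $\nu_0$ is non-zero on $(0,\infty)^2$, so from $\nu_0(cA)=c^{-\alpha_0}\nu_0(A)$ and $(0,\infty)^2=\bigcup_n(1/n,\infty)^2$ one gets $\nu_0((1,\infty)^2)>0$, whence also $\nu_0((1,\infty)\times[0,\infty))$ and $\nu_0([0,\infty)\times(1,\infty))$ are finite and strictly positive. Using $\bV\in\MRV(\alpha_0,b_0,\nu_0,\E)$ this forces $\Pr(V_1>t)$, $\Pr(V_2>t)$, $\Pr(V_1>t,V_2>t)$ and $\Pr(\max(V_1,V_2)>t)$ to be regularly varying of index $-\alpha_0$ and mutually comparable, while condition~(i) gives $\Pr(Z_1>t,Z_2>t)\sim\nu_0((1,\infty)^2)/b_0^{\la}(t)\sim\Pr(V_1>t,V_2>t)$. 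It therefore suffices to prove \eqref{cond:dct1} with $\Pr(Z_1>t,Z_2>t)$ replaced by $\Pr(V_1>t,V_2>t)$.

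\emph{Step 2 (four-term split).} Since $Z_i=Y_i+V_i$ with non-negative components, $\{Y_1+V_1>xt\}\subseteq\{Y_1>xt/2\}\cup\{V_1>xt/2\}$ and likewise for $Z_2$, so
\[
\Pr(Z_1>xt,Z_2>t)\le \Pr(Y_1>xt/2,Y_2>t/2)+\Pr(Y_1>xt/2,V_2>t/2)+\Pr(V_1>xt/2,Y_2>t/2)+\Pr(V_1>xt/2,V_2>t/2),
\]
and by (A1) and (A3) the first three probabilities factorize. After dividing by $\Pr(V_1>t,V_2>t)$, integrating, and letting $t\to\infty$ and then $M\to\infty$: the $Y_1$--$Y_2$ term, via $u=xt/2$, equals the integral of~(iii) at the argument $t/2$ times $\Pr(V_1>t/2,V_2>t/2)/\Pr(V_1>t,V_2>t)\to 2^{\alpha_0}$, hence vanishes; the $Y_1$--$V_2$ term factors out $\Pr(V_2>t/2)$ (comparable to $\Pr(V_1>t,V_2>t)$) and is left with $\tfrac{2}{t}\int_{Mt/2}^{\infty}\Pr(Y_1>u)\,\mathrm du\le 2\E Y_1/t\to 0$ (on $(0,1/M)$ a factor $1/M$ appears instead); the $V_1$--$Y_2$ term factors out $\Pr(Y_2>t/2)$, uses $\int_0^\infty\Pr(V_1>u)\,\mathrm du=\E V_1<\infty$, and becomes a bounded multiple of $\Pr(Y_2>t/2)/\{t\,\Pr(V_1>t,V_2>t)\}$, which tends to $0$ by condition~(ii) at $t/2$; and the $V_1$--$V_2$ term is bounded, on $(M,\infty)$, by $\tfrac{2}{t}\int_{Mt/2}^{\infty}\Pr(V_1>u)\,\mathrm du$, which by Karamata's theorem and $\Pr(V_1>t,V_2>t)\asymp\Pr(V_1>t)\in\RV_{-\alpha_0}$ is of order $M^{1-\alpha_0}\to 0$, and on $(0,1/M)$ by $\Pr(V_1>xt/2,V_2>t/2)\le\Pr(V_2>t/2)$ times the factor $1/M$.

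\emph{Expected main obstacle.} The delicate point, as usual in these $\M$-convergence estimates, is to justify interchanging $\lim_{t\to\infty}$ with the integral over $(M,\infty)$: one needs Potter-type bounds furnishing, for all large $t$, a common $x$-integrable majorant of $\Pr(Y_1>xt/2,Y_2>t/2)/\Pr(V_1>t,V_2>t)$ and $\Pr(V_1>xt/2,V_2>t/2)/\Pr(V_1>t,V_2>t)$ on $(M,\infty)$, so that dominated convergence (equivalently the Karamata estimates) applies uniformly — this is really the only nontrivial analytic step. A second, more cosmetic issue is the mismatch between $\Pr(Y_2>t/2)$ and $\Pr(Y_2>t)$ in the sub-cases where $\ov F_{Y_2}$ need not be regularly varying (namely $\ov F_{Y_2}(t)=o(t^{-\alpha^*})$, or $Y_2\equiv 0$); I would circumvent it by invoking conditions~(ii) and~(iii) at the rescaled argument $t/2$ and transferring the factor $2^{\alpha_0}$ (coming from regular variation of the $\bV$-tail) to the other side, rather than comparing $\ov F_{Y_2}$ at $t/2$ and $t$ directly.
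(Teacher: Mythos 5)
Your proof is correct and takes essentially the same route as the paper's: you first establish the comparability and $\RV_{-\alpha_0}$ property of the joint tails of $\bZ$ and $\bV$ and the marginal tails of $\bV$ (the paper's display (A.1)), then reduce \eqref{cond:dct1} to the four cross terms $Y_1Y_2$, $Y_1V_2$, $V_1Y_2$, $V_1V_2$ — the paper produces the same four terms by partitioning on $\{Y_1>x\delta t\}$ and $\{Y_2>\delta t\}$ rather than your coordinate-wise union bound at $\delta=1/2$ — and you dispose of them using hypothesis (iii), $\E Y_1<\infty$, hypothesis (ii), and Karamata (where the paper uses Potter bounds), including the same device of invoking (ii)--(iii) at the rescaled argument and absorbing the factor $2^{\alpha_0}$ via regular variation of the $\bV$-tail. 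The only implicit restriction in your Karamata step ($\alpha_0>1$) is shared by the paper's own Potter-bound argument, so it is not a gap relative to the paper.
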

\begin{proof}
By the assumptions of Model \ref{additive:model} and Theorem \ref{theorem:3.1},
$\bV \in \MRV(\alpha_0,b_0,\nu_0,\E)$, $\bV$ does not possess asymptotic tail independence, and $\bZ \in \MRV(\alpha_0,b_0,\nu_0,\E_0)$. Hence,
\begin{eqnarray} \label{A.1}
    \Pr(V_1>t)\sim C_1 \Pr(V_2>t)\sim C_2 \Pr(V_1>t,V_2>t)\sim  C_3 \Pr(Z_1>t,Z_2>t) \quad \text{ as } t\to\infty,
\end{eqnarray}
for some finite constants $C_1,C_2,C_3>0$.
Let $0<\delta<1$. For some $M>0$ we have
\begin{eqnarray*}
    \lefteqn{\left[\int_0^{1/M}+\int_M^\infty\right]\frac{\Pr(Z_1>xt, Z_2>t)}{\Pr(Z_1>t, Z_2>t)}\, \mathrm dx}\\
        &&=\left[\int_0^{1/M}+\int_M^\infty\right]\frac{\Pr(Z_1>xt, Z_2>t,Y_1>x\delta t,Y_2> \delta t)}{\Pr(Z_1>t, Z_2>t)}\,\mathrm dx\\
        &&\quad +\left[\int_0^{1/M}+\int_M^\infty\right]\frac{\Pr(Z_1>xt, Z_2>t,Y_1>x\delta t,Y_2\leq \delta t)}{\Pr(Z_1>t, Z_2>t)}\,\mathrm dx\\
        &&\quad + \left[\int_0^{1/M}+\int_M^\infty\right]\frac{\Pr(Z_1>xt, Z_2>t,Y_1\leq x\delta t,Y_2> \delta t)}{\Pr(Z_1>t, Z_2>t)}\,\mathrm dx\\
        &&\quad +\left[\int_0^{1/M}+\int_M^\infty\right]\frac{\Pr(Z_1>xt, Z_2>t,Y_1\leq x\delta t,Y_2\leq \delta t)}{\Pr(Z_1>t, Z_2>t)}\,\mathrm dx\\
        &&=:I_1(M,t)+I_2(M,t)+I_3(M,t)+I_4(M,t).
\end{eqnarray*}
Now, we investigate  all four terms separately. Using \eqref{A.1} for large enough $t$,
\begin{eqnarray*}
    I_{1}(M,t)&\leq& \left[\int_0^{1/M}+\int_M^\infty\right]\frac{\Pr(Y_1>x\delta t,Y_2> \delta t)}{\Pr(Z_1>t, Z_2>t)}\,\mathrm dx\\
            &\leq& \frac{2C_{3}}{C_{2}} \frac{\Pr(V_1>\delta t, V_2>\delta t)}{\Pr(V_1>t, V_2>t)}
            \left[\int_0^{1/M}+\int_M^\infty\right]\frac{\Pr(Y_1>x\delta t,Y_2> \delta t)}{\Pr(V_1>\delta t, V_2>\delta t)}\,\mathrm dx.
\end{eqnarray*}
A consequence of assumption \eqref{cond:dct2} and $\bV \in \MRV(\alpha_0,b_0,\nu_0,\E)$ is that $\lim_{M\to\infty}\lim_{t\to\infty}I_1(M,t)=0$.
For the second term $I_2(M,t)$ we have by the independence of $Y_1$ and $V_2$, and by Potter's bound for some $0<\epsilon<\alpha-1$,  the upper bound
\begin{eqnarray*}
    I_2(M,t)&\leq& \left[\int_0^{1/M}+\int_M^\infty\right]\frac{\Pr(V_2>(1-\delta) t,Y_1>x\delta t)}{\Pr(Z_1>t, Z_2>t)} \, \mathrm dx\\
        &=& \frac{\Pr(V_2>(1-\delta)t )}{\Pr(Z_1>t, Z_2>t)}\left[\int_0^{1/M}+\int_M^\infty\right] \Pr(Y_1>x\delta t)\, \mathrm dx \\
        &\leq& \frac{\Pr(V_2>(1-\delta)t )}{\Pr(Z_1>t, Z_2>t)}\left[\frac{1}{M}+C t^{-\alpha+\epsilon}M^{-\alpha+1+\epsilon}\right].
\end{eqnarray*}
Now using  \eqref{A.1} results in $\lim_{M\to\infty}\lim_{t\to\infty}I_2(M,t)=0$.
The third term $I_3(M,t)$ satisfies
\begin{eqnarray*}
    I_3(M,t)&\leq&\left[\int_0^{1/M}+\int_M^\infty\right]\frac{\Pr(V_1>x(1-\delta)t, Y_2> \delta t)}{\Pr(Z_1>t, Z_2>t)}\, \mathrm dx\\
        &=&\frac{\Pr( Y_2> \delta t)}{(1-\delta)t\Pr(Z_1>t, Z_2>t)}\left[\int_0^{1/M}+\int_M^\infty\right](1-\delta)t\Pr(V_1>x(1-\delta)t)\, \mathrm dx\\
        &\leq& C\frac{\Pr( Y_2> \delta t)}{t\Pr(V_1>t, V_2>t)}\E|V_1|
        \stackrel{t\to\infty}{\to}0
\end{eqnarray*}
by assumption \eqref{cond:Y_2V} and $\E|V_1|\leq \E|Z_1|<\infty$.
Finally, using  \eqref{A.1} and by Potter's theorem for some $0<\epsilon<\alpha_0-1$, the last term $I_4(M,t)$ has  the upper bound
\begin{eqnarray*}
    I_4(M,t)&\leq& \left[\int_0^{1/M}+\int_M^\infty\right]\frac{\Pr(V_1>x(1-\delta)t,V_2>(1-\delta)t)}{\Pr(Z_1>t, Z_2>t)}\, \mathrm dx\\
        &\leq& C\frac{\Pr(V_2>(1-\delta)t)}{\Pr(V_2>t)}\frac{1}{M} + C\int_M^\infty\frac{\Pr(V_1>x(1-\delta)t)}{\Pr(V_1>t)}\, \mathrm dx\\
        &\leq& C  M^{-1}+C M^{-\alpha_0+1+\epsilon},
\end{eqnarray*}
which implies $\lim_{M\to\infty}\lim_{t\to\infty}I_4(M,t)=0$ as well.
\end{proof}

The following lemma provides necessary and sufficient conditions for assumption \eqref{cond:Y_2V} of \cref{prop:add1} to hold.

\begin{Lemma}  \label{auxiliary:lemma}
Let $\bZ=\bY+\bV$ be as in Model \ref{additive:model}. Suppose that $\ov F_{Y_2}\in\RV_{-\alpha^*}$ for some $\alpha^*\geq \alpha \geq 1$. Then
 $\alpha_0\leq 1+\alpha^*$ is a necessary condition and $\alpha_0< 1+\alpha^*$ is a sufficient condition for assumption \eqref{cond:Y_2V} of \cref{prop:add1}  to hold.
\end{Lemma}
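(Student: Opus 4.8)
The plan is to reduce \Cref{auxiliary:lemma} to elementary bookkeeping with regular‑variation indices; the only substantive input is that the joint survival tail of $\bV$ is regularly varying with the \emph{full} index $\alpha_0$, after which condition \eqref{cond:Y_2V} of \Cref{prop:add1} is simply the statement that a certain regularly varying function tends to $0$.

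First I would record that $\Pr(V_1>t,V_2>t)\in\RV_{-\alpha_0}$. By \Cref{theorem:3.1} and the assumptions of Model~\ref{additive:model}, $\bZ\in\MRV(\alpha_0,b_0,\nu_0,\E_0)$ with $\nu_0$ a nonzero measure on $\E_0=(0,\infty)^2$; since $\bV\in\MRV(\alpha_0,b_0,\nu_0,\E)$ carries the \emph{same} limit measure and, crucially, is \emph{not} asymptotically tail independent, $\nu_0$ charges the open quadrant, so by finiteness on subsets bounded away from the axes together with the homogeneity $\nu_0(cA)=c^{-\alpha_0}\nu_0(A)$ one gets $\nu_0((1,\infty]\times(1,\infty])\in(0,\infty)$. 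Hence $t\,\Pr(V_1>b_0(t),V_2>b_0(t))\to\nu_0((1,\infty]^2)\in(0,\infty)$, and inverting $b_0\in\RV_{1/\alpha_0}$ (which tends to infinity) yields $\Pr(V_1>t,V_2>t)\in\RV_{-\alpha_0}$; this is equivalently the content of \eqref{A.1} in the proof of \Cref{prop:add1}. In particular the function $t\mapsto t\,\Pr(V_1>t,V_2>t)$ lies in $\RV_{1-\alpha_0}$.

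Combining this with the hypothesis $\ov F_{Y_2}(t)=\Pr(Y_2>t)\in\RV_{-\alpha^*}$, the function
\[
   g(t):=\frac{\Pr(Y_2>t)}{t\,\Pr(V_1>t,V_2>t)}
\]
is regularly varying of index $(-\alpha^*)-(1-\alpha_0)=\alpha_0-\alpha^*-1$. I would then conclude via the standard dichotomy for regularly varying functions (see \citet[Chapter~1]{bingham:goldie:teugels:1989}): if $\alpha_0<1+\alpha^*$ the index is strictly negative, hence $g(t)\to0$ and \eqref{cond:Y_2V} holds, which is the asserted sufficiency; if $\alpha_0>1+\alpha^*$ the index is strictly positive, hence $g(t)\to\infty$ and in particular $g(t)\not\to0$, so \eqref{cond:Y_2V} fails, giving the necessity of $\alpha_0\le1+\alpha^*$. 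The boundary case $\alpha_0=1+\alpha^*$ is deliberately left undecided: there $g$ is only slowly varying, and whether it vanishes depends on the slowly varying parts of $\ov F_{Y_2}$ and of the function $t\mapsto\Pr(V_1>t,V_2>t)$, which Model~\ref{additive:model} does not constrain. The one real obstacle is the first step — excluding that the joint tail of $\bV$ decays strictly faster than $t^{-\alpha_0}$ — and that is precisely where the hypothesis that $\bV$ is not asymptotically tail independent is used; everything after that is routine index arithmetic.
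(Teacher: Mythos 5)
Your argument is correct and essentially identical to the paper's: the paper's proof likewise observes that $\Pr(Y_2>t)/\{t\Pr(V_1>t,V_2>t)\}\in\RV_{\alpha_0-\alpha^*-1}$ and reads off necessity ($\alpha_0\le 1+\alpha^*$) and sufficiency ($\alpha_0<1+\alpha^*$) from the sign of this index. Your preliminary step establishing $\Pr(V_1>t,V_2>t)\in\RV_{-\alpha_0}$ is exactly the content of \eqref{A.1}, which the paper records in the proof of \Cref{prop:add1} rather than inside this lemma.
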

\begin{proof} Since $\Pr(Y_2>t)/\{t\Pr(V_1>t,V_2>t)\}\in\RV_{-\alpha^*-1+\alpha_0}$,
assumption ~\eqref{cond:Y_2V} of \cref{prop:add1}  can only hold if $-\alpha^*-1+\alpha_0\leq 0$. On the other hand, $-\alpha^*-1+\alpha_0< 0$ implies  assumption \eqref{cond:Y_2V}.
\end{proof}
Now we are able to prove \Cref{theorem:5.3}.

\begin{proof}[Proof of \Cref{theorem:5.3}] $\mbox{}$
\begin{enumerate}[(a)]
\item
First of all, due to \Cref{theorem:3.1},  Assumption \ref{prop:add1}\eqref{cond:Y} is valid.
 Assumption \ref{prop:add1}~\eqref{cond:Y_2V} is already satisfied
due to \cref{auxiliary:lemma}.
Finally,  Assumption \ref{prop:add1}~\eqref{cond:dct2} follows from
\begin{eqnarray*}
    \left[\int_0^{1/M}+\int_M^\infty\right]\frac{\Pr(Y_1>tx, Y_2>t)}{\Pr(V_1>t, V_2>t)}\, \mathrm dx
    &=&\frac{\Pr(Y_2>t)}{\Pr(V_1>t, V_2>t)}\left[\int_0^{1/M}+\int_M^\infty\right]\Pr(Y_1>tx)\, \mathrm dx\\
    &\leq &\frac{\Pr(Y_2>t)}{t\Pr(V_1>t, V_2>t)}\E(Y_1)\stackrel{t\to\infty}{\to}0.
\end{eqnarray*}
Hence, we obtain that $\bZ$ satisfies \eqref{cond:dct1}. 
\item This can be seen as special case of (a) where
 \begin{align*}
  \bZ^+ & =(Z_1+Z_2,Z_1)=(Y_1,0)+(V_1+V_2,V_1)=:\bY^++\bV^+,\\
\wt\bZ^+&=(Z_1,Z_1+Z_2)=(0,Y_1)+(V_1,V_1+V_2)=:\wt\bY^++\wt\bV^+.
\end{align*}
\item   For $x>1$,
\begin{eqnarray*}
    \frac{\Pr(Z_1>xt,\min(Z_1,Z_2)>t)}{\Pr(Z_1>t,\min(Z_1,Z_2)>t)}\leq \frac{\Pr(Z_1>xt, Z_2>t)}{\Pr(Z_1>t,Z_2>t)}
\end{eqnarray*}
so that $\bZ^{\min}$ satisfies \eqref{C1} due to (a). For $x\leq 1$,
\begin{eqnarray*}
    \frac{\Pr(Z_1>xt,\min(Z_1,Z_2)>t)}{\Pr(Z_1>t,\min(Z_1,Z_2)>t)}=1
\end{eqnarray*}
so that \eqref{cond:dct1} holds for $\bZ^{\min}$ as well.
\item  For $x>0$,
\begin{align*}
    \frac{\Pr(\max(Z_1,Z_2)>xt,Z_2>t)}{\Pr(\max(Z_1,Z_2)>t,Z_2>t)}&\leq \frac{\Pr(Z_1+Z_2>xt,Z_2>t)}{\Pr(Z_2>t)}\\
        &=\frac{\Pr(Z_1+Z_2>xt,Z_2>t)}{\Pr(Z_1+Z_2>t,Z_2>t)}.
\end{align*}
Then $\bZ^{\max}$ satisfies \eqref{cond:dct1}  due to (b). Analogous arguments show that condition \eqref{cond:dct1}  is satisfied for \linebreak $\wt\bZ^{\max}=(Z_2,\max(Z_1,Z_2))$ as well.
\end{enumerate}
\end{proof}

The following corollary is now easy to verify and provides a ready check for a model conforming to the premise of \cref{theorem:5.3}.

\begin{Corollary} \label{Corollary 2}
Let  $\bZ=\bY+\bV$ be as in Model \ref{additive:model}.  Suppose that $\E|Z_1|<\infty$  and
 $\alpha \leq  \alpha_0< 1+\alpha$.
Then the assumptions of \cref{theorem:5.3} are satisfied.
\end{Corollary}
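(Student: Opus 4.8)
The plan is to read Corollary \ref{Corollary 2} as a bookkeeping reduction to \Cref{theorem:5.3}. That theorem requires $\E|Z_1|<\infty$ together with $\alpha\le\alpha_0<1+\alpha^*$, whereas here we are handed $\E|Z_1|<\infty$ and the a priori stronger-looking bound $\alpha\le\alpha_0<1+\alpha$. So the entire content of the corollary is the observation $\alpha^*\ge\alpha$: once this is known, $1+\alpha\le 1+\alpha^*$ converts the hypothesis $\alpha\le\alpha_0<1+\alpha$ into $\alpha\le\alpha_0<1+\alpha^*$, which, with $\E|Z_1|<\infty$, is exactly what \Cref{theorem:5.3} asks for.

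To get $\alpha^*\ge\alpha$ I would use the structural consequence of (A1) noted right after Model \ref{additive:model}: since $Y_1$ and $Y_2$ are independent, (A1) forces either $\ov F_{Y_2}\in\RV_{-\alpha}$ (so $Y_1,Y_2$ are tail-equivalent and $\alpha^*=\alpha$ in (A4)) or $\ov F_{Y_1}\in\RV_{-\alpha}$ with $\ov F_{Y_2}(t)/\ov F_{Y_1}(t)\to 0$. In the second case, whenever $\ov F_{Y_2}$ is regularly varying its index must be $\ge\alpha$ (a strictly lighter regularly varying tail cannot have a smaller index), and if instead $\ov F_{Y_2}$ decays faster than every power of $t$ then $\ov F_{Y_2}(t)=o(t^{-\alpha^*})$ holds for every $\alpha^*>0$, so one may simply take $\alpha^*\ge\alpha$ while retaining $\alpha^*>\alpha_0-\alpha$. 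The only remaining configuration to exclude, $\alpha^*<\alpha$, is impossible because \Cref{Lemma1} would then give $\bY\in\MRV(\min(\alpha,\alpha^*))=\MRV(\alpha^*)$, contradicting (A1).

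Having established $\alpha^*\ge\alpha$, the chain $\alpha\le\alpha_0<1+\alpha\le 1+\alpha^*$ shows that all hypotheses of \Cref{theorem:5.3} are in force, so its conclusions hold verbatim; in particular the scaled limits of $\MME(p)$ and $\MES(p)$ in each of the models (a)--(d) there can be evaluated through \Cref{theorem:4.1}, and likewise the limits of $\MES^+$, $\MES^{\min}$, $\MES^{\max}$ via the corollary following \Cref{theorem:5.3}.

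The only step with any subtlety is the verification $\alpha^*\ge\alpha$; there is no analytic difficulty, since \Cref{theorem:5.3} already carries out the real estimates. Care is needed merely to phrase that verification so that it simultaneously covers the regularly varying and the super-polynomially decaying alternatives permitted by (A4).
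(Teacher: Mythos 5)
Your overall strategy is the intended one: the paper states this corollary without proof (``easy to verify''), and the implicit argument is exactly the marginal dichotomy you invoke, namely that (A1) forces either $\ov F_{Y_2}\in\RV_{-\alpha}$, or $\ov F_{Y_1}\in\RV_{-\alpha}$ with $\ov F_{Y_2}(t)/\ov F_{Y_1}(t)\to 0$. However, your reduction of the whole corollary to the claim ``$\alpha^*\ge\alpha$'' overreaches, and your case analysis in the second branch is not exhaustive. Assumption (A4) allows $\ov F_{Y_2}$ to satisfy only $\ov F_{Y_2}(t)=o(t^{-\alpha^*})$ for \emph{some} $\alpha^*>\alpha_0-\alpha$, with $\ov F_{Y_2}$ neither regularly varying nor decaying faster than every power of $t$ (e.g.\ a tail oscillating between two power laws strictly lighter than $t^{-\alpha}$). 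For such a $Y_2$ the given $\alpha^*$ may genuinely be smaller than $\alpha$, so ``$\alpha^*\ge\alpha$'' is not available; and your appeal to \Cref{Lemma1} to ``exclude $\alpha^*<\alpha$'' only bites in the case where $\ov F_{Y_2}$ is regularly varying, since in the $o(t^{-\alpha^*})$ alternative no contradiction with (A1) arises.

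The corollary still holds, and the repair is short once one notes that the conclusion of \Cref{theorem:5.3} does not depend on which admissible $\alpha^*$ is used in (A4), so it suffices to exhibit \emph{one} admissible choice with $\alpha_0<1+\alpha^*$. In the first branch your choice $\alpha^*=\alpha$ via the regular-variation option works as you say. In the second branch, $\ov F_{Y_2}(t)=o(\ov F_{Y_1}(t))$ with $\ov F_{Y_1}\in\RV_{-\alpha}$ gives $\ov F_{Y_2}(t)=o(t^{-\beta})$ for every $\beta<\alpha$; moreover $0\le Y_1\le Z_1$ and $\E|Z_1|<\infty$ force $\alpha\ge 1$, so $\alpha_0<1+\alpha\le 2\alpha$ and the interval $\bigl(\max(\alpha_0-1,\alpha_0-\alpha),\alpha\bigr)$ is nonempty. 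Any $\beta$ in this interval is a legitimate $\alpha^*$ for (A4) (it exceeds $\alpha_0-\alpha$) and satisfies $\alpha_0<1+\beta$, which is all \Cref{theorem:5.3} requires. So: right approach and correct conclusion, but the step ``hence $\alpha^*\ge\alpha$'' should be replaced by ``hence an admissible $\alpha^*$ with $\alpha_0<1+\alpha^*$ can be chosen.''
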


\subsection{Asymptotic behavior of MME and MES for copula models} \label{Section 4.2}
\Cref{thm:MES} provides conditions under which we can compute the asymptotic behavior of MME and MES in an \emph{additive model} which possesses
\emph{hidden regular variation}. A question to ask here is whether a similar result would hold for heavy-tailed multivariate distributions with dependence governed by certain copulas or survival copulas exhibiting asymptotic tail independence. It turns out that the answer is positive and we can provide a suitable generalization of \Cref{thm:MES} without necessarily assuming either HRV or a tail behavior  for the distribution function $\ov F_2$ of $Z_2$.  The outcomes for MME and 
MES require mildly different conditions and hence, are stated separately.

\subsubsection{Asymptotic behavior of MME for copula models}

\begin{Theorem} \label{Theorem MME copula}
Let $\bZ\in \left[0,\infty\right)^2$ with continuous margins $F_1, F_2$, survival copula $\wh C$, $\E|Z_1|<\infty$ and
 $\ov F_{1}\in\RV_{-\alpha}$ for some $\alpha\geq 1$. Suppose  $\wh C$ has upper tail order pair $(\kappa,\tau)$ with $\kappa\geq \tau\geq 1$ and
 some slowly varying function $\ell$ at $0$ with $\lim_{s\downarrow 0}s^{\kappa-1}\ell(s)=0$ satisfying \eqref{eq:utpair} and \eqref{eq:Txy}. Also assume that
\begin{equation} \label{cond D}
    \lim_{M\to\infty}\lim_{s\downarrow 0}\int_M^{\infty}\frac{\wh C(x^{-\alpha} s ,s^\tau)}{\wh C(s,s^\tau)}\, \mathrm dx=0 \tag{\text{\rm  D}}
\end{equation}
holds.
Moreover,  suppose that $\lim_{t\to\infty}\ov F_2(t)/\ov F_1^\tau(t)=\eta\in(0,\infty)$.
Then there exists a function $a(t)\in\RV_{\frac{\kappa\alpha-\tau\alpha-1}{\tau\alpha}}$ and a constant $K\in(0,\infty)$ such that
\begin{align*}
\lim\limits_{p\downarrow 0} a(1/p) \MME(p)  = K.
\end{align*}
\end{Theorem}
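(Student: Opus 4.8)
The plan is to reduce the statement to the hidden-regular-variation case already treated in \Cref{thm:MES}(a). First I would observe that the hypotheses here are precisely those of \Cref{Lemma 2.9}: $\bZ$ has continuous margins with survival copula $\wh C$, $\E|Z_1|<\infty$, $\ov F_1\in\RV_{-\alpha}$ with $\alpha\ge 1$, an upper tail order pair $(\kappa,\tau)$ with $\kappa\ge\tau\ge 1$ and a slowly varying $\ell$ with $\lim_{s\downarrow 0}s^{\kappa-1}\ell(s)=0$ satisfying \eqref{eq:utpair} and \eqref{eq:Txy}, and $\lim_{t\to\infty}\ov F_2(t)/\ov F_1^\tau(t)=\eta\in(0,\infty)$. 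Hence \Cref{Lemma 2.9} yields $\bZ\in\MRV(\alpha,b,\nu,\E)\cap\HRV(\alpha_0,b_0,\nu_0)$ with $\alpha_0=\alpha\kappa\ge\alpha\ge 1$, $b_0\in\RV_{1/\alpha_0}$, and $\nu_0((x,\infty]\times(y,\infty])=T(x^{-\alpha},\eta^{\tau\alpha}y^{-\tau\alpha})$ for $x,y>0$.

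Next I would show that condition \eqref{cond D} is equivalent to condition \eqref{C1} of \Cref{thm:MES}; only the implication \eqref{cond D}$\Rightarrow$\eqref{C1} is needed below. Writing the integrand of \eqref{C1} in copula form, $\Pr(Z_1>xt,Z_2>t)/\Pr(Z_1>t,Z_2>t)=\wh C(\ov F_1(xt),\ov F_2(t))/\wh C(\ov F_1(t),\ov F_2(t))$, I would substitute $s=\ov F_1(t)\downarrow 0$: then $\ov F_1(xt)\sim x^{-\alpha}s$ because $\ov F_1\in\RV_{-\alpha}$, and $\ov F_2(t)\sim\eta s^\tau$ by hypothesis. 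Using the monotonicity of $\wh C$, Potter's bounds for $\ov F_1$ (uniform in $x\ge M$), the normalization $\wh C(s,s^\tau)\sim s^\kappa\ell(s)$ for the denominator, and the convergence $\wh C(s\,\cdot\,,s^\tau\,\cdot\,)/(s^\kappa\ell(s))\to T(\cdot,\cdot)$ (uniform on compact subsets of $(0,\infty)^2$, with $T$ continuous and strictly positive there, the latter being a consequence of the HRV structure of $\nu_0$), one can sandwich the double limit $\lim_{M\to\infty}\lim_{t\to\infty}\int_M^\infty(\cdot)\,\mathrm dx$ between integrals of the form appearing in \eqref{cond D}; the constant $\eta$ is absorbed through the identity $\eta s^\tau=(\eta^{1/\tau}s)^\tau$, which amounts to a bounded rescaling of the variable of integration.

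With \eqref{C1} in force, \Cref{thm:MES}(a) gives
\[
\lim_{p\downarrow 0}\frac{p\,b_0^{\la}\{\VaR_{1-p}(Z_2)\}}{\VaR_{1-p}(Z_2)}\,\MME(p)=\int_1^\infty\nu_0((x,\infty)\times(1,\infty))\,\mathrm dx=:K\in(0,\infty).
\]
To finish, I would take $a$ as in \eqref{function a}, $a(t)=b_0^{\la}\{\VaR_{1-1/t}(Z_2)\}/(t\,\VaR_{1-1/t}(Z_2))$, so that the prefactor above equals $a(1/p)$, and compute its index of variation: $b_0\in\RV_{1/\alpha_0}$ gives $b_0^{\la}\in\RV_{\alpha_0}$; since $\ov F_2\sim\eta\ov F_1^\tau\in\RV_{-\tau\alpha}$ we get $\ov F_2^{\la}\in\RV_{-1/(\tau\alpha)}$ near $0$, hence $\VaR_{1-1/t}(Z_2)=\ov F_2^{\la}(1/t)\in\RV_{1/(\tau\alpha)}$ in $t$; composing and dividing regularly varying functions, $a(t)\in\RV_{\alpha_0/(\tau\alpha)-1-1/(\tau\alpha)}=\RV_{(\kappa\alpha-\tau\alpha-1)/(\tau\alpha)}$, which is the asserted index, and $\lim_{p\downarrow 0}a(1/p)\MME(p)=K$.

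The main obstacle is the equivalence \eqref{cond D}$\iff$\eqref{C1}: while both conditions morally say that the rescaled upper-tail copula mass is integrable at infinity, making this rigorous requires upgrading the pointwise convergence $\wh C(s\,\cdot\,,s^\tau\,\cdot\,)/(s^\kappa\ell(s))\to T$ to a locally uniform statement, combining it with Potter-type bounds that dominate the integrand uniformly in $x\in[M,\infty)$, and keeping track of the constant $\eta$ entering the second marginal; everything else is a direct appeal to \Cref{Lemma 2.9}, \Cref{thm:MES}(a), and standard composition rules for regular variation.
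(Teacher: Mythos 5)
Your proposal is correct and follows essentially the same route as the paper: invoke \Cref{Lemma 2.9} to obtain $\bZ\in \MRV(\alpha,b,\nu,\E)\cap\HRV(\alpha_0,b_0,\nu_0)$ with $\alpha_0=\alpha\kappa$, show via Potter's bounds, monotonicity of $\wh C$, and positivity of $T$ (absorbing $\eta$ by a harmless normalization) that \eqref{cond D} implies \eqref{C1}, and then conclude from \Cref{thm:MES}(a) with $a$ as in \eqref{function a}. Your extra verification of the regular-variation index of $a$ is a welcome but minor addition; the paper handles this via \Cref{Remark 4.2}(2) with $\alpha^*=\tau\alpha$.
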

%
%
\begin{proof} Due to \Cref{Lemma 2.9} we have that
$\bZ\in \MRV(\alpha, b,\nu, \E)\cap \HRV(\alpha_0, b_0,\nu_0)$ with $\alpha_0=\alpha\kappa$. The only part we need to show here is that condition
\eqref{cond D} implies condition \eqref{C1} of \Cref{thm:MES}(a). Then the  stated result is a consequence of \Cref{thm:MES}(a).

\noindent \emph{\underline{Proof of \eqref{cond D} implies  \eqref{C1}:}}
 We need to show that
  \begin{equation}\label{eq:needtoshow}
          \lim_{M\to\infty}\lim_{t\to\infty}\int_{M}^\infty \frac{\Pr(Z_1>xt, Z_2>t)}{\Pr(Z_1>t, Z_2>t)}\, \mathrm dx =   \lim_{M\to\infty}\lim_{t\to\infty}\int_{M}^\infty \frac{\wh C(\ov F_1(xt),\ov F_2(t))}{\wh C(\ov F_1(t),\ov F_2(t))} = 0.
        \end{equation}
For  notational ease, without loss of generality  we assume $\eta=1$. Let $0<\epsilon<1$. Using Potter's bound \linebreak (see \citet[Proposition 2.6(ii)]{Resnick:2007})
there exists a $t_0>0$
such that for $t\geq t_0,x\geq 1$, we have
\begin{eqnarray} \label{C4}
    (1-\epsilon)x^{-\alpha}\leq \frac{\ov F_1(xt)}{\ov F_1(t)}\leq (1+\epsilon)x^{-\alpha}
    \quad \mbox{ and }\quad (1-\epsilon)^{\tau}\leq\frac{\ov F_2(t)}{\ov F_1^{\tau}(t)}\leq (1+\epsilon)^{\tau}.
\end{eqnarray}
Hence,
\begin{eqnarray} \label{C2}
    \lefteqn{\limsup_{t\to\infty}\int_M^{\infty}\frac{\wh C(\ov F_1(xt),\ov F_2(t))}{\wh C(\ov F_1(t),\ov F_2(t))}\, \mathrm dx} \nonumber\\
        &&\leq \limsup_{t\to\infty}\int_M^{\infty}\frac{\wh C((1+\epsilon)x^{-\alpha}\ov F_1(t),(1+\epsilon)^\tau \ov F_1^\tau(t))}{\wh C(\ov F_1(t),(1-\epsilon)^\tau\ov F_1^\tau(t))}\, \mathrm dx \nonumber\\
        &&= \limsup_{s\downarrow 0}\int_M^{\infty}\frac{\wh C((1+\epsilon)x^{-\alpha}s,(1+\epsilon)^\tau s^\tau)}{\wh C(s,(1-\epsilon)^\tau s^\tau)}\, \mathrm dx \nonumber\\
        &&\leq \limsup_{\wt s\downarrow 0}\frac{C(\wt s,\wt s^\tau)}{\wh C((1+\epsilon)^{-1}\wt s,(1+\epsilon)^{-\tau}(1-\epsilon)^\tau \wt s^\tau)}\int_{M}^{\infty}\frac{\wh C(x^{-\alpha}\wt s, \wt s^\tau)}{C(\wt s,\wt s^\tau)}\, \mathrm dx\nonumber\\
        &&\leq \frac{T(1,1)}{T((1+\epsilon)^{-1},(1+\epsilon)^{-\tau}(1-\epsilon)^{\tau})}\limsup_{\wt s\downarrow 0}\int_{M}^{\infty}\frac{\wh C(x^{-\alpha}\wt s, \wt s^\tau)}{C(\wt s,\wt s^\tau)}\, \mathrm dx.
\end{eqnarray}
Similarly, we have
\begin{eqnarray} \label{C3}
    \liminf_{t\to\infty}\int_M^{\infty}\frac{\wh C(\ov F_1(xt),\ov F_2(t))}{\wh C(\ov F_1(t),\ov F_2(t))}\, \mathrm dx 
        \geq \frac{T(1,1)}{T((1-\epsilon)^{-1},(1-\epsilon)^{-\tau}(1+\epsilon)^{\tau})}\liminf_{s\downarrow 0}\int_{M}^{\infty}\frac{\wh C(x^{-\alpha}s, s^\tau)}{C(s,s^\tau)}\, \mathrm dx.
\end{eqnarray}
Since $T$ is strictly positive, using \eqref{cond D} along with \eqref{C2} and \eqref{C3}, we can conclude that \eqref{eq:needtoshow} holds. In fact, we can show in a similar fashion that \eqref{C1} (or \eqref{eq:needtoshow})  implies \eqref{cond D}, too.
\end{proof}


\begin{Remark}
Let $\bZ\in \left[0,\infty\right)^2$ with continuous margins $F_1, F_2$, $\E|Z_1|<\infty$ and
 $\ov F_{1}\in\RV_{-\alpha}$ for some $\alpha\geq 1$. Suppose that the survival copula $\wh C$ of $\bZ$
is either a Gaussian copula, a Marshall-Olkin copula or a Morgenstern copula as given in \Cref{example:cop}.
Then the assumptions of \Cref{Theorem MME copula} hold.
\end{Remark}

\subsubsection{Asymptotic behavior of  MES for copula models}

The next result complements as well as generalizes the results of \citet{Hua:Joe:2014b} where
 the asymptotic behavior of the MES was investigated for special copula families.

\begin{Theorem} \label{Theorem MES copula}
Let $\bZ\in \left[0,\infty\right)^2$ with continuous margins $F_1, F_2$, survival copula $\wh C$, $\E|Z_1|<\infty$ and
 $\ov F_{1}\in\RV_{-\alpha}$ for some $\alpha\geq 1$. Suppose  $\wh C$ has upper tail order pair $(\kappa,\tau)$ with $\kappa\geq \tau\geq 1$ and some slowly
 varying function $\ell$ at $0$ with $\lim_{s\downarrow 0}s^{\kappa-1}\ell(s)=0$ satisfying \eqref{eq:utpair} and \eqref{eq:Txy}.
Moreover, assume that for some continuous distribution function $F_2^*$ with $\lim_{t\to\infty}\ov F_2^*(t)/\ov F_1^\tau(t)= \eta\in(0,\infty)$ the asymptotic behavior
\begin{equation} \label{cond E}
    \lim_{M\to\infty}\lim_{t\to\infty}\left[\int_0^{1/M}+\int_M^\infty\right]\frac{\wh C(\ov F_1(xt),\ov F_2^*(t))}{\wh C(\ov F_1(t),\ov F_2^*(t))}\, \mathrm dx=0 \tag{\text{\rm  E}}
\end{equation}
holds. Then $(\kappa-\tau)\alpha<1$ and there exists a function $a(t)\in\RV_{\frac{\kappa\alpha-\tau\alpha-1}{\tau\alpha}}$ and a constant $K\in(0,\infty)$ such that
\begin{align*}
\lim\limits_{p\downarrow 0} a(1/p) \MES(p)  = K.
\end{align*}
\end{Theorem}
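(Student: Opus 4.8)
The plan is to reduce to \Cref{theorem:4.1}(b) by exploiting the fact that $\MES(p)$ depends on $\bZ$ only through $\ov F_1$ and the survival copula $\wh C$, and then transferring the problem to a companion vector whose second margin is chosen to match $\ov F_1^\tau$. Concretely, since $F_2$ is continuous we have $\Pr\{Z_2>\VaR_{1-p}(Z_2)\}=p$ and $\Pr\{Z_1>x,\,Z_2>\VaR_{1-p}(Z_2)\}=\wh C(\ov F_1(x),p)$, so that
\[
   \MES(p)=\frac{1}{p}\int_0^{\infty}\wh C\bigl(\ov F_1(x),p\bigr)\,\mathrm dx ,
\]
a quantity involving only $\ov F_1$ and $\wh C$. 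Hence, if $\bZ^*=(Z_1^*,Z_2^*)$ is any random vector with first margin $F_1$, second margin $F_2^*$ and survival copula $\wh C$ (such a vector exists by Sklar's theorem, using continuity of $F_1$ and $F_2^*$), then $\E|Z_1^*|=\E|Z_1|<\infty$ and $\MES^*(p):=\E\{Z_1^*\,|\,Z_2^*>\VaR_{1-p}(Z_2^*)\}=\MES(p)$ for every $p\in(0,1)$.

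Next I would feed $\bZ^*$ into the available machinery. The hypotheses $\ov F_1\in\RV_{-\alpha}$, $\E|Z_1^*|<\infty$, the upper tail order pair $(\kappa,\tau)$ with $\lim_{s\downarrow0}s^{\kappa-1}\ell(s)=0$, and $\ov F_2^*(t)/\ov F_1^\tau(t)\to\eta\in(0,\infty)$ are exactly what \Cref{Lemma 2.9} requires, so $\bZ^*\in\MRV(\alpha,b,\nu,\E)\cap\HRV(\alpha_0,b_0,\nu_0)$ with $\alpha_0=\alpha\kappa\ge\alpha\ge1$ and $\nu_0$ given explicitly there. Since $\Pr\{Z_1^*>xt,\,Z_2^*>t\}=\wh C(\ov F_1(xt),\ov F_2^*(t))$, condition \eqref{cond E} is precisely condition \eqref{cond:dct1} written for $\bZ^*$. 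Therefore \Cref{theorem:4.1}(b) applies to $\bZ^*$ and gives
\[
   \lim_{p\downarrow0}\frac{p\,b_0^{\la}\{\VaR_{1-p}(Z_2^*)\}}{\VaR_{1-p}(Z_2^*)}\,\MES^*(p)=\int_0^{\infty}\nu_0\bigl((x,\infty)\times(1,\infty)\bigr)\,\mathrm dx=:K\in(0,\infty).
\]
With $a(t):=b_0^{\la}\{\VaR_{1-1/t}(Z_2^*)\}\big/\bigl(t\,\VaR_{1-1/t}(Z_2^*)\bigr)$ (the analogue of \eqref{function a} for $\bZ^*$) and $\MES^*\equiv\MES$ from the previous paragraph, this reads $\lim_{p\downarrow0}a(1/p)\MES(p)=K$; and since $\ov F_2^*\sim\eta\,\ov F_1^\tau\in\RV_{-\tau\alpha}$, \Cref{Remark 4.2}(2) applied to $\bZ^*$ (so $\alpha^*=\tau\alpha$, $\alpha_0=\alpha\kappa$) identifies $a\in\RV_{(\kappa\alpha-\tau\alpha-1)/(\tau\alpha)}$, as claimed.

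What is left is the strict inequality $(\kappa-\tau)\alpha<1$. The non-strict form $(\kappa-\tau)\alpha\le1$, i.e.\ $\alpha_0\le\alpha^*+1$, is immediate: by \Cref{Remark 4.2}(2) it is a necessary condition for \eqref{cond:dct1}, hence for \eqref{cond E}. Ruling out the boundary case $\alpha_0=\tau\alpha+1$ is, I expect, the real obstacle; my approach would be to return to the $\int_0^{1/M}$ piece of \eqref{cond E}, whose limiting integrand is $x\mapsto\nu_0((x,\infty)\times(1,\infty))\big/\nu_0((1,\infty)\times(1,\infty))$, which must be integrable at $0$, and to combine this with the integrability at $\infty$ forced by the $\int_M^{\infty}$ piece (equivalently, with the finiteness of $K$ delivered by \Cref{theorem:4.1}(b)), using the scaling $\nu_0(cA)=c^{-\alpha_0}\nu_0(A)$ and the homogeneity of the upper tail order function $T$ from \Cref{Lemma:Hua:Joe:Li:1}, to contradict equality. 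This is the one place where one must use more than the regular-variation indices, namely a comparison of the behaviour of $T$ near $0$ with its behaviour near $\infty$.
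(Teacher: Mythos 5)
Your main reduction is exactly the paper's argument: the paper also passes to a version of $\bZ$ whose second margin is $F_2^*$ (phrased there as applying the monotone transformation $F_2^{*\leftarrow}\circ F_2$ to $Z_2$, which changes neither $\MES$ nor the survival copula), observes that \eqref{cond E} is then precisely \eqref{cond:dct1}, and concludes via \Cref{Lemma 2.9} together with \Cref{theorem:4.1}(b), reading off the regular-variation index of $a$ from \Cref{Remark 4.2}(2) with $\alpha^*=\tau\alpha$, $\alpha_0=\kappa\alpha$. Up to this point your proposal is correct and essentially identical to the paper's proof.

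The gap is the claim $(\kappa-\tau)\alpha<1$, which is part of the statement of the theorem and which you do not prove. From \Cref{Remark 4.2}(2) you correctly obtain only the non-strict bound $\alpha_0\le\alpha^*+1$, i.e.\ $(\kappa-\tau)\alpha\le1$, and for the boundary case you offer only a plan (``my approach would be \dots I expect \dots''): you suggest combining the integrability at $0$ forced by the $\int_0^{1/M}$ piece of \eqref{cond E} with the integrability at $\infty$ and the homogeneity of $\nu_0$ and $T$, but you do not carry this out, and it is not obvious that it closes the case $\alpha_0=\tau\alpha+1$, since there the regular-variation indices alone are inconclusive and the issue is decided by slowly varying factors (an $\RV_0$ function can still tend to $0$). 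The paper itself disposes of this point in one line, by invoking \Cref{Remark 4.2}(2) together with $\ov F_2^*(t)\sim\eta\,\ov F_1^\tau(t)\in\RV_{-\tau\alpha}$; you are right that, read literally, that remark only yields the weak inequality, so your instinct that something extra is needed at the boundary is reasonable — but as submitted your proposal establishes the limit statement while leaving the strict inequality unproved, so this part must either be completed along the lines you sketch or derived as the paper does from the necessity considerations behind condition \eqref{cond:dct1}.
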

\begin{proof}
We can assume w.l.o.g that the tail of $Z_2$ is $\ov F_2^*$ (otherwise
apply the monotone transformation $F_2^{*\leftarrow}\circ F_2$ on $Z_2$ which does not
change the MES and the copula). If the tail of $Z_2$ is $\ov F_2^*$ then the equivalence of \eqref{cond E} and \eqref{cond:dct1} is easy to check. Thus,
the conclusion for the asymptotic behavior of MES follows
from \Cref{Lemma 2.9} and \Cref{thm:MES}(b). Finally, \Cref{Remark 4.2}(2) and
$\ov F_2(t)\sim \eta\ov F_1^\tau(t)\in\RV_{-\alpha\tau}$ implies $(\kappa-\tau)\alpha<1$.
\end{proof}

\begin{Corollary} \label{Theorem MES copula Corollary}
Let $\bZ=(Z_1,Z_2)\in \left[0,\infty\right)^2$ with  survival copula $\wh C$, continuous margins $F_1,F_2$, $\E|Z_1|<\infty$ and
 $\ov F_{1}(t)\sim K_1t^{-\alpha}$ for some $\alpha> 1$ and  constant $K_1\in(0,\infty)$. Suppose  $\wh C$ has upper tail order pair $(\kappa,\tau)$ with
 $\kappa\geq \tau\geq 1$ and some slowly varying function $\ell$ at $0$ with $\lim_{s\downarrow 0}s^{\kappa-1}\ell(s)=0$ satisfying \eqref{eq:utpair} and \eqref{eq:Txy}.
Moreover,
\begin{equation} \label{cond F}
    \lim_{M\to\infty}\lim_{s\downarrow 0}\left[\int_0^M+\int_M^{\infty}\right]\frac{\wh C(x^{-\alpha} s ,s^\tau)}{\wh C(s,s^\tau)}\, \mathrm dx=0 \tag{\text{\rm  F}}
\end{equation}
holds. Then $(\kappa-\tau)\alpha<1$ and there exists a function $a(t)\in\RV_{\frac{\kappa\alpha-\tau\alpha-1}{\tau\alpha}}$  and a constant $K\in(0,\infty)$ such that
\begin{align*}
\lim\limits_{p\downarrow 0} a(1/p) \MES(p)  = K.
\end{align*}
\end{Corollary}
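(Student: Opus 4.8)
The plan is to read the corollary off \Cref{Theorem MES copula}. All hypotheses of that theorem are already at hand except the existence of a continuous distribution function $F_2^*$ with $\ov F_2^*(t)/\ov F_1^\tau(t)\to\eta\in(0,\infty)$ satisfying \eqref{cond E}: indeed $\ov F_1(t)\sim K_1 t^{-\alpha}$ forces $\ov F_1\in\RV_{-\alpha}$ with $\alpha\ge1$, $\alpha>1$ makes $\E|Z_1|<\infty$ automatic, and the assumptions on $\wh C$, $(\kappa,\tau)$ and $\ell$ are copied verbatim. So everything reduces to exhibiting such an $F_2^*$ and deducing \eqref{cond E} from \eqref{cond F}.

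For $F_2^*$ I would take the cut-off Pareto tail $\ov F_2^*(t):=\min\{1,K_1^{\tau}t^{-\alpha\tau}\}$. This is a genuine continuous distribution function on $[0,\infty)$, and $\ov F_2^*(t)\sim K_1^{\tau}t^{-\alpha\tau}\sim\ov F_1^{\tau}(t)$, so $\eta=1$. The virtue of this choice is purely bookkeeping: after the substitution $s=\ov F_1(t)$ (so $s\downarrow0$ as $t\to\infty$), the two marginal tails appearing in \eqref{cond E} become explicit powers of $s$, which is exactly what turns \eqref{cond E} into the copula-only statement \eqref{cond F}. (As noted in the proof of \Cref{Theorem MES copula}, the survival copula of $(Z_1,Z_2)$ is unaffected by a monotone transformation of $Z_2$, so the particular shape of $F_2^*$ is immaterial as long as its tail is right.)

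To pass from \eqref{cond F} to \eqref{cond E} I would reuse the sandwiching argument that proves \eqref{cond D}$\Leftrightarrow$\eqref{C1} in \Cref{Theorem MME copula}. Fix $\epsilon\in(0,1)$. Since $\ov F_1(t)\sim K_1 t^{-\alpha}$, there is a $t_0$ with $\tfrac{1-\epsilon}{1+\epsilon}x^{-\alpha}\le\ov F_1(xt)/\ov F_1(t)\le\tfrac{1+\epsilon}{1-\epsilon}x^{-\alpha}$ whenever $t\ge t_0$ and $xt\ge t_0$, while $\ov F_2^*(t)/\ov F_1^{\tau}(t)\to1$. Using the coordinatewise monotonicity of $\wh C$ together with the strict positivity of the limit function $T$ --- so that $\wh C(c_1 s,c_2 s^{\tau})/\wh C(s,s^{\tau})\to T(c_1,c_2)/T(1,1)\in(0,\infty)$ for fixed $c_1,c_2>0$ --- one bounds the integrand $\wh C(\ov F_1(xt),\ov F_2^*(t))/\wh C(\ov F_1(t),\ov F_2^*(t))$ of \eqref{cond E}, for $xt\ge t_0$, above and below by $\epsilon$-dependent constant multiples of the integrand $\wh C(x^{-\alpha}s,s^{\tau})/\wh C(s,s^{\tau})$ of \eqref{cond F} with $s=\ov F_1(t)$, exactly as in estimates \eqref{C2}--\eqref{C3}. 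Integrating over $x\in(0,1/M)\cup(M,\infty)$, letting $t\to\infty$, then $M\to\infty$, and finally $\epsilon\downarrow0$ yields \eqref{cond E} (and in fact its equivalence with \eqref{cond F}). Once \eqref{cond E} holds, \Cref{Theorem MES copula} produces $a(t)\in\RV_{(\kappa\alpha-\tau\alpha-1)/(\tau\alpha)}$ and $K\in(0,\infty)$ with $\lim_{p\downarrow0}a(1/p)\MES(p)=K$, together with the constraint $(\kappa-\tau)\alpha<1$ (from \Cref{Remark 4.2}(2), since now $\ov F_2^*\in\RV_{-\alpha\tau}$).

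The one delicate point, and the main obstacle, is the range of small $x$ where the Potter bounds above are unavailable: here $xt<t_0$, equivalently $x\lesssim\ov F_1(t)^{1/\alpha}$ and $x^{-\alpha}s\ge1$, so $\wh C(x^{-\alpha}s,s^{\tau})$ saturates at $s^{\tau}$ while $\ov F_1(xt)$ merely sits in $[\ov F_1(t_0),1]$. This sub-interval must be split off and handled directly on both sides: on each side its contribution is at most of order $\ov F_1(t)^{1/\alpha}\,\ov F_2^*(t)/\wh C(\ov F_1(t),\ov F_2^*(t))\asymp s^{1/\alpha+\tau-\kappa}/\ell(s)$, so it is negligible exactly when $(\kappa-\tau)\alpha<1$ --- which is precisely what \eqref{cond F} (through this very sub-interval) enforces. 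So the saturation region is simultaneously the place where care is needed and the place from which the inequality $(\kappa-\tau)\alpha<1$ can be read off; the rest of the argument is the routine sandwiching already carried out for \Cref{Theorem MME copula}.
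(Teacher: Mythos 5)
Your proposal is correct and follows essentially the same route as the paper: the paper's proof also reduces the corollary to \Cref{Theorem MES copula} by showing \eqref{cond F} and \eqref{cond E} are equivalent via the sandwiching argument of \Cref{Theorem MME copula}, using the assumption $\ov F_1(t)\sim K_1 t^{-\alpha}$ precisely to extend the Potter-type bounds \eqref{C4} to $0<x\le 1$. You merely make explicit two points the paper leaves implicit --- the concrete choice of the reference tail $\ov F_2^*(t)=\min\{1,K_1^{\tau}t^{-\alpha\tau}\}$ and the separate treatment of the saturation region $xt<t_0$ (where $\wh C$ is capped and \eqref{cond F} itself controls the contribution) --- which is a faithful elaboration rather than a different argument.
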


\begin{proof}
Similar to the proof of Theorem \ref{Theorem MME copula}, the proof here follows easily if we show that conditions \eqref{cond E} and \eqref{cond F} are
equivalent. However, since Potter's bounds hold only for $x\geq 1$
we require the additional assumption that the slowly varying part in the tail of $F_1$ behaves like a constant to obtain a similar bound as \eqref{C4} for
$0<x\leq 1$. Then the result is a direct consequence of \Cref{Theorem MES copula}.
\end{proof}
\begin{Remark} A few observations from the above results are noted below.
\begin{enumerate}[(1)]
\item The result shows that the asymptotic behavior of  the MES is determined only by the dependence structure and the tail behavior of $Z_1$; the tail behavior of
$Z_2$ has no influence. 
Particularly, we see that HRV is not a necessary assumption.
\item An analogous result for the MME does not hold, since a monotone transformation of $Z_2$
will in fact change the MME in contrast to the MES; the tail of $Z_2$ has an influence on the limit behavior of MME.
Further, note that \eqref{cond D} is only an assumption on the upper tail dependence in contrast to \eqref{cond F} where the whole dependence structure plays  a role as well.

\item A result similar to \Cref{Theorem MES copula Corollary} under stronger assumptions has been discussed in \citet[Proposition 2.1]{CaiMusta2017}.
  Inter alia they assume the slowly varying function $\ell$ to be a constant,
 $x\mapsto T(x,1)$ to be continuous and $\tau=1$.

\item The copula examples in  \Cref{example:cop}
only satisfy \eqref{cond D} but not \eqref{cond F} and hence, \Cref{Theorem MES copula Corollary} cannot be applied.
However, such examples are covered in \citet[Section 3.4]{Hua:Joe:2014b} for either
Pareto or Weibull-margins. In these examples the rate of increase of the MES is slower than in the
asymptotic tail dependent case but faster than under condition \eqref{cond F}.
\end{enumerate}
\end{Remark}

The rest of this section is dedicated to construct examples of survival copulas that satisfy the assumptions
 of \Cref{Theorem MES copula}. The examples are created using the additive structure in Model \ref{additive:model} and Bernoulli mixture models as
 discussed in \citet[Section 5]{Hua:Joe:Li:2014} and \citet[Example 2]{DasFasen2017}. First, we propose a result which we apply on the suggested models. Note that the models
 in the examples are not created using copulas apriori but we use the inherent copula structure governing the generation method in order to obtain the examples.

\begin{Proposition} \label{Prop 4.11}
Let   $\bZ \in \MRV({\alpha}, b, \nu, \E)\cap \HRV({\alpha_0}, b_0, \nu_0, \E_0)$   with continuous margins $F_1$, $F_2$, $\E|Z_1|<\infty$, $\lim_{t\to\infty}\ov F_1(t)/\ov F_2^\tau(t)=1 $
for some $\alpha_0/\alpha\geq\tau\geq 1$ and suppose \eqref{cond:dct1} holds. Denote by $$\widehat C(u,v)=u+v-1+F_{\bZ}(F_1^{\leftarrow}(1-u),F_2^{\leftarrow}(1-v))$$ the
survival copula of $\bZ$. Furthermore, let $\bZ^*=(Z_1^*,Z_2^*)\in[0,\infty)^2$ be a random vector with survival copula
  $\widehat C$ and marginal distribution function $F_1$ of $Z_1^*$ and some continuous distribution function $ F_2^*$ of $Z_2^*$. Then with \linebreak
  $a(t)=b_0^{\leftarrow}\{\VaR_{1-1/t}(Z_2)\}/\{t\VaR_{1-1/t}(Z_2)\}\in\RV_{(\alpha_0-\alpha\tau-1)/\alpha\tau}$ we have
\begin{align*}
\lim\limits_{p\downarrow 0} a(1/p) \E(Z_1^*|Z_2^*>\VaR_{1-p}(Z_2^*))   = \int_0^{\infty} \nu_0((x,\infty)\times(1,\infty))\;\mathrm dx
       \end{align*}
where $0<\int_0^{\infty}\nu_0((x,\infty)\times(1,\infty))\;\mathrm dx<\infty$.
\end{Proposition}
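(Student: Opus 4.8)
The plan is to reduce the assertion to \Cref{thm:MES}(b) applied to the original vector $\bZ$, exploiting the fact that the MES is unchanged if the variable in the conditioning event is replaced by a continuous, increasing reparametrisation of itself.

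First I would set $\wt Z_2:=F_2^{*\leftarrow}(F_2(Z_2))$. Since $F_2$ is continuous, $F_2(Z_2)$ is uniformly distributed on $(0,1)$, so $\wt Z_2$ has distribution function $F_2^*$; a direct computation with $\Pr(Z_1>x,\wt Z_2>t)$ using continuity of $F_2$ and $F_2^*$ shows that $(Z_1,\wt Z_2)$ has the same survival copula $\wh C$ as $\bZ$. By Sklar's theorem for survival copulas and the uniqueness granted by continuity of the margins, $(Z_1,\wt Z_2)\eqd(Z_1^*,Z_2^*)=\bZ^*$; in particular $\E|Z_1^*|=\E|Z_1|<\infty$.

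Next I would verify that $\E\{Z_1^*\mid Z_2^*>\VaR_{1-p}(Z_2^*)\}=\E\{Z_1\mid Z_2>\VaR_{1-p}(Z_2)\}=\MES(p)$ for every $p\in(0,1)$. Continuity of $F_2^*$ gives $\VaR_{1-p}(Z_2^*)=F_2^{*\leftarrow}(1-p)$ and $\Pr(Z_2^*>\VaR_{1-p}(Z_2^*))=p$, so the distributional identity of the previous paragraph rewrites the left-hand side as $p^{-1}\E[Z_1\,\1_{\{F_2^{*\leftarrow}(F_2(Z_2))>F_2^{*\leftarrow}(1-p)\}}]$. Since $F_2^*$ is continuous, $F_2^{*\leftarrow}$ is strictly increasing, so the indicator set equals $\{F_2(Z_2)>1-p\}$; and since $F_2$ is continuous, $\{F_2(Z_2)>1-p\}=\{Z_2>F_2^{\leftarrow}(1-p)\}=\{Z_2>\VaR_{1-p}(Z_2)\}$ up to a $\Pr$-null set. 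Identifying these two conditioning events is the one step that needs a little care with generalised inverses, and it is precisely here that continuity of \emph{both} $F_2$ and $F_2^*$ is genuinely used; everything else is elementary.

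Finally I would invoke \Cref{thm:MES}(b) for $\bZ$, whose hypotheses are all in force: $\bZ\in\MRV(\alpha,b,\nu,\E)\cap\HRV(\alpha_0,b_0,\nu_0,\E_0)$ in $[0,\infty)^2$, $\E|Z_1|<\infty$ (which entails $\alpha\geq1$, as in \Cref{prop:add1}), $\alpha_0\geq\alpha$ because $\alpha_0/\alpha\geq\tau\geq1$, and condition \eqref{cond:dct1} is assumed. This gives
\[
\lim_{p\downarrow0}\frac{p\,b_0^{\leftarrow}\{\VaR_{1-p}(Z_2)\}}{\VaR_{1-p}(Z_2)}\,\MES(p)=\int_0^{\infty}\nu_0((x,\infty)\times(1,\infty))\,\mathrm dx,
\]
the integral being strictly positive and finite. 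Recognising the prefactor as $a(1/p)$ and combining with the previous paragraph yields the claimed limit; the regular-variation index of $a$ is then pure bookkeeping as in \Cref{Remark 4.2}(2), since the tail assumption together with $\ov F_1\in\RV_{-\alpha}$ gives $\ov F_2\in\RV_{-\alpha\tau}$, hence $\VaR_{1-1/t}(Z_2)\in\RV_{1/(\alpha\tau)}$, and $b_0^{\leftarrow}\in\RV_{\alpha_0}$ then forces $a\in\RV_{(\alpha_0-\alpha\tau-1)/(\alpha\tau)}$. I expect no genuine obstacle beyond the careful event identification above and the routine check that the hypotheses of \Cref{thm:MES}(b) — in particular $\alpha\geq1$ — really hold.
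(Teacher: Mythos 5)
Your argument is correct, and it reaches the conclusion by a shorter route than the paper. The paper proves \Cref{Prop 4.11} by passing through the copula-level result \Cref{Theorem MES copula}: it first uses \Cref{Lemma:Hua:Joe:Li:1} to extract the upper tail order pair $(\kappa,\tau)=(\alpha_0/\alpha,\tau)$ of $\wh C$ from the assumed HRV, uses \Cref{Lemma 2.5} and \Cref{Lemma 3.6} to get $s^{\kappa-1}\ell(s)\to 0$, notes that \eqref{cond:dct1} yields \eqref{cond E} (with the auxiliary distribution taken to be $F_2$ itself), and then cites \Cref{Theorem MES copula} — whose own proof performs the margin transformation and reconstructs HRV for the transformed vector via \Cref{Lemma 2.9} before invoking \Cref{thm:MES}(b). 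You instead apply the transformation trick directly at the level of the proposition: realizing $\bZ^*$ as $(Z_1,F_2^{*\leftarrow}(F_2(Z_2)))$, observing that this leaves both the survival copula and the MES unchanged, and then applying \Cref{thm:MES}(b) to the original $\bZ$. This bypasses the tail-order machinery (\Cref{Lemma:Hua:Joe:Li:1}, \Cref{Lemma 2.9}, \Cref{Lemma 2.5}, \Cref{Lemma 3.6}) entirely, and it has the advantage of delivering the limit constant $\int_0^\infty\nu_0((x,\infty)\times(1,\infty))\,\mathrm dx$ and the scaling function $a$ exactly as stated, with the original $(b_0,\nu_0,F_2)$, rather than having to trace normalizations back through the reconstruction in \Cref{Lemma 2.9}; your careful treatment of the generalized inverses in the event identification is precisely the point that makes the transformation argument rigorous. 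Two small remarks: you should note, as in the proof of \Cref{Lemma:Hua:Joe:Li:1}(a), that $\ov F_1\in\RV_{-\alpha}$ is not assumed but follows from the MRV of $\bZ$ together with the marginal tail-ratio condition (read, consistently with the rest of the paper and with the stated index of $a$, as $\ov F_2(t)\sim\ov F_1^\tau(t)$, so that $F_1$ carries the heavier tail); only then does $\E|Z_1|<\infty$ force $\alpha\geq 1$, and only then does $\ov F_2\in\RV_{-\alpha\tau}$ give $a\in\RV_{(\alpha_0-\alpha\tau-1)/(\alpha\tau)}$ as you claim.
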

\begin{proof}
Using \Cref{Lemma:Hua:Joe:Li:1} the upper tail order function of $\widehat C$ exists with upper tail order pair $(\kappa,\tau)=(\alpha_0/\alpha,\tau)$, i.e.,
$\wh C(s,s^\tau)\sim s^{\kappa}\ell(s)$ as $s\downarrow 0$.
Further, $\lim_{s\downarrow 0}s^{\kappa-1}\ell(s)=0$ due to \Cref{Lemma 2.5}  and \Cref{Lemma 3.6}. Moreover, \eqref{cond:dct1} proved in \Cref{theorem:5.3} implies \eqref{cond E}.
Hence, the result is a consequence of   \Cref{Theorem MES copula}.
\end{proof}

\begin{Example}
Let  $\bZ=\bY+\bV$ be as in Model \ref{additive:model} with continuous margins for $Y_{1},Y_{2},V_{1}, V_{2}$ and suppose $\ov F_{Y_2}\in\RV_{-\alpha^*}$ with
  $\alpha+\alpha^*>\alpha_0$. Further, let $\bZ^*=(\bZ^*_1,\bZ^*_2)$ be defined as in \Cref{Prop 4.11}.
  Then there exists a function $a(t)\in\RV_{(\alpha_0-\alpha^*-1)/\alpha^*}$  and a constant $K\in(0,\infty)$ such that
\begin{align*}
\lim\limits_{p\downarrow 0} a(1/p) \E(Z_1^*|Z_2^*>\VaR_{1-p}(Z_2^*))   = K.
\end{align*}
\begin{proof}
The conclusion is easy to see since by \Cref{theorem:3.1} and \Cref{theorem:5.3} we already know that  \linebreak $\bZ \in \MRV({\alpha}, b, \nu, \E)\cap \HRV({\alpha_0}, b_0, \nu_0, \E_0)$
and \eqref{cond:dct1} holds. The rest is a consequence of \Cref{Prop 4.11}.
\end{proof}

 Clearly, analogous results hold if $\widehat C$ is the copula of the other examples of vectors defined in \Cref{theorem:5.3}.

\end{Example}

\begin{Example}
This model is motivated by the Bernoulli mixture type models discussed in \citet[Section 5]{Hua:Joe:Li:2014} and \citet[Example 2]{DasFasen2017}.
Suppose that $X_1,X_2,X_3$ are independent  Pareto random variables with parameters $\alpha$, $\alpha_0$ and $\gamma$, respectively, where $1<\alpha<\alpha_0<\gamma$,
$\alpha+\gamma>\alpha_0$.
Let $B$ be a Bernoulli$(q)$ random variable  with $q \in (0,1)$, $\bR=(R_1,R_2)$ be a random vector with each margin defined on $[1,\infty)$  and
$\E\|\bR\|^{\alpha_0}<\infty$. We also assume  $X_1,X_2,X_3,B,\bR$ are independent of each other. Now define
$$\bZ=(Z_1, Z_2) = B(X_1,X_3) + (1-B)(R_1X_2,R_2X_2)$$ 
and let $\bZ^*=(\bZ^*_1,\bZ^*_2)$ be defined as in \Cref{Prop 4.11}.
Then
\begin{align*}
\lim\limits_{p\downarrow 0} p^{\frac{1}{\alpha_0}} \E(Z_1^*|Z_2^*>\VaR_{1-p}(Z_2^*))   = (1-q)\int_0^\infty\E(\min(x^{-1}R_1,R_2))^{\alpha_0}\, \mathrm dx.
       \end{align*}
\end{Example}
\begin{proof}
We only need to verify that $\bZ \in \MRV({\alpha}, b, \nu, \E)\cap \HRV({\alpha_0}, b_0, \nu_0, \E_0)$
and that \eqref{cond:dct1} is satisfied since the rest is a consequence of \Cref{Prop 4.11}. However, it is easy to check (cf. the similar models
 in \cite[Section 5]{Hua:Joe:Li:2014} and \cite[Example 2]{DasFasen2017})
that $\bZ \in \MRV({\alpha}, b, \nu, \E)\cap \HRV({\alpha_0}, b_0, \nu_0, \E_0)$ with $b(t)=t^{1/\alpha}$, $\nu(dx ,dy)=q\alpha x^{-\alpha-1}\, \mathrm dx\times \varepsilon_0(dy)$,
$b_0(t)=t^{1/\alpha_0}$
and $\nu_0((x,\infty)\times(y,\infty))=(1-q)\E(\min(x^{-1}R_1,y^{-1}R_2)^{\alpha_0})$ for $x,y>0$. Moreover, for $t,x\geq 1$, we have the inequality
\begin{eqnarray*}
    (1-q)t^{-\alpha_0}x^{-\alpha_0}\E(\min(R_1,R_2)^{\alpha_0})\leq  \Pr(Z_1>xt,Z_2>t)\leq (q+(1-q)\E(R_1^{\alpha_0}))t^{-\alpha_0}x^{-\alpha}
\end{eqnarray*}
and for $0<x\leq 1$,
\begin{eqnarray*}
    \Pr(Z_1>xt,Z_2>t)\leq \Pr(Z_2>t)\leq (q+(1-q)\E(R_2^{\alpha_0}))t^{-\alpha_0}.
\end{eqnarray*}
Thus, condition \eqref{cond:dct1} is also satisfied.
\end{proof}

\section{Conclusion}\label{sec:concl}
Our goal in this paper was to investigate certain conditional excess measures for bivariate models with asymptotic tail independence and exhibiting heavy tails
in the margins. We have been able to find asymptotic rates of convergence for the measures MES, MME as well as $\MES^{+}, \MES^{\min}, \MES^{\max}$ for a variety of
copula models, additive models and Bernoulli mixture models. We particularly note that the limit behavior of MES only depends on the tail of the survival copula and
the tail behavior of the variable which is not-conditioned (denoted by $Z_{1}$ in most of our examples). The asymptotic behavior of MME involves further information
on the copula as well as the tail of the conditioning variable ($Z_{2}$ in our examples). In addition we constructed a large class of \emph{hidden regularly varying}
 models useful in the  context of systemic risks which were not known or used hitherto up to our knowledge.
Interesting extensions of our results to multivariate structures beyond $d=2$
(see \citet{hoffmann_etal:2016,hoffmann:2017}) as well as graphical and network structures (see \citet{kley:kluppelberg:reinert:2016,kley:kluppelberg:reinert:2017})
are possible and are topics of future research. 


\section*{References}
\bibliographystyle{plainnat}
\bibliography{bibeshrv}

\begin{thebibliography}{53}
\providecommand{\natexlab}[1]{#1}
\providecommand{\url}[1]{\texttt{#1}}
\expandafter\ifx\csname urlstyle\endcsname\relax
  \providecommand{\doi}[1]{doi: #1}\else
  \providecommand{\doi}{doi: \begingroup \urlstyle{rm}\Url}\fi

\bibitem[Acharya et~al.(2010)Acharya, Pedersen, Philippon, and
  Richardson]{acharya:petersen:philippon:richardson:2010}
V.~V. Acharya, L.H. Pedersen, T.~Philippon, and M.P. Richardson.
\newblock Measuring systemic risk.
\newblock \emph{AFA 2011 Denver Meetings Paper}, May 2010.
\newblock \url{http://ssrn.com/abstract=1573171}.

\bibitem[Anderson and Meerschaert(1998)]{anderson:meerschaert:1998}
P.L. Anderson and M.M. Meerschaert.
\newblock Modeling river flows with heavy tails.
\newblock \emph{Water Resour. Res.}, 34\penalty0 (9):\penalty0 2271--2280,
  1998.

\bibitem[Ballerini(1994)]{Ballerini}
R.~Ballerini.
\newblock Archimedean copulas, exchangeability, and max-stability.
\newblock \emph{J. Appl. Probab.}, 31\penalty0 (2):\penalty0 383--390, 1994.

\bibitem[Barg\`es et~al.(2009)Barg\`es, Cossette, and
  Marceau]{Barges:Cossette:Marceau}
M.~Barg\`es, H.~Cossette, and E.~Marceau.
\newblock T{V}a{R}-based capital allocation with copulas.
\newblock \emph{Insurance Math. Econom.}, 45\penalty0 (3):\penalty0 348--361,
  2009.

\bibitem[Bingham et~al.(1989)Bingham, Goldie, and
  Teugels]{bingham:goldie:teugels:1989}
N.~H. Bingham, C.~M. Goldie, and J.~L. Teugels.
\newblock \emph{Regular Variation}.
\newblock Cambridge University Press, Cambridge, 1989.

\bibitem[Brownlees and Engle(2015)]{brownlees:engle:2017}
C.~Brownlees and R.~Engle.
\newblock Srisk: A conditional capital shortfall index for systemic risk
  management.
\newblock \emph{Rev. Financial Stud.}, 30\penalty0 (1):\penalty0 48--79, 2015.

\bibitem[Cai and Li(2005)]{Cai:Li:2005}
J.~Cai and H.~Li.
\newblock Conditional tail expectations for multivariate phase-type
  distributions.
\newblock \emph{J. Appl. Probab.}, 42\penalty0 (3):\penalty0 810--825, 2005.

\bibitem[Cai and Musta(2017)]{CaiMusta2017}
J.-J. Cai and E.~Musta.
\newblock Estimation of the marginal expected shortfall under asymptotic
  independence.
\newblock \emph{Preprint}, 2017.
\newblock \url{https://arxiv.org/abs/1709.04285}.

\bibitem[Cai et~al.(2015)Cai, Einmahl, de~Haan, and
  Zhou]{cai:einmahl:dehaan:zhou:2015}
J.-J. Cai, J.H.J. Einmahl, L.~de~Haan, and C.~Zhou.
\newblock Estimation of the marginal expected shortfall: the mean when a
  related variable is extreme.
\newblock \emph{J. Roy. Statist. Soc. Ser. B}, 77\penalty0 (2):\penalty0
  417--442, 2015.

\bibitem[Cap{\'e}ra{\`a} et~al.(2000)Cap{\'e}ra{\`a}, Foug{\`e}res, and
  Genest]{Capera:Fougeres:Genest}
P.~Cap{\'e}ra{\`a}, A.-L. Foug{\`e}res, and C.~Genest.
\newblock Bivariate distributions with given extreme value attractor.
\newblock \emph{J. Multivariate Anal.}, 72\penalty0 (1):\penalty0 30--49, 2000.

\bibitem[Charpentier and Segers(2009)]{Charpentier:Segers}
A.~Charpentier and J.~Segers.
\newblock Tails of multivariate {A}rchimedean copulas.
\newblock \emph{J. Multivariate Anal.}, 100\penalty0 (7):\penalty0 1521--1537,
  2009.

\bibitem[Chiragiev and Landsman(2007)]{Chiragiev:Landsman:2007}
A.~Chiragiev and Z.~Landsman.
\newblock Multivariate {P}areto portfolios: {TCE}-based capital allocation and
  divided differences.
\newblock \emph{Scand. Actuar. J.}, 2007\penalty0 (4):\penalty0 261--280, 2007.

\bibitem[Cousin and Di~Bernardino(2014)]{Cousin:Bernardino:2013}
A.~Cousin and E.~Di~Bernardino.
\newblock On multivariate extensions of conditional-tail-expectation.
\newblock \emph{Insurance Math. Econom.}, 55\penalty0 (C):\penalty0 272--282,
  2014.

\bibitem[Crovella et~al.(1999)Crovella, Bestavros, and
  Taqqu]{crovella:bestavros:taqqu:1998}
M.~Crovella, A.~Bestavros, and M.S. Taqqu.
\newblock Heavy-tailed probability distributions in the world wide web.
\newblock In M.S.~Taqqu R.~Adler, R.~Feldman, editor, \emph{A Practical Guide
  to Heavy Tails: Statistical Techniques for Analysing Heavy Tailed
  Distributions}. Birkh{\"a}user, Boston, 1999.

\bibitem[Das and Fasen-Hartmann(2018)]{DasFasen2017}
B.~Das and V.~Fasen-Hartmann.
\newblock Risk contagion under regular variation and asymptotic tail
  independence.
\newblock \emph{J. Multivariate Anal.}, 165:\penalty0 194--215, 2018.

\bibitem[Das and Resnick(2015)]{das:resnick:2015}
B.~Das and S.I. Resnick.
\newblock Models with hidden regular variation: generation and detection.
\newblock \emph{Stochastic Systems}, 5\penalty0 (2):\penalty0 195--238, 2015.

\bibitem[Das et~al.(2013)Das, Mitra, and Resnick]{das:mitra:resnick:2013}
B.~Das, A.~Mitra, and S.I. Resnick.
\newblock Living on the multidimensional edge: seeking hidden risks using
  regular variation.
\newblock \emph{Adv. in Appl. Probab.}, 45\penalty0 (1):\penalty0 139--163,
  2013.

\bibitem[Embrechts et~al.(1997)Embrechts, Kl\"{u}ppelberg, and
  Mikosch]{embrechts:kluppelberg:mikosch:1997}
P.~Embrechts, C.~Kl\"{u}ppelberg, and T.~Mikosch.
\newblock \emph{Modelling Extreme Events for Insurance and Finance}.
\newblock Springer-Verlag, Berlin, 1997.

\bibitem[Hoffmann(2017)]{hoffmann:2017}
H.~Hoffmann.
\newblock Multivariate conditional risk measures.
\newblock PhD Thesis, 2017.

\bibitem[Hoffmann et~al.(2016)Hoffmann, Meyer-Brandis, and
  Svindland]{hoffmann_etal:2016}
H.~Hoffmann, T.~Meyer-Brandis, and G.~Svindland.
\newblock Risk-consistent conditional systemic risk measures.
\newblock \emph{Stochastic Process. Appl.}, 126\penalty0 (7):\penalty0
  2014--2037, 2016.

\bibitem[Hua and Joe(2011{\natexlab{a}})]{Hua:Joe:2011}
L.~Hua and H.~Joe.
\newblock Tail order and intermediate tail dependence of multivariate copulas.
\newblock \emph{J. Multivariate Anal.}, 102\penalty0 (10):\penalty0 1454--1471,
  2011{\natexlab{a}}.

\bibitem[Hua and Joe(2011{\natexlab{b}})]{Hua:Joe:2011b}
L.~Hua and H.~Joe.
\newblock Second order regular variation and conditional tail expectation of
  multiple risks.
\newblock \emph{Insurance Math. Econom.}, 49\penalty0 (3):\penalty0 537--546,
  2011{\natexlab{b}}.

\bibitem[Hua and Joe(2013)]{Hua:Joe:2013b}
L.~Hua and H.~Joe.
\newblock Intermediate tail dependence: a review and some new results.
\newblock In \emph{Stochastic orders in reliability and risk}, volume 208,
  pages 291--311. Springer-Verlag, New York, 2013.

\bibitem[Hua and Joe(2014)]{Hua:Joe:2014b}
L.~Hua and H.~Joe.
\newblock Strength of tail dependence based on conditional tail expectation.
\newblock \emph{J. Multivariate Anal.}, 123:\penalty0 143--159, 2014.

\bibitem[Hua et~al.(2014)Hua, Joe, and Li]{Hua:Joe:Li:2014}
L.~Hua, H.~Joe, and H.~Li.
\newblock Relations between hidden regular variation and the tail order of
  copulas.
\newblock \emph{J. Appl. Probab.}, 51\penalty0 (1):\penalty0 37--57, 2014.

\bibitem[Hult and Lindskog(2006)]{hult:lindskog:2006a}
H.~Hult and F.~Lindskog.
\newblock Regular variation for measures on metric spaces.
\newblock \emph{Publ. Inst. Math. (Beograd) (N.S.)}, 80(94):\penalty0 121--140,
  2006.

\bibitem[Jaworski(2006)]{Jaworski2006}
P.~Jaworski.
\newblock On uniform tail expansions of multivariate copulas and wide
  convergence of measures.
\newblock \emph{Appl. Math. (Warsaw)}, 33\penalty0 (2):\penalty0 159--184,
  2006.

\bibitem[Jessen and Mikosch(2006)]{jessen:mikosch}
A.H. Jessen and T.~Mikosch.
\newblock Regularly varying functions.
\newblock \emph{Publ. Inst. Math. (Beograd) (N.S.)}, 80(94):\penalty0 171--192,
  2006.

\bibitem[Joe and Li(2011)]{Joe:Li:2011}
H.~Joe and H.~Li.
\newblock Tail risk of multivariate regular variation.
\newblock \emph{Methodol. Comput. Appl. Probab.}, 13\penalty0 (4):\penalty0
  671--693, 2011.

\bibitem[Kley et~al.(2016)Kley, Kl{\"u}ppelberg, and
  Reinert]{kley:kluppelberg:reinert:2016}
O.~Kley, C.~Kl{\"u}ppelberg, and G.~Reinert.
\newblock Risk in a large claims insurance market with bipartite graph
  structure.
\newblock \emph{Operations Research}, 64\penalty0 (5):\penalty0 1159--1176,
  2016.

\bibitem[Kley et~al.(2017)Kley, Kl\"uppelberg, and
  Reinert]{kley:kluppelberg:reinert:2017}
O.~Kley, C.~Kl\"uppelberg, and G.~Reinert.
\newblock Conditional risk measures in a bipartite market structure.
\newblock \emph{Scand. Actuar. J. (forthcoming)}, 2017.
\newblock \url{https://arxiv.org/abs/1510.00616}.

\bibitem[Kulik and Soulier(2015)]{KulikSoulier2015}
R.~Kulik and P.~Soulier.
\newblock Heavy tailed time series with extremal independence.
\newblock \emph{Extremes}, 18\penalty0 (2):\penalty0 273--299, 2015.

\bibitem[Landsman and Valdez(2003)]{Landsman:Valdez}
Z.~Landsman and E.~Valdez.
\newblock Tail conditional expectations for elliptical distributions.
\newblock \emph{N. Am. Actuar. J.}, 7\penalty0 (4):\penalty0 55--71, 2003.

\bibitem[Ledford and Tawn(1997)]{Ledford:Tawn}
A.W. Ledford and J.A. Tawn.
\newblock Modelling dependence within joint tail regions.
\newblock \emph{J. Roy. Statist. Soc. Ser. B}, 59\penalty0 (2):\penalty0
  475--499, 1997.

\bibitem[Li(2017)]{li:2016}
H.~Li.
\newblock Operator tail dependence of copulas.
\newblock \emph{Methodol. Comput. Appl. Probab.}, 2017:\penalty0 1--15, 2017.

\bibitem[Li and Hua(2015)]{Li:Hua:2015}
H.~Li and L.~Hua.
\newblock Higher order tail densities of copulas and hidden regular variation.
\newblock \emph{J. Multivariate Anal.}, 138:\penalty0 143--155, 2015.

\bibitem[Lindskog et~al.(2014)Lindskog, Resnick, and
  Roy]{lindskog:resnick:roy:2014}
F.~Lindskog, S.I. Resnick, and J.~Roy.
\newblock Regularly varying measures on metric spaces: hidden regular variation
  and hidden jumps.
\newblock \emph{Probab. Surveys}, 11:\penalty0 270--314, 2014.

\bibitem[Maulik and Resnick(2005)]{maulik:resnick:2005}
K.~Maulik and S.I. Resnick.
\newblock Characterizations and examples of hidden regular variation.
\newblock \emph{Extremes}, 7\penalty0 (1):\penalty0 31--67, 2005.

\bibitem[McNeil et~al.(2005)McNeil, Frey, and Embrechts]{McNeil:Frey:Embrechts}
A.J. McNeil, R.~Frey, and P.~Embrechts.
\newblock \emph{Quantitative Risk Management}.
\newblock Princeton University Press, Princeton, 2005.

\bibitem[Nelsen(2006)]{Nelsen}
R.B. Nelsen.
\newblock \emph{An Introduction to Copulas}.
\newblock Springer Series in Statistics. Springer-Verlag, New York, second
  edition, 2006.

\bibitem[Poon et~al.(2004)Poon, Rockinger, and Tawn]{Poon:Rockinger:Tawn}
S.-H. Poon, M.~Rockinger, and J.~Tawn.
\newblock Extreme value dependence in financial markets: Diagnostics, models,
  and financial implications.
\newblock \emph{Rev. Financial Stud.}, 17\penalty0 (2):\penalty0 581--610,
  2004.

\bibitem[Reiss(1989)]{reiss:1989}
R.-D. Reiss.
\newblock \emph{Approximate Distributions of Order Statistics}.
\newblock Springer-Verlag, New York, 1989.

\bibitem[Resnick(2002)]{resnick:2002}
S.I. Resnick.
\newblock Hidden regular variation, second order regular variation and
  asymptotic independence.
\newblock \emph{Extremes}, 5\penalty0 (4):\penalty0 303--336, 2002.

\bibitem[Resnick(2007)]{Resnick:2007}
S.I. Resnick.
\newblock \emph{Heavy Tail Phenomena: Probabilistic and Statistical Modeling}.
\newblock Springer-Verlag, New York, 2007.

\bibitem[Resnick(2008)]{resnickbook:2008}
S.I. Resnick.
\newblock \emph{Extreme Values, Regular Variation and Point Processes}.
\newblock Springer Series in Operations Research and Financial Engineering.
  Springer, New York, 2008.
\newblock Reprint of the 1987 original.

\bibitem[Salmon(2009)]{salmon:2009}
F.~Salmon.
\newblock Recipe for disaster: the formula that killed {W}all {S}treet.
\newblock February 23, Wired Magazine, 2009.

\bibitem[Sibuya(1960)]{sibuya:1960}
M.~Sibuya.
\newblock Bivariate extreme statistics.
\newblock \emph{Ann. Inst. Stat. Math.}, 11:\penalty0 195--210, 1960.

\bibitem[Smith(2003)]{smith:2003}
R.L. Smith.
\newblock Statistics of extremes, with applications in environment, insurance
  and finance.
\newblock In B.~Finkenstadt and H.~Rootz\'en, editors, \emph{Extreme Values in
  Finance, Telecommunications, and the Environment}, pages 1--78. Chapman-Hall,
  London, 2003.

\bibitem[Tankov(2016)]{tankov:2016a}
P.~Tankov.
\newblock Tails of weakly dependent random vectors.
\newblock \emph{J. Multivariate Anal.}, 145:\penalty0 73--86, 2016.

\bibitem[Wadsworth and Tawn(2013)]{wadsworth:tawn:2013}
J.L. Wadsworth and J.A. Tawn.
\newblock A new representation for multivariate tail probabilities.
\newblock \emph{Bernoulli}, 19\penalty0 (5B):\penalty0 2689--2714, 2013.

\bibitem[Weller and Cooley(2014)]{weller:cooley:2014}
G.B. Weller and D.~Cooley.
\newblock A sum characterization of hidden regular variation with likelihood
  inference via expectation-maximization.
\newblock \emph{Biometrika}, 101\penalty0 (1):\penalty0 17--36, 2014.

\bibitem[Zhou(2010)]{zhou:2010}
C.~Zhou.
\newblock Are banks too big to fail? measuring systemic importance of financial
  institutions.
\newblock \emph{Int. J. Cent. Bank.}, 2010.
\newblock URL \url{http://www.ijcb.org/journal/ijcb10q4a10.pdf}.

\bibitem[Zhu and Li(2012)]{Zhu:Li:2012}
L.~Zhu and H.~Li.
\newblock Asymptotic analysis of multivariate tail conditional expectations.
\newblock \emph{N. Am. Actuar. J.}, 16\penalty0 (3):\penalty0 350--363, 2012.

\end{thebibliography}

\end{document}